\newtheorem{theorem}{Theorem}[section]
\newtheorem{lemma}[theorem]{Lemma}
\newtheorem{proposition}[theorem]{Proposition}
\theoremstyle{definition}
\newtheorem{example}[theorem]{Example}
\theoremstyle{remark}
\newtheorem{remark}[theorem]{Remark}
\def\cal{\mathcal}
\def\citep{\cite}
\def\p{{\partial}}
\def\maC{\mathcal C}
\def\cki{\iota}
\numberwithin{equation}{section}
\begin{document}

\title{Graphon Mean Field Games and the GMFG Equations}

%    Information for first author
\author{Peter E. Caines}
\address{Department
of Electrical and Computer Engineering, McGill University, Montreal, QC, Canada}
\email{peterc@cim.mcgill.ca}
\thanks{This work was supported by NSERC and AFOSR (P. E. Caines) and NSERC (M. Huang).}

%    Information for second author

\author{Minyi Huang}
%    Address of record for the research reported here
\address{School  of Mathematics and Statistics, Carleton University,
Ottawa, ON, Canada}
%    Current address
%\curraddr{Department of Mathematics and Statistics,
%Case Western Reserve University, Cleveland, Ohio 43403}
\email{mhuang@math.carleton.ca}
%    \thanks will become a 1st page footnote.
%\thanks{Supported by NSERC}

%    General info
\subjclass[2020]{49N80, 91A16, 91A43, 93E20}

\date{Aug 24, 2020; revised Apr 12, 2021, Jun 15, 2021, Dec 28, 2021.}

%\dedicatory{This paper is dedicated to our advisors.}

\keywords{Mean field games, networks, graphons}

\begin{abstract}
 %Very large networks linking dynamical agents are now ubiquitous and the  %need to analyse, design and control them is evident.
 The emergence of the graphon theory of large networks and their infinite limits has enabled the formulation of a theory of the centralized control of dynamical systems distributed on asymptotically infinite networks
\cite{ShuangPeterCDC17,ShuangPeterCDC18}.
 Furthermore, the study of the decentralized  control of such systems was initiated
 in \cite{CainesHuangCDC2018,CH19},   where  Graphon Mean Field Games (GMFG)  and the GMFG equations were formulated for the analysis of non-cooperative dynamic games  on  unbounded networks. In that work, existence and uniqueness results were  introduced for the GMFG equations, together with an
$\epsilon$-Nash theory for GMFG systems which relates infinite population equilibria on infinite networks to finite population equilibria on finite networks. Those results are rigorously established in this paper.
\end{abstract}

\maketitle

\section{Introduction}

One response  to the problems arising in the analysis of systems of great complexity is to pass to an appropriately formulated infinite limit. This approach has a distinguished history since it is the conceptual principle underlying the celebrated Boltzmann Equation of statistical mechanics and that of the fundamental Navier-Stokes equation of fluid mechanics (see e.g. \citep{pauli2000thermodynamics, herron2008partial,gallagher2019newtonnavierstokes,gallagher2013newton}). Similarly  the Fokker-Planck-Kolmogorov (FPK) equation  for the macroscopic flow of probabilities \citep{doob1953stochastic, karatzas2012brownian} is used to describe a vast range of phenomena which at a micro or mezzo level are modelled via the random interactions of discrete entities.

 The work in this paper is formulated within two recent theories which were  developed with an analogous motive to that  above, namely
 Mean Field Game (MFG) theory for the analysis of equilibria in very large populations of non-cooperative agents (see \citep{HMC06, HCM07, LL06a, LL06b, CarmonaDelarueBook2018a, CarmonaDelarueBook2018b, CHM17}), and the graphon theory of the infinite limits of graphs and networks  (see
\cite{lovasz2006limits,borgs2006graph,borgs2008convergent,borgs2012convergent, lovasz2012large}).

A mathematically rigorous study of MFG systems with state values in finite graphs is provided in \citep{G15}, and  MFG systems where the agent subsystems are defined at the nodes (vertices) of finite random Erd\"os-R\'enyi graphs are treated in \citep{delarue2017mean}. The system behaviour in \cite{G15} is  subject to a fixed underlying network. The random graphs in \cite{delarue2017mean} have unbounded growth but do not create spatial distinction of the agents due to  symmetry properties of the interactions. However, graphon  theory gives a rigorous formulation of the notion of limits for infinite sequences of networks of increasing size,  and the first application of graphon theory in dynamics appears to be in the  work of Medvedev
 \citep{medvedev2014nonlineardense,medvedev2014nonlinear}, and
  Kaliuzhnyi-Verbovetskyi and Medvedev \citep{kaliuzhnyi2017semilinear}.
  The law of large numbers for graphon mean field systems is proven in \cite{BCW20} as a generalization of results for standard interacting particle systems.
Furthermore, the work in \cite{QT15} derives the McKean-Vlasov limit for a network of agents described by delay stochastic differential equations  that are coupled by randomly generated connections.

The first applications of graphon theory in systems  and control theory are those in \citep{ShuangPeterNetSci17, ShuangPeterSIAM17, ShuangPeterCDC17, ShuangPeterCDC18,GC20} which treat the centralized and distributed control of arbitrarily large networks of linear dynamical control systems for which a direct solution would be  intractable. Approximate control is achieved by solving control problems on the infinite  limit graphon and then applying control laws derived from those solutions on the finite network of interest. The analogy with the strategies for finding feedback laws resulting in  $\epsilon$-Nash equilibria in the MFG framework is obvious. In this connection we note that  work on static game theoretic equilibria for infinite populations on graphons  was reported in {\cite{PariseOzdagler2018}}.

 A natural framework for the formulation of game theoretic  problems
  involving large populations of  agents distributed over  large networks is given by Mean Field Game theory defined on graphons. The resulting basic idea and the associated fundamental equations for what we term  Graphon  Mean Field Game  (GMFG) systems  and the GMFG equations are the subject of
  the current paper and its predecessors \cite{CainesHuangCDC2018,CH19}. The GMFG  equations are of significant  generality since they permit the study, in the limit, of both dense and sparse, infinite networks of non-cooperative
 dynamical agents. Moreover the classical  MFG equations are retrieved
 as a special case. We observe that an early analysis of linear quadratic (LQ) models in mean field games on networks with non-uniform edge weightings can be found in  \cite{HCM10}.
However, in that work there was no application of graphon theory, and in the uniform system parameter case there is one agent per node and a single mean field,  whereas in the present work there is a subpopulation with its own mean field at each node.

The basic $\epsilon$-Nash equilibrium result in MFG theory and its corresponding  form in GMFG theory are vital for the application of MFG derived control laws. This is the case since the solution of the MFG and GMFG  equations is necessarily simpler than the effectively intractable task of finding the solution to  the game problems for the large finite population systems. Indeed, this was one of the original motives for the creation of MFG theory and it is a basic feature of  graphon systems control theory \cite{ShuangPeterNetSci17}.

The paper is organized as follows.
Section \ref{sec:graphon} provides preliminary materials on graphons. Section \ref{sec:gmfg} introduces the GMFG  equation system  and proves the existence and uniqueness of a solution. For the decentralized strategies determined by the GMFG equations, an $\epsilon$-Nash equilibrium theorem is proven in Section \ref{sec:enash}.  The GMFG equations are illustrated by an LQ example in Section \ref{sec:lq}.

%\begin{comment}
\begin{longtable}{rl}
\caption{Notation}\\
\toprule
$G_k$  & the $k$-th graph in a sequence of graphs\\
$g^k$  & weights of $G_k$ as a step function   \\
$M_k$  & the number of nodes in $G_k$ \\
${\mathcal C}_i$  & the cluster of agents residing at node $i$ of $G_k$ \\
${\mathcal C}(i)$ & the cluster that agent $i$ belongs to\\
$I_i^*$, $I^*(i)$ & the midpoint of an interval of length $1/M_k$\\
$g$ & the graphon function \\
$\mu_\alpha(t)$ &  the  local mean field generated by agents at vertex $\alpha\in [0,1]$\\
$\mu_G(t)$   & an ensemble of local mean fields $(\mu_\alpha(t))_{0\le \alpha \le 1}$ \\
${\mathcal M}_{[0,T]}  $  & a class of  $\mu_G(\cdot)$ satisfying a H\"older continuity condition\\
$C_T$ & the space of continuous functions on $[0,T]$\\
${\mathcal F}_T$ & $\sigma$-algebra induced by cylindrical sets in $C_T$
   \\
$(C_T, {\mathcal F}_T, m_\alpha)$ & probability measure space for the path space at vertex $\alpha $\\
$ {\bf M}_T$ & the set of probability measures on $(C_T, {\cal F}_T)$  \\
 $ D_T $&  Wasserstein metric on   $ {\bf M}_T $  \\
${\bf M}_T^G$ &  the product space $\prod_{\alpha\in [0,1]} {\bf M}_T$  \\
${\bf M}_T^{G0}$, ${\bf M}_T^{G1}$  &   subsets of ${\bf M}_T^G$   \\
$m_G$ & an ensemble of measures $(m_\alpha)_{0\le \alpha \le 1} \in {\bf M}_T^G$\\
${\rm Proj}_\alpha (m_G) $  & the component  $ m_\alpha $ at vertex $\alpha$\\
${\rm Marg}_t(m_\alpha) $ & the time $t$-marginal of $m_\alpha$\\
$x_\alpha$  & the state of a generic agent at
                             vertex $\alpha \in [0,1]$\\
$w_\alpha $ & a generic standard Brownian motion at vertex $\alpha$
\\
$\varphi(t,x_\alpha|\mu_G(\cdot);g_\alpha)$ & the best response at vertex $\alpha$  with $\mu_G(\cdot)$ given by the GMFG system;  \\
 & abbreviated as $\varphi(t,x_\alpha,g_\alpha)$ or $\varphi_\alpha$ \\
$\phi(t,x_\alpha|\mu_G(\cdot);g_\alpha)$ & the best response at vertex $\alpha$  with respect to an arbitrary $\mu_G(\cdot)$;\\
 & abbreviated as $\phi_\alpha(t,x_\alpha|\mu_G(\cdot))$ or $\phi_\alpha$\\
\bottomrule
\end{longtable}

%\end{comment}

%\makebox[2cm][r]{$ {\bf X}_t$:}\makebox[0.5cm]{}{$nd^*_1$ dimensional vector}\\
%\makebox[2cm][r]{$ W_t $:}\makebox[0.5cm]{}{$n_1$ dimensional noise vector}\\
%\makebox[2cm][r]{$Z_t  $:}\makebox[0.5cm]{}{$n$ dimensional vector obtained %by re-ordering entries in $X_t$}\\

\section{The Concept of a Graphon}
\label{sec:graphon}

The basic idea of the theory of graphons is that the edge structure of each finite cardinality network is represented by a step function density on the unit square in $\mathbb{R}^{2}$ on  which the  so-called cut norm and  cut metrics  are defined. The set of finite graphs endowed with the cut metric  then gives rise to a metric space, and the completion of this space is the space of  graphons. Let $\mathbf{G_0^{sp}}$ denote the linear space  of bounded symmetric  Lebesgue measurable functions $W: [0,1]^2 \rightarrow \mathbb{R}$, which are called kernels.
   The space $\mathbf{G^{sp}}$  of graphons is a subset of $\mathbf{G_0^{sp}}$ and consists of kernels  $W: [0,1]^2 \rightarrow [0,1]$ which
can be interpreted as weighted graphs on the vertex set $[0,1]$.
We note that functions $W\in \mathbf{G^{sp}}$ taking values in finite sets satisfy  this definition and so, in particular, graphons are  defined on
 finite graphs.

The cut norm of a kernel $W\in \mathbf{G_0^{sp}}$ then has the expression:
 \begin{equation}
 \| W \|_{\Box}=\sup_{M,T\subset [0,1]}\Big|\int_{M\times T}W(x,y)dxdy\Big| \nonumber
 \end{equation}
 with the supremum taking over all measurable subsets $M$ and $T$ of $[0,1]$.
 Denote the set of measure preserving bijections $[0, 1] \rightarrow [0, 1]$ by $S_{[0,1]}$.  The \emph{cut metric} between two graphons $V$ and $W$ is then given by
   $     \delta_{\Box}(W, V)=\inf_{\phi\in S_{[0,1]}}\|W^{\phi} -V\|_{\Box}$,
where $W^{\phi}(x,y) \coloneqq W(\phi(x),\phi(y))$ and any pair of graphons at zero distance  are identified  with each other.
The space  $(\mathbf{G^{sp}}, \delta_{\Box})$ is compact in the topology given by the cut metric \cite{lovasz2012large}. Furthermore, sets in $(\mathbf{G^{sp}}, \delta_{\Box})$
which are compact with respect to the $L^2$ metric are compact with respect to the cut metric.
 Since $\mathbf{G^{sp}}$ is compact in the cut metric all sequences of graphons have subsequential limits.

In this paper, we start with the modeling of the game of a finite population based on a finite graph.
Specifically, the population resides on a weighted finite graph $G_k$ with a set of nodes (or vertices) ${\cal V}_k=\{1, \ldots, M_k\}$ and weights $g^k_{ij}\in [0,1]$ for $(i,j)\in {\cal V}_k\times {\cal V}_k$, where a
 value   $g_{ii}^k$ is assigned in the case $i=j$.
We call $g_i^k \coloneqq(g_{i1}^k,\ldots,  g_{iM_k}^k)$ a section of $g^k$ at $i$.
Each node $l$ is occupied by a set of agents
which is called a cluster of the population and hence the number of clusters is $M_k$.
We  list the clusters as
$\maC_1, \ldots, \maC_{M_k}$.
Without loss of generality, we assume the $l$th cluster occupies node $l$.
Let $\maC(i)$ denote  the cluster that  agent $i$ belongs to. So $i\in \maC(i)$.
Our further analysis in the paper is based on the convergence of $g^k$ to a graphon limit $g$. We may naturally identify $(g^k_{ij})_{1\le i,j\le M_k}$ with a graphon $g^k(\alpha, \beta)$ as a step function defined on
$[0,1]\times[0,1]$ (see \cite{lovasz2012large}).
However,  convergence in the cut norm or  the cut metric is inadequate for the analysis in this paper as it does not capture sufficiently strong sectional information of the difference $g^k-g$.
We will adopt a different convergence notion strengthening the sectional  requirement as in assumption (H11) below.    To indicate its arguments, we may write $g(\alpha, \beta)$ or alternatively $g_{\alpha, \beta}$. We define the section of $g$ at $\alpha$ by $g_\alpha: \beta \mapsto g_{\alpha,\beta}$, $\beta\in[0,1]$.

Since clusters $\maC_{i_1}$ and $\maC_{i_2}$ reside on nodes $i_1$ and $i_2$ of $G_k$, respectively, we define $g^k_{\maC_{i_1}\maC_{i_2}}= g^k_{i_1 i_2}$.
Similarly, we define the section $g^k_{\maC_{i}}= g^k_{i}$.

We partition $[0,1]$ into $M_k$ subintervals of equal length.
Here $I_l^k=[(l-1)/M_k, l/M_k]$ for $1\le l\le M_k$.
When it is clear from the context, we omit the superscript $k$ and write $I_l$. To relate the clusters of agents to the vertex set $[0,1]$, we let the cluster $\maC_l$ correspond to $I_l$.

Throughout this paper, $C, C_0, C_1, \ldots$ denote generic constants,
which do not depend on the graph index $k$ and population size $N$ and  may vary from place to place.

\section{Graphon MFG Systems and the GMFG Equations}

\label{sec:gmfg}

 \subsection{The Standard MFG Model and Its Graphon Generalization}

 \label{agentdynamics}

 In the diffusion based models of large population games the state evolution of  a collection of $N$ agents  ${\cal{A}}_{i}, 1 \le i \le N <  \infty, $ is specified by a set  of $N$ controlled  stochastic differential equations (SDEs). A simplified form of the general case  is given by the following set of controlled SDEs
 which for each agent ${\cal{A}}_{i}$ includes state coupling with {\it all} other agents:
\begin{equation}\label{NLMinorDynamics}
dx_i(t)  = \frac{1}{N} \sum_{j=1}^N f(x_i(t),u_i(t), x_j(t))  dt + \sigma dw_i(t), \\
 \end{equation}
where  $x_i \in \mathbb{R}^n$ is the state, $u_i \in \mathbb{R}^{n_u}$  the control input, and  $w_i\in {\mathbb R}^{n_w}$ a standard  Brownian motion, and   where $\{w_i, 1\leq i\leq N\}$ are independent processes. For simplicity,  all collections of system initial conditions are taken to be independent and  have finite second moment.
The cost of agent ${\cal A}_i$ is given by
\begin{equation}
\label{CNP:Minor:GenCost}
J_{i}^{N} (u_i ,u_{-i}) = E \int_0^T \frac{1}{N}\sum_{j=1}^N   l(x_i (t),u_i(t), x_j (t))dt,
\end{equation}
 where   $l(\cdot)$ is the  pairwise running cost, and $u_{-i}$ denotes the controls of all other agents.

The dynamics of a generic agent ${\cal A}_i$ in the infinite population limit of this system is then described by the controlled  McKean-Vlasov (MV) equation
\begin{equation}
\label{MVNLMinorDyn1}
  dx_i = f[x_i,u_i, \mu_t]  dt +\sigma dw_i, \quad
0 \leq t \leq T,
\end{equation}
where  $\mu_t$ is the distribution of  $x_i(t)$, $f[x,u,\mu_t] \coloneqq  \int_{\mathbb{R}^n} f(x,u,y)\mu_t(dy)$ and where the initial distribution $\mu^x_0$ of $x_i(0)$ is specified.
Setting  $l[x,u,\mu_t]=
  \int_{\mathbb{R}^n}l(x,u,y)\mu_t(dy)$,  the corresponding infinite population  cost  for   $ {\cal {A}}_{i}$  takes the form
 \begin{equation}
\label{INFPOPPERF}
  J_{i}(u_i;\mu(\cdot))
  \coloneqq  E\int_0^T l[x_i (t), u _i(t),\mu_t] dt.
 \end{equation}

For notational simplicity, we present the graphon MFG framework with scalar individual states and controls, i.e., $n=n_u=n_w=1$. Its extension to the vector case is evident.

Now we consider a finite population distributed over the finite graph $G_k$.
Let $\mathbold{x}_{ G_k}=\bigoplus_{l=1}^{M_k} \{x_i|i\in \maC_l\}$ denote the states of all agents in the total set of clusters of the population. This gives a total  of $N=\sum_{l=1}^{M_k} |\maC_l|$ individual states.
The key feature of  the graphon MFG construction beyond the standard MFG scheme is that at any agent in a  network  the  averaged dynamics \eqref{NLMinorDynamics} and cost function
 \eqref{CNP:Minor:GenCost} decompose into averages of  subpopulations  distributed at that agent's neighboring  nodes plus  an average term for the local cluster.  In the limit, the summed subpopulation averages are given by an integral over the local mean fields of the neighbouring agents.

For  ${\cal A}_i$ in the cluster $\maC(i)$, two  coupling terms
in the dynamics take the form
\begin{align}
& f_0(x_i, u_i, {\mathcal C}(i)) =\frac{1}{|{\mathcal C}(i)|}\sum_{j\in {\mathcal C}(i)} f_0(x_i, u_i, x_j),  \\
& f_{G_k} (x_i,u_i, g^k_{\maC(i)})=\frac{1}{M_k}
\sum_{l=1}^{M_k} g^k_{\maC(i)\maC_l} \frac{1}{|\maC_l|}
\sum_{j\in \maC_l}f(x_i, u_i, x_j). \label{intercluster}
\end{align}
They model intra- and inter-cluster couplings, respectively.
The specification of $f_{G_k}$ relies on the sectional information
$g^k_{\maC(i)\bullet}$.
Concerning the coupling structure in \eqref{intercluster} we observe that with respect to ${\cal A}_i$, all individuals residing in cluster $\maC_l$ are symmetric and their
state average generates the overall impact of that cluster
on ${\mathcal A}_i$ mediated by the graphon weighting $g^k_{\maC(i)\bullet}$.   The two coupling terms are combined additively resulting in the local dynamics
$$
\tilde f_{G_k}(x_i, u_i, g^k_{\maC(i)})=
 f_0(x_i, u_i,{\mathcal C}(i))+  f_{G_k}(x_i, u_i,g^k_{\maC(i)} ).
$$
Note that ${\mathcal A}_i$  interacts with the overall population through a function of the complete system state
${\mathbold x}_{G_k}$ and the cluster sizes. These details shall be suppressed in this paper and we  only indicate the graph $G_k$ and the section $g^k_{\maC(i)}$.
The state process of  ${\cal A}_i$ is then given by the stochastic differential equation
\begin{align}
dx_i(t) = \tilde f_{G_k}(x_i, u_i,g^k_{\maC(i)} ) dt +\sigma
dw_i, \quad 1\le i\le N, \nonumber
\end{align}
where $\sigma>0$ and the initial states $\{x_i(0), 1\le i\le N\}$ are i.i.d. with distribution $\mu^x_{0}\in {\mathcal P}_1(\mathbb{R})$, the set of  probability measures on $\mathbb{R}$ with finite mean.

The  limit of the two dynamic coupling terms of an agent at a node $\alpha$
(called an $\alpha$-agent),   as the number of nodes of the graph $G_k$ and the subpopulation at each node tend to infinity, is described by the following expressions:
\begin{align}
& {f}_0[x_{\alpha}, u_{\alpha}, {\mu}_{\alpha}] \coloneqq  \int_{{\mathbb R}}  f_0(x_{\alpha},u_{\alpha}, z)  \mu_{\alpha} (dz),
\label{f0brac}\\
& {f}[x_{\alpha}, u_{\alpha}, {\mu}_{G}; g_{\alpha}] \coloneqq  \int_0^1\int_{{\mathbb R}}  f(x_{\alpha},u_{\alpha}, z) g(\alpha, \beta) \mu_{\beta} (dz)d\beta,
\label{fbrac}
\end{align}
 which give the  complete local graphon dynamics via
\begin{equation}
\label{NMFGdynamics}
         \widetilde{f}[x_{\alpha}, u_{\alpha}, {\mu}_{G}; g_{\alpha}]
\coloneqq f_0[x_{\alpha}, u_{\alpha}, \mu_\alpha ] +  {f}[x_{\alpha}, u_{\alpha}, {\mu}_{G}; g_{\alpha}].
\end{equation}
We call $\mu_\beta$ the local mean field at node $\beta$, which is interpreted as the limit of the empirical distributions of agents at node $\beta$.
And $\mu_G=\{\mu_\beta, 0\le \beta\le 1\}$ is the ensemble of local mean fields.
Due to the integration with respect to $\beta$, the dependence of $\widetilde f$ on the graphon limit $g$ is through the section $g_\alpha$.
Since $\mu_G$  contains $\mu_\alpha$, we do not list $\mu_\alpha$ as an argument of $\widetilde f$.

Parallel to the standard MFG case, in the graphon case  the stochastic differential equation
\begin{equation}
\label{MVNLMinorDyn2}
\begin{aligned}
&{\text{[MV-SDE]}}
 (\alpha) \quad  dx_\alpha(t) =  \widetilde{f}[x_{\alpha}(t), u_{\alpha}(t), {\mu}_{G}(t); g_{\alpha}] dt +\sigma dw_\alpha(t),  \\
& \quad 0 \leq t \leq T,\quad
 \alpha \in [0,1],
 \end{aligned}
\end{equation}
generalizes the standard controlled MV equation \eqref{MVNLMinorDyn1}.
We note that in a parallel development of graphon based stochastic dynamical populations \cite{BCW20} the system disturbance intensity $\sigma$ is also a function of  graphon weighted state functions at other clusters.
For simplicity, we consider a constant $\sigma$ and our analysis may be generalized to the case of a state and mean field dependent diffusion term. Similarly, for simplicity our dynamics and cost do not include a separate parametrization by $\alpha$.

Analogously, in the GMFG case, we define the cost
 coupling terms  for ${\cal A}_i$  to be
\begin{align*}
& l_0(x_i, u_i,\maC(i) )= \frac{1}{| \maC(i)|}\sum_{j\in \maC(i)} l_0(x_i, u_i, x_j),  \\
& l_{G_k}(x_i, u_i,g^k_{\maC(i)} )= \frac{1}{M_k}
\sum_{l=1}^{M_k} g^k_{\maC(i)\maC_l} \frac{1}{|\maC_l|}
\sum_{j\in \maC_l}l(x_i, u_i, x_j).
\end{align*}
Define
$
\tilde l_{G_k}(x_i, u_i, g^k_{\maC(i)}) = l_0(x_i, u_i, \maC(i)) + l_{G_k}(x_i, u_i, g^k_{\maC(i)} ) .
$
The cost of ${\mathcal A}_i$ in a finite population on a finite
graph $G_{k}$ is  given in the form
\begin{align}
 J_i= E\int_0^T \tilde l_{G_k}(x_i, u_i,g^k_{\maC(i)}  ) dt.  \label{coGk}
 \end{align}
Denote
 \begin{align}
&   l_0[x_\alpha, u_\alpha,\mu_\alpha]=\int_{\mathbb{R}} l_0 (x_\alpha, u_\alpha, z)  \mu_\alpha(dz) ,
\nonumber   \\
&  l[x_\alpha, u_\alpha,\mu_G; g_\alpha]=\int_0^1 \int_{\mathbb{R}}
l (x_\alpha, u_\alpha, z) g(\alpha, \beta) \mu_\beta(dz) d\beta,
\nonumber\\
&\widetilde l [x_\alpha, u_\alpha, \mu_G; g_\alpha ]
=l_0[x_\alpha,u_\alpha, \mu_\alpha]+l[x_\alpha, u_\alpha,\mu_G; g_\alpha].\nonumber
\end{align}
Then in  the infinite population
graphon case,
the  $\alpha$-agent has the cost function given by
\begin{align}
J_\alpha(u_\alpha; \mu_G(\cdot))=
E\int_0^T \widetilde l [x_\alpha(t), u_\alpha
 (t), \mu_G(t); g_\alpha] dt .  \label{Jalpha}
\end{align}

\subsection{The Graphon MFG Model and Its Equations}

In this section the  standard MFG equations (see e.g. \cite{caines2015mean,CHM17})  will be  generalized so that they subsume the standard (implicitly uniform totally connected) dense network case and cover the fully general graphon limit network case. Specifically,   agent  ${\cal A}_i$ in a population of $N$ agents
will be located at the $l$th node in an $M_k$ node network (identified with its graphon) and in the infinite population graphon limit that node will be taken to map to  $\alpha \in [0,1]$. It is important to note here that {\it although the limit network is assumed dense it is not assumed to be uniformly totally connected}; indeed, the connection structure of the infinite network is represented precisely by its graphon  $ g(\alpha, \beta)$, $0 \le \alpha, \beta \le 1.$

The generalized Graphon MFG  scheme below on $[0,T]$ is given  for
each $\alpha$ by  (i) the Hamilton-Jacobi-Bellman  (HJB) equation generating the value function $V^\alpha$ when all other agents' control laws and the  ensemble $\mu_G$  of local mean fields are given,  (ii) the FPK equation generating the local mean field $\mu_\alpha$ given $\mu_G$, and  (iii) the specification of the best response (BR) feedback law.

 Suppressing the time index on the measures for simplicity of notation, we have the {\it Graphon Mean Field Game (GMFG) equations}:
\begin{align}
\label{MFGPDES}
& {\text{[HJB]}}(\alpha)
 \quad -\frac{\p V^{\alpha  }(t,x)}{\p t}  = \inf_{u\in U}\bigg\{\widetilde{f}[x ,u, {\mu}_{G};g_{\alpha} ]
\frac{\p V^{\alpha}(t,x)}{\p x}\nonumber\\
&\hskip 4cm+ \widetilde{l}[x,u, {\mu}_{G }; g_{\alpha} ]\bigg\}
 +\frac{\sigma^2}{2} \frac{\p^2 V^{\alpha  }(t,x)}{\p x^2},\\
 &V^{\alpha  }  (T,x) =0,    \quad(t,x)\in [0,T]\times\mathbb{R}, \quad  \alpha \in [0,1], \nonumber
  \end{align}
  \begin{align}
 \label{FPK}
{\text{[FPK]}}(\alpha)  \quad
\frac{\p p_\alpha(t,x)}{\p t}  =&-\frac{\p \{ \widetilde{f}[x,    u^0, \mu_{G };g_{\alpha} ] p_\alpha(t,x) \} }{\p x} \nonumber \\
& + \frac{\sigma^2}{2}\frac{\p^2 p_\alpha(t,x)}{\p x^2},
\end{align}
\begin{align}
{ \text{[BR]}}(\alpha) \quad   u^0
& \coloneqq \varphi(t,x | {\mu}_{G};g_{\alpha}  ). \nonumber
\end{align}
Here  $p_\alpha(t,x)$ with initial condition $p_\alpha(0)$ is used to denote the density of the measure $ \mu _\alpha(t)$ whenever a density is assumed to exist. The FPK equation may be replaced by the following closed-loop MV-SDE:
\begin{align}
\text{[MV]}(\alpha)\quad dx_\alpha(t) =  \widetilde{f}[x_{\alpha}(t),\varphi(t,x_\alpha(t) | {\mu}_{G};g_{\alpha}  ) , {\mu}_{G}(t); g_{\alpha}] dt +\sigma dw_\alpha(t),
\label{clMV} \end{align}
where $x_\alpha(0)$ has distribution $\mu_0^x$.
 Our subsequent analysis will directly treat the pair $(V^\alpha(t,x), \mu_\alpha(t))$, where $\mu_\alpha(t)$ is specified as the law of $x_\alpha(t)$ in \eqref{clMV}.

When a solution exists for the GMFG equations, the resulting BR feedback controls depend upon the ensemble $\mu_G$ of local mean fields  and the individual agent's  state. This is a natural generalization of the standard case.
The  standard  MFG case is simply obtained by setting $   g(\alpha, \beta) \equiv 0,  0 \le \alpha, \beta \le 1$, which  totally disconnects the network and results  in $ \widetilde{f}[x,u, {\mu}_G ; g_\alpha] =  {f}_0[x,u, \mu]$ and $ \widetilde{l}[x,u, {\mu}_G ; g_\alpha] =  l_0[x,u,\mu]$
\citep{caines2015mean, CHM17}.

A collection of
measures on some measurable space which are indexed by  the vertex set $[0,1]$
is called a measure ensemble.
Thus, for each fixed $t$, $\mu_G(t)$ is a  measure ensemble.

On ${\mathcal P}_1({\mathbb R})$  we endow the Wasserstein metric $W_1$: for any $\mu, \nu\in {\mathcal P}_1({\mathbb R})$,  $W_1(\mu, \nu)=\inf_{\widehat \gamma} \int|x-y|\widehat \gamma (dx,dy) $, where $\widehat \gamma$ is a probability measure on $\mathbb{R}^2$ with marginals $\mu, \nu$.

Let $C([0,1],{\mathcal P}_1({\mathbb R}))$ be the set of   measure ensembles   $\nu_G=(\nu_\beta)_{\beta\in [0,1]}$ satisfying   $\nu_\beta \in {\mathcal P}_1({\mathbb R})$, and $\lim_{\beta'\to \beta }W_1(\nu_{\beta'}, \nu_\beta)=0$  for any $\beta\in [0,1]$.

In order to analyze the solvability of the GMFG equations, we need to restrict $\mu_G(\cdot)$ to a certain class. We say
$\{\mu_G(t), 0\le t\le T \}$ is from the admissible set  ${\cal M}_{[0,T]}$ if:

(C1) For each  fixed $t$,
  $\mu_G(t)$ is in $C([0,1],{\mathcal P}_1({\mathbb R}))$.

(C2) There exists $\eta\in (0, 1]$ such that for any bounded and Lipschitz continuous function $\phi$ on $\mathbb{R}$,
$$
\sup_{\beta\in [0,1]}\Big|\int_\mathbb{R} \phi(y) \mu_\beta (t_1, dy) -
\int_\mathbb{R} \phi(y) \mu_\beta(t_2, dy)\Big|\le C_h |t_1-t_2|^\eta,
$$
where $C_h$ may be selected to depend only on  the Lipschitz constant $\mbox{ Lip}(\phi)$ for $\phi$.

Condition (C1) ensures  that integration with respect to $d\beta$ in \eqref{fbrac} is well defined.
Condition (C2) ensures that the drift term in the HJB equation \eqref{MFGPDES} has a certain time continuity, which  facilitates the subsequent existence analysis of the best response.

\subsection{Existence Analysis}

We introduce the following assumptions:

(H1) $U$ is a compact set.

(H2) $f_0(x,u,y)$,  $f(x,u,y)$, $l_0(x,u,y)$ and $l(x,u,y)$ are continuous and bounded functions on $\mathbb{R}\times U\times \mathbb{R}$ and  are Lipschitz continuous in $(x,y)$,  uniformly with respect to $u$.

%(H3) For  $f_0,f$ and $l_0, l$,  their first and second derivatives with %respect to  $x$ are all  uniformly continuous and bounded on %$\mathbb{R}\times U\times \mathbb{R}$.

(H3) $f_0(x,u,y)$ and $f(x,u,y)$ are Lipschitz continuous in $u$, uniformly with respect to $(x,y)$.

(H4) For any $q\in \mathbb{R}$, $\alpha \in [0,1]$   and  probability measure ensemble  ${\nu}_{G}\in C([0,1],{\mathcal P}_1({\mathbb R}))$, the set
\begin{align}
S_\alpha^{\nu_G}(x, q)&= \arg\min_{u \in U} \{ q( \widetilde{f}[x, u, {\nu}_{G}; g_{\alpha}]) + \widetilde{l}[x,u, {\nu}_{G };g_{\alpha}] \}
\end{align}
is a singleton, and for any given compact interval ${\mathcal I}=[\underline{q}, \bar q]$, the resulting $u$ as a function of $(x,q)\in \mathbb{R}\times {\mathcal I}$ is Lipschitz continuous in $(x,q)$, uniformly with respect to ${\nu}_{G }$ and $g_{\alpha}$, $0\le \alpha\le 1$.

The next two assumptions will be used to ensure that the best responses have continuous dependence on $\alpha$.
In particular, (H5) is a continuity assumption  on the graphon function $g(\alpha, \beta)$.    Under (H5),  $\widetilde f$ and $\widetilde l$ have continuity in $\alpha$.

(H5) For any bounded and measurable function $h(\beta)$, the function
$\int_0^1g(\alpha, \beta) h(\beta)d\beta$
is continuous in $\alpha \in [0,1]$.

(H6)  For given $\nu_G \in C([0,1],{\mathcal P}_1({\mathbb R}))$,  $S_\alpha^{\nu_G}(x, q) $ is continuous in $(\alpha,x,q)$.

Although the GMFG equation system only involves
$\{\mu_G(t), 0\le t\le T\}$,
which may be viewed as a collection of marginals at different vertices, it is necessary to develop the existence analysis in the underlying probability spaces (see related discussions in
\cite[p.240]{HMC06}).

We begin by introducing some analytic preliminaries.  For the space $C_T=C([0,T], \mathbb{R})$, we specify a $\sigma$-algebra ${\cal F}_T$ induced by all cylindrical sets of the form
$\{x(\cdot)\in C_T: x(t_i)\in B_i, 1\le i\le j \mbox{ for some } j
\}$, where $B_i$  is a Borel set. Let ${\bf M}_T$ denote the space of all probability measures on $(C_T, {\cal F}_T)$.
The canonical process $X$ is defined by $X_t(\omega)=\omega_t$ for $\omega\in C_T$.
On $C_T$, we introduce the metric $\rho(x, y)=\sup_t|x(t)-y(t)|\wedge 1$. Then $(C_T, \rho)$ is a complete metric space. Based on $\rho$, we introduce the Wasserstein metric on ${\bf M}_T$. For $m_1, m_2\in {\bf M}_T$, denote
\begin{align}
&D_T(m_1, m_2) =
\inf_{\widehat m} \int_{C_T\times C_T} \Big(\sup_{s\le T} |X_s(\omega_1)-X_s(\omega_2)|\wedge 1\Big) d\widehat m(\omega_1, \omega_2), \nonumber
\end{align}
where $\widehat  m$ is called a coupling as a probability measure on $(C_T,{\cal F}_T)\times (C_T, {\cal F}_T)$ with the pair of marginals  $m_1$ and $m_2$, respectively. Then $({\bf M}_T, D_T)$ is a complete metric
space \cite{szn91}.

 We introduce the  product of probability measure spaces
$\prod_{\alpha \in [0,1]} (C_T, {\cal F}_T, m_\alpha)$,
where each individual space is interpreted as the
path space of the agent at vertex $\alpha$ with a corresponding probability measure $m_\alpha$. Denote the product of spaces of probability measures
$
{\bf M}_{T}^G=\prod_{\alpha\in [0,1]} {\bf M}_T.
$
An element in ${\bf M}_{T}^G$ is  a measure ensemble.
Given  $m_G\in {\bf M}_{T}^G$,  the projection operator ${\rm Proj}_\alpha$ picks out its component $m_\alpha$ associated with $\alpha\in [0,1]$.  Let ${\bf M}_T^{G0}$ consist of all $(m_\alpha)_{\alpha\in [0,1]}\in{\bf M}_T^{G} $ such that for any $\alpha\in [0,1]$,
 $D_T(m_{\alpha'}, m_{\alpha})\to 0$ as $ \alpha'\to \alpha$.

For two measure ensembles $m_G\coloneqq(m_\alpha)_{\alpha\in [0,1]}$ and $\bar m_G\coloneqq (\bar m_\alpha)_{\alpha \in [0,1]}$ in ${\bf M}_{T}^G$, define
$
d(m_G, \bar m_G)= \sup_{\alpha\in [0,1]} D_T(m_\alpha, \bar m_\alpha).
$

  \begin{lemma}
 $({\bf M}_{T}^{G}, d)$ is a  complete metric space.
\end{lemma}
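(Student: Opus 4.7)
The plan is to verify the two defining properties in order: that $d$ is indeed a metric on ${\bf M}_T^G$, and then that Cauchy sequences converge. The key observation throughout is that because $D_T$ is built from the bounded pseudometric $\rho(x,y)=\sup_t|x(t)-y(t)|\wedge 1$, we have $D_T(m_1,m_2)\le 1$ for all $m_1,m_2\in{\bf M}_T$, so the supremum defining $d$ is automatically finite and we need not worry about $d$ taking the value $+\infty$.

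First I would check the metric axioms. Non-negativity, symmetry, and the fact that $d(m_G,\bar m_G)=0$ forces $m_\alpha=\bar m_\alpha$ for every $\alpha$ (hence $m_G=\bar m_G$ as ensembles) follow immediately from the corresponding properties of $D_T$ on each coordinate. The triangle inequality is inherited coordinatewise: for any $\alpha$, $D_T(m_\alpha,\bar m_\alpha)\le D_T(m_\alpha,\tilde m_\alpha)+D_T(\tilde m_\alpha,\bar m_\alpha)\le d(m_G,\tilde m_G)+d(\tilde m_G,\bar m_G)$, and taking the supremum over $\alpha$ on the left yields the result.

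For completeness, let $\{m_G^n\}_{n\ge 1}$ be a Cauchy sequence in $({\bf M}_T^G,d)$. Since for each fixed $\alpha\in[0,1]$ we have $D_T(m_\alpha^n,m_\alpha^k)\le d(m_G^n,m_G^k)$, the sequence $\{m_\alpha^n\}_{n\ge 1}$ is Cauchy in $({\bf M}_T,D_T)$, which is complete (as recalled just before the lemma, cf.\ \cite{szn91}). Hence there exists a unique $m_\alpha\in{\bf M}_T$ with $D_T(m_\alpha^n,m_\alpha)\to 0$ as $n\to\infty$. Set $m_G\coloneqq(m_\alpha)_{\alpha\in[0,1]}\in{\bf M}_T^G$.

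It remains to show $d(m_G^n,m_G)\to 0$. Given $\epsilon>0$, pick $N$ so that $d(m_G^n,m_G^k)<\epsilon/2$ for all $n,k\ge N$; then $\sup_{\alpha}D_T(m_\alpha^n,m_\alpha^k)<\epsilon/2$. For each fixed $\alpha$, letting $k\to\infty$ and using the lower semicontinuity of $D_T$ in each argument (equivalently, taking a limit in the Cauchy inequality), we get $D_T(m_\alpha^n,m_\alpha)\le\epsilon/2$ for every $\alpha\in[0,1]$ and every $n\ge N$. Taking the supremum over $\alpha$ yields $d(m_G^n,m_G)\le\epsilon/2<\epsilon$ for $n\ge N$, establishing completeness. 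The argument is essentially routine; the only point requiring minor care is ensuring that the pointwise-in-$\alpha$ passage to the limit is compatible with the supremum, which is handled by fixing $n\ge N$, letting $k\to\infty$ coordinatewise, and only then taking the supremum over $\alpha$.
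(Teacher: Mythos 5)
Your proof is correct and follows the same route as the paper: extract the coordinatewise limit using completeness of $({\bf M}_T, D_T)$ from \cite{szn91}, then pass to the limit in the Cauchy estimate uniformly in $\alpha$. You simply spell out the metric axioms and the final uniform-convergence step, which the paper leaves implicit.
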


\proof If $\{m_G^k, k\ge 1\}$ is a Cauchy sequence in ${\bf M}_{T}^G$,
then for each given $\alpha$, the sequence $\{{\rm Proj}_\alpha (  m_G^k), k\ge 1 \}
$ (of probability measures)
is a Cauchy sequence in the complete metric space ${\bf M}_T$
and so it contains  a limit. This in turn  determines a limit in
${\bf M}_{T}^G$. \endproof

 Given the probability measure $m_\alpha\in {\bf M}_T$, we determine the $t$-marginal $\mu_\alpha(t) $ by
$\mu_\alpha(t, B)=m_\alpha (\{x(\cdot)\in C_T: x(t)\in B\})$ for any Borel set $B\subset \mathbb{R}$, and denote the mapping from ${\bf M}_T$ to ${\mathcal P}(\mathbb{R})$ (the set of probability measures on ${\mathbb R}$):
\begin{align}
 \mu_\alpha(t)={\rm Marg}_t (m_\alpha ). \label{marjma}
\end{align}
Consider the measure ensemble  $m_G=(m_\alpha)_{\alpha \in [0,1]}\in {\bf M}_{T}^G$ with $\mu_\alpha(t)$ given by \eqref{marjma}. Define the time $t$ marginals  by the following mapping
\begin{align}
{\rm Marg}_t (m_G)= (\mu_\alpha(t))_{\alpha \in [0,1]}   , \label{mualt}
\end{align}
where the right hand side is simply written as $\mu_G(t) $.
For a given $t$,  $\mu_G(t) $ may be interpreted as a measure valued function defined on  the vertex set $[0,1]$.
Further denote the mapping
$ {\rm Marg} (m_G) = (\mu_G(t))_{t\in [0,T]} = \mu_G(\cdot)$.

Take a fixed
\begin{align}
 \mu_G(\cdot) \in {\cal M}_{[0,T]}\label{muGMfix}
 \end{align}
 with its associated H\"older parameter $\eta$ in (C2), and denote
 \begin{align*}
\widetilde f_\alpha^*(t, x, u)=\widetilde f[x,u,\mu_G(t) ; g_\alpha],
\quad \widetilde l_\alpha^*(t, x, u)=\widetilde l[x,u,\mu_G(t) ; g_\alpha]. \end{align*}

\begin{lemma}\label{lemma:HL} Assume \emph{(H1)--(H2)}.
For $h_\alpha=\widetilde f_\alpha^*(t, x, u)$ or $  \widetilde l_\alpha^*(t, x, u)  $,  there exist   constants $C$ and $C_{\mu_G}$, where the latter  depends on $\mu_G(\cdot)$,  such that
\begin{align*}
&\sup_{t,u,\alpha}|h_\alpha(t, x, u) -h_\alpha(t, y, u)|\le C|x-y|,\\
&\sup_{x,u,\alpha}|h_\alpha(t, x, u) -h_\alpha(s, x, u)|\le C_{\mu_G}|t-s|^\eta, \end{align*}
where the supremum is taken over $t\in [0,T]$, $x\in \mathbb{R}$, $u\in U$ and $\alpha \in [0,1]$.
\end{lemma}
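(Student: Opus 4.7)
The statement breaks into two separate assertions (Lipschitz in $x$, H\"older in $t$) for each of the two functions $\widetilde f_\alpha^*$ and $\widetilde l_\alpha^*$. The strategy in every case is to move the relevant modulus of continuity inside the integrals defining $\widetilde f$ and $\widetilde l$, and to use that (i) the kernels $f_0,f,l_0,l$ satisfy uniform Lipschitz/boundedness estimates by (H2), and (ii) $g(\alpha,\beta)\in[0,1]$ so that $\int_0^1 g(\alpha,\beta)\,d\beta\le 1$ uniformly in $\alpha$. I will write the argument for $h_\alpha=\widetilde f_\alpha^*$; the reasoning for $\widetilde l_\alpha^*$ is identical.

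\textbf{Step 1: Lipschitz in $x$.} Recall from \eqref{NMFGdynamics} that
\[
\widetilde f_\alpha^*(t,x,u)=\int_\mathbb{R}f_0(x,u,z)\,\mu_\alpha(t,dz)+\int_0^1\!\!\int_\mathbb{R}f(x,u,z)\,g(\alpha,\beta)\,\mu_\beta(t,dz)\,d\beta.
\]
By (H2), there is $L$ independent of $u,z$ with $|f_0(x,u,z)-f_0(y,u,z)|\le L|x-y|$ and similarly for $f$. Pulling these pointwise estimates under the two integrals and using $g(\alpha,\beta)\le 1$ and the fact that each $\mu_\beta(t,\cdot)$ is a probability measure gives
\[
|\widetilde f_\alpha^*(t,x,u)-\widetilde f_\alpha^*(t,y,u)|\le 2L|x-y|,
\]
uniformly in $(t,u,\alpha)$. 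This yields the first inequality with $C=2L$.

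\textbf{Step 2: H\"older in $t$.} Fix $(x,u)$ and consider the two test functions $z\mapsto f_0(x,u,z)$ and $z\mapsto f(x,u,z)$. By (H2) both are bounded, and both are Lipschitz in $z$ with a Lipschitz constant at most $L$ that is independent of $(x,u)$. Hence (C2) applies, with the constant $C_h$ depending only on the Lipschitz constant $L$ (so only on $\mu_G(\cdot)$, not on $x,u,\alpha$), giving
\[
\sup_{\beta\in[0,1]}\Big|\!\int_\mathbb{R}\!f_0(x,u,z)\bigl(\mu_\beta(t,dz)-\mu_\beta(s,dz)\bigr)\Big|\le C_h|t-s|^\eta,
\]
and the analogous bound with $f$ in place of $f_0$. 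Specialising the first to $\beta=\alpha$ controls the $f_0$-part of $\widetilde f_\alpha^*(t,x,u)-\widetilde f_\alpha^*(s,x,u)$; integrating the second against $g(\alpha,\beta)\,d\beta$ and using $\int_0^1 g(\alpha,\beta)\,d\beta\le 1$ controls the $f$-part. Adding the two contributions yields
\[
\sup_{x,u,\alpha}|\widetilde f_\alpha^*(t,x,u)-\widetilde f_\alpha^*(s,x,u)|\le 2C_h|t-s|^\eta,
\]
which is the second inequality with $C_{\mu_G}=2C_h$.

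\textbf{Step 3: The cost coefficient.} The argument is line for line the same for $h_\alpha=\widetilde l_\alpha^*$, since $l_0$ and $l$ enter $\widetilde l$ through the same two integral operators as $f_0$ and $f$ enter $\widetilde f$, and by (H2) they satisfy the same boundedness and Lipschitz hypotheses.

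\textbf{Anticipated obstacle.} There is no serious obstacle; the only point requiring care is to verify that the constant in (C2) can be applied uniformly in $(x,u,\alpha)$. This is exactly where the \emph{uniform} Lipschitz continuity of $f_0,f$ in $z$ (and the fact that $C_h$ in (C2) depends only on $\text{Lip}(\phi)$) is crucial, and it is also why the H\"older constant inherits a dependence on $\mu_G(\cdot)$ but none on $x,u,\alpha$.
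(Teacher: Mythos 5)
Your proposal is correct and follows essentially the same route as the paper's (much terser) proof: Lipschitz continuity in $x$ from (H2) pushed through \eqref{f0brac}--\eqref{fbrac} with $g\le 1$ and probability measures, and H\"older continuity in $t$ by treating $z\mapsto f_0(x,u,z)$, $z\mapsto f(x,u,z)$ as bounded Lipschitz test functions in (C2), using that $C_h$ there depends only on the Lipschitz constant so the bound is uniform in $(x,u,\alpha)$. Nothing further is needed.
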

\begin{proof} The Lipschitz continuity of $\widetilde f^*_\alpha$ with respect to $x$ follows from (H2) and \eqref{f0brac}--\eqref{fbrac}.
For $t_1, t_2\in [0,T]$, we
estimate $|\widetilde f[x,u, \mu_G(t_1); g_\alpha] -\widetilde  f[x, u,\mu_G(t_2); g_\alpha] |$ by using the Lipschitz condition of $f_0$, $f$ and condition (C2) for ${\cal M}_{[0,T]}$. This establishes the H\"older continuity of
$\widetilde f_\alpha^*$ in $t$. The other cases can be similarly checked.
\end{proof}

In order to analyze the best response of the $\alpha$-agent, we
introduce the HJB equation
\begin{align}
-V^\alpha_t(t,x)= \inf_{u\in U} \{ \widetilde f_\alpha^*(t, x, u) V_x^\alpha (t,x)+\widetilde l_\alpha^*(t,x, u) \} +\frac{\sigma^2}{2} V_{xx}^\alpha (t,x), \label{hjbmug}
\end{align}
where $V^\alpha (T,0)=0$. It differs from \eqref{MFGPDES} by allowing an arbitrary $\mu_G(\cdot)\in {\mathcal M}_{[0,T]}$.

For studying \eqref{hjbmug}, we introduce some standard definitions.
Denote $Q_T= (0,T)\times \mathbb{R}$,
and $\overline Q_T = [0,T]\times \mathbb{R}$.
Let $C^{1,2} (\overline Q_T)$ (resp., $C^{1,2} ( Q_T)$) denote
the set of functions with continuous derivatives $v_t, v_x, v_{xx}$ on
$ \overline Q_T $ (resp., $Q_T$).
Let $C_b^{1,2} (\overline Q_T)$ be the set  of bounded functions in
$C^{1,2} (\overline Q_T)$, and
let the open (or closed) set $Q_b$ be a bounded subset of $Q_T$.
 $W^{1,2}_{\lambda}(Q_b)$, $1\le \lambda < \infty $, shall denote the Sobolev space consisting of functions $v$ such that each $v$ and its  generalized derivatives $v_t$, $v_x$, $v_{xx}$ are in $L^\lambda (Q_b)$; further we have the norm
\begin{align}
\|v\|_{\lambda, Q_b}^{(2)} = \|v\|_{\lambda, Q_b} + \|v_t\|_{\lambda, Q_b} +\|v_x\|_{\lambda, Q_b} +\|v_{xx}\|_{\lambda, Q_b}, \label{sobn}
\end{align}
where $\|v\|_{\lambda, Q_b} = (\int_{Q_b} |v(t,x)|^\lambda dt dx)^{1/\lambda}$.
Set $|v|_{Q_b}=\sup_{(t,x)\in Q_b} |v(t,x)|$.
Now for $Q_b=(T_1,T_2)\times {\cal I} $, where ${\cal I}$ is a bounded open subset of $\mathbb{R}$, and  $\beta\in (0, 1) $, define the  H\"older norms
\begin{align*}
&|v|^\beta_{Q_b}= |v|_{Q_b} + \sup_{ t\in (T_1, T_2),x,y\in {\cal I} } |v(t, x)-v(t,y)|
\cdot | x-y|^{-\beta}\\
&\qquad\qquad +
\sup_{s,t\in (T_1,T_2),x\in {\mathcal I}}|v(s,x)-v(t,x)|\cdot|s-t|^{-\beta/2}, \\
&|v|^{1+\beta}_{Q_b} = |v|^{\beta}_{Q_b} +|v_x|^{\beta}_{Q_b}, \\
&|v|^{2+\beta}_{Q_b} = |v|^{1+\beta}_{Q_b} +|v_t|^{\beta}_{Q_b} + |v_{xx}|^{\beta}_{Q_b}.
\end{align*}

\begin{lemma} \label{lemma:HJBc12}
Under \emph{(H1)--(H4)}, the following holds:

\emph{(i)} Equation \eqref{hjbmug} has a unique solution  $V^\alpha$ in
$C_b^{1,2}(\overline Q_T)$ and moreover $\sup_{\overline Q_T} |V^\alpha_{xx}|\le C$.

\emph{(ii)} %If we further assume \emph{(H6)}, then
 The best response
  \begin{align}\label{gmubr}
  u_\alpha = \phi_\alpha(t, x|\mu_G(\cdot)), \quad \alpha\in [0,1]
  \end{align}
as the optimal control law solved from \eqref{hjbmug} is bounded and Borel measurable   on $[0,T]\times \mathbb{R}$, and
 Lipschitz continuous in $x$, uniformly with respect to $\alpha$ for the given $\mu_G(\cdot)$.
\end{lemma}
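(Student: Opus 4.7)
My plan is to treat \eqref{hjbmug} as a uniformly parabolic semilinear Cauchy problem with Hamiltonian
\[
H_\alpha(t,x,p) \coloneqq \inf_{u\in U}\{\widetilde f^*_\alpha(t,x,u)\,p + \widetilde l^*_\alpha(t,x,u)\}.
\]
Combining Lemma~\ref{lemma:HL}, compactness of $U$ from (H1), and the structure of (H4), the function $H_\alpha$ is bounded on compact $p$-sets, globally Lipschitz in $p$ with constant $\sup|\widetilde f^*_\alpha|$, Lipschitz in $x$ uniformly in $(t,p,\alpha)$ on compact $p$-sets, and H\"older continuous in $t$ with exponent $\eta$ uniformly on bounded $p$-sets; moreover under (H4) the unique minimizer $\hat u_\alpha(t,x,p)$ is Lipschitz in $(x,p)$ on any compact $p$-interval $\mathcal I$, uniformly in $(t,\alpha,\mu_G)$.

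For part (i), I first secure a priori $L^\infty$ bounds via the stochastic control representation: on an auxiliary filtered probability space set
\[
\widehat V^\alpha(t,x) \coloneqq \inf_{u(\cdot)} E\int_t^T \widetilde l^*_\alpha(s,X^{t,x}_s,u_s)\,ds,
\]
where $dX^{t,x}_s = \widetilde f^*_\alpha(s,X^{t,x}_s,u_s)\,ds + \sigma\,dW_s$ with $X^{t,x}_t = x$. Boundedness of $\widetilde l^*_\alpha$ yields $|\widehat V^\alpha|\le C$, and a Gr\"onwall estimate using the $x$-Lipschitz continuity of the coefficients gives $|\widehat V^\alpha(t,x)-\widehat V^\alpha(t,y)|\le C|x-y|$ uniformly in $(t,\alpha)$. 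Truncating $H_\alpha$ for $|p|$ beyond this Lipschitz constant produces an equivalent semilinear parabolic equation with a globally bounded Hamiltonian that is Lipschitz in $(x,p)$ and H\"older in $t$; classical Schauder and $W^{1,2}_\lambda$ theory for nondegenerate parabolic equations then delivers a unique $C_b^{1,2}(\overline Q_T)$ solution $V^\alpha$, and a standard verification argument identifies $V^\alpha = \widehat V^\alpha$, so the truncation is invisible. The bound $\sup_{\overline Q_T}|V^\alpha_{xx}|\le C$ then follows by differentiating \eqref{hjbmug} formally once in $x$ to obtain a linear parabolic equation for $V^\alpha_x$ whose coefficients are Lipschitz in $x$ and H\"older in $t$ uniformly in $\alpha$, and applying interior $W^{1,2}_\lambda$ estimates together with Sobolev embedding.

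For part (ii), the minimizer in the HJB evaluated at the obtained $V^\alpha$ is
\[
\phi_\alpha(t,x|\mu_G(\cdot)) = \hat u_\alpha(t,x,V^\alpha_x(t,x)).
\]
Since $\|V^\alpha_x\|_\infty$ is bounded by a constant independent of $\alpha$, we may take $\mathcal I = [-C,C]$ in (H4) and obtain Lipschitz dependence of $\hat u_\alpha$ on $(x,p)\in\mathbb R\times\mathcal I$ uniformly in $(t,\alpha,\mu_G)$; composing with the Lipschitz map $x\mapsto V^\alpha_x(t,x)$, whose Lipschitz constant is $\sup|V^\alpha_{xx}|\le C$, yields the claimed uniform-in-$\alpha$ Lipschitz continuity in $x$. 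Boundedness is inherited from compactness of $U$, and Borel measurability on $[0,T]\times\mathbb R$ follows from joint continuity of $\hat u_\alpha$ in $(x,p)$ (under (H4)) and of $V^\alpha_x$ in $(t,x)$.

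The step I expect to be the main obstacle is securing the uniform-in-$\alpha$ pointwise bound on $V^\alpha_{xx}$. One must verify that every constant entering the Schauder / $W^{1,2}_\lambda$ estimates depends only on quantities controlled uniformly in $\alpha$, namely $\sigma$, the diameter of $U$, the uniform Lipschitz and H\"older constants supplied by Lemma~\ref{lemma:HL}, and the uniform Lipschitz constant of $\hat u_\alpha$ on $\mathcal I$ from (H4), rather than on any graphon-section-dependent quantity. This uniformity is precisely what will later be needed to propagate continuity in $\alpha$ of $V^\alpha$ and $\phi_\alpha$ into the fixed-point step that closes the GMFG existence argument.
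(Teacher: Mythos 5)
Your overall architecture for part (i)--(ii) (a priori sup and gradient bounds from the stochastic representation, parabolic regularity for the semilinear equation, then composing the Lipschitz minimizer from (H4) with $V^\alpha_x$) is the same as the paper's, which follows \cite[Thm.\ VI.6.2]{FR75}. However, the step you yourself flag as the main obstacle --- the uniform bound $\sup_{\overline Q_T}|V^\alpha_{xx}|\le C$ --- is obtained by an argument that fails under the paper's hypotheses. You propose to ``differentiate \eqref{hjbmug} formally once in $x$'' and treat $V^\alpha_x$ as a solution of a linear parabolic equation ``whose coefficients are Lipschitz in $x$ and H\"older in $t$.'' Under (H2) the data $f_0,f,l_0,l$ are only Lipschitz in $x$, not differentiable, so the differentiated equation would involve $\partial_x\widetilde f^*_\alpha$ and $\partial_x\widetilde l^*_\alpha$ evaluated along the minimizer, which need not exist; the Hamiltonian $\mathbold{H}_\alpha$ is itself only Lipschitz in $(x,q)$, so even a.e.\ derivatives are merely bounded measurable, nowhere near the regularity you claim for the coefficients. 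Moreover, asserting that $V^\alpha_x$ solves the differentiated equation presupposes regularity of $V^\alpha$ (third derivatives, or a difference-quotient justification) that is not available at that stage. The paper's bootstrap avoids differentiation entirely: first view $\mathbold{H}_\alpha(t,x,V^\alpha_x(t,x))$ as a bounded free term and apply interior $W^{1,2}_\lambda$ estimates ($\lambda=4>n+2$) plus embedding to get the uniform local H\"older bound \eqref{VV34} on $V^\alpha$ and $V^\alpha_x$; then, since $\mathbold{H}_\alpha(t,x,q)$ is Lipschitz in $(x,q)$ on the compact range $[-C_1,C_1]$ and H\"older in $t$, the composed free term is H\"older in $(t,x)$ uniformly in $(\alpha,x_0)$ as in \eqref{HC5}, and local Schauder estimates give \eqref{Vb6} and hence \eqref{vxxc6}. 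You should replace your differentiation step by this (or an equivalent) second bootstrap; everything downstream in your part (ii) then goes through as in the paper.

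Two smaller points. Your truncation-in-$p$ device has a circularity to resolve: to conclude ``the truncation is invisible'' you need a gradient bound on the solution of the \emph{truncated} PDE (not only on $\widehat V^\alpha$) before the verification argument identifies the two; this is exactly the delicate point handled in \cite[Thm.\ VI.6.2]{FR75}, and you should either reproduce that estimate or argue directly with the value function as the paper does. Also, Borel measurability of $\phi_\alpha$ on $[0,T]\times\mathbb{R}$ does not follow from continuity in $(x,q)$ alone, since $S_\alpha^{\mu_G(t)}$ depends on $t$ through $\mu_G(t)$ and only (H1)--(H4) are in force here; measurability in $t$ requires a separate (measurable-selection type) argument, for which the paper cites \cite[p.\ 168]{FR75}.
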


\begin{proof}
(i) Denote
$$
 \mathbold{H}_\alpha(t, x, q)=\min_{u\in U}\{ q \widetilde f_\alpha^*(t, x, u) +\widetilde l_\alpha^*(t,x, u)\} .
$$
Then \eqref{hjbmug}  may be rewritten as
\begin{align}\label{VHV}
- V^\alpha_t (t,x)  =
\mathbold{H}_\alpha(t, x, V^\alpha_x) + \frac{\sigma^2}{2}
 V^\alpha_{xx}, \qquad  V^\alpha(T,x)=0.
 \end{align}
As in the proof of \cite[Theorem 5]{HMC06}, we use H\"older and Lipschitz continuity (with respect to $t$ and $x$, respectively) of $\widetilde f_\alpha^*$ and $\widetilde l_\alpha^*$ in Lemma \ref{lemma:HL}, and  follow the method in the proof of Theorem VI.6.2 of \cite[p. 210]{FR75} to show that \eqref{hjbmug} has a unique solution $V^\alpha\in C_b^{1,2}(\overline Q_T)$,
where uniqueness follows from a verification theorem using  the closed-loop state process.

Next we show that $V_{xx}^\alpha$ is bounded on $\overline Q_T$.
Take any $x_0\in \mathbb{R}$. Denote $B_{r}(x_0)= (x_0-r, x_0+r)$ for $r>0$, and $Q_T^{x_0,r}=(0,T)\times B_r(x_0)$.
We use two steps involving local estimates. Each step gets refined information about $V^\alpha$ in a region based on available bound information in a
larger region. It suffices to obtain a bound of $V_{xx}^\alpha$ on
$Q_T^{x_0, 1} $ as long as this bound does not change with $x_0$.

Step 1.
First,  there exists a constant $C_1$ such that
\begin{align}
\sup_{t,x,\alpha}|V^\alpha|\le C_1, \quad \sup_{t,x,\alpha}|V_x^\alpha|\le C_1.  \label{VVC1}
\end{align}
The first inequality is obtained using (H1)--(H2) and the fact that $V^\alpha$ is the value function of the associated optimal control problem. The second inequality is proven by the difference estimate of $|V^\alpha(t,x)-V^\alpha(t, y)|$
as in \cite[p. 209]{FR75}.

By (H1), (H2) and \eqref{VVC1}, we have
$$
\sup_{\alpha}\sup_{(t,x)\in \overline Q_T  }|\mathbold{H}_\alpha (t, x, V_x^\alpha(t,x))|\le C_2.
$$

We use a typical method for analyzing  semilinear parabolic equations.
Once $V^\alpha$ is known to be  a solution of \eqref{VHV},
 we view $V^\alpha$ as the solution of a linear equation  with the free term $\mathbold{H}_\alpha (t, x, V_x^\alpha)$.
 For further estimates, we need $\lambda>n+2$ when using the norm \eqref{sobn}.
Fix $\lambda=n+3=4$.
 This yields the  bound
$$
\|V^\alpha\|^{(2)}_{\lambda,Q_T^{x_0, 2}} \le C_3,
$$
where $C_3$ depends on $(C_2, T , \sigma)$ and the bound of $(f, f_0, l, l_0)$ but not on $x_0$, $\alpha$; see \cite[p. 207]{FR75}  and also \cite[p. 342]{Ladyzh68} for local estimates of the Sobolev norm of solutions defined on unbounded domain using a cut-off function.
Take $\beta= 1-\frac{n+2}{\lambda}=\frac{1}{4}$.
Subsequently, since  $\lambda>n+2$, we have the H\"older estimate
\begin{align}
|V^\alpha |_{Q^{x_0, 2}_T}^{1+\beta} \le C_4\| V^\alpha \|^{(2)}_{_{\lambda,Q_T^{x_0, 2}}}\le C_3C_4, \label{VV34}
\end{align}
where $C_4$ is determined by $\lambda =4$ without depending on $x_0, \alpha$; see \cite[p. 207]{FR75}, \cite[p. 343]{Ladyzh68}.

Step 2.  On $[0,T]\times \mathbb{R} \times [-C_1, C_1]$, we can show
${\mathbold H}_\alpha(t, x, q)$ is H\"older continuous in $t$ and Lipschitz continuous in $(x, q)$.  Denote $\beta_1= \min\{\eta, \beta\}$.
Next we view $\mathbold{H}_\alpha (t, x, V_x^\alpha(t,x))$ as  a function of $(t, x)$. Then by use of \eqref{VV34} we further obtain a bound on the H\"older norm:
\begin{align}
\sup_\alpha\sup_{x_0}|\mathbold{H}_\alpha (\cdot, \cdot, V_x^\alpha) |^{\beta_1}_{Q^{x_0, 2}_T} \le C_5.\label{HC5}
\end{align}

Subsequently, by the method in \cite[p. 207-208]{FR75} with its cut-off function technique and \cite[p. 351-352]{Ladyzh68},
we  use \eqref{HC5} and local H\"older estimates of \eqref{VHV} to obtain
\begin{align}
|V^\alpha|^{2+\beta_1}_{Q^{x_0, 1}_T}  \le C_{6 }, \label{Vb6}
\end{align}
where $C_6$ depends on $C_5$ but not on $x_0,\alpha$.
Since $x_0$ is arbitrary,
it follows that
\begin{align}
\sup_\alpha\sup_{\overline Q_T} |V^\alpha_{xx}|\le C_6 . \label{vxxc6}
\end{align}

(ii) By (H4), the optimal control law \eqref{gmubr}
 %\begin{align}
 %u_\alpha = \phi_\alpha(t, x|\mu_G(\cdot)), \quad
 %\alpha\in [0,1]
 %\end{align}
as a  function of $(t, x)$ is well defined  and is bounded on $[0,T]\times {\mathbb R}$ by compactness of $U$. It is Borel measurable on $\overline Q_T$; see \cite[p.168]{FR75}.
Since $S_\alpha^{\nu_G}(x, q)$ is Lipschitz continuous in $(x, q)\in \mathbb{R}\times [-C_1, C_1]$ and $V_x^\alpha(t,x)$ is  Lipschitz continuous in $x\in \mathbb{R}$ by  \eqref{vxxc6}, uniformly with respect to $\alpha$ in each case,  $\phi_\alpha$ is uniformly Lipschitz continuous in $x$.
\end{proof}

Denote
$$
\Psi^\alpha(t,x)= (V^\alpha(t,x),V_t^\alpha(t,x), V^\alpha_x(t,x),
V^\alpha_{xx}(t,x)), \quad (t,x)\in \overline Q_T.
$$
We prove the following continuity lemma for the solution of \eqref{hjbmug}.
For $\overline Q_T$, define the compact subsets $ B_j= \{(t, x)| 0\le t\le T, |x|\le j\}$,  $j\in \mathbb{N}$.

\begin{lemma} \label{lemma:vxcont}
Assume \emph{(H1)--(H5)} hold and let $\mu_G(\cdot)$ in \eqref{muGMfix} be fixed. Then the following holds:

\emph{(i)} For all compact set $ B_{j}$,
  %\begin{align}
$\lim_{\alpha'\to \alpha }|\Psi^{\alpha'}-\Psi^\alpha |_{ B_{j}}=0$.
 %\end{align}

\emph{(ii)} $\lim_{\alpha'\to \alpha }V_x^{\alpha'}(t,x) =V_x^\alpha (t,x)$ for all $ (t,x)\in [0,T]\times \mathbb{R}$.
\end{lemma}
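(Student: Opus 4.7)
The plan is to combine the uniform $C^{2+\beta_1}$ bound $|V^\alpha|^{2+\beta_1}_{Q_T^{x_0,1}}\le C_6$ established inside the proof of Lemma \ref{lemma:HJBc12} with Arzel\`a--Ascoli and the uniqueness part of that same lemma. Given any sequence $\alpha_k\to\alpha$, the strategy is to show that every subsequence of $\{\Psi^{\alpha_k}\}$ admits a further subsequence converging uniformly on each $B_j$ to $\Psi^\alpha$; by the usual subsequence principle this yields (i), and (ii) is then immediate since every point of $[0,T]\times\mathbb{R}$ lies in some $B_j$.

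The key preliminary step is continuity in $\alpha$ of the Hamiltonian $\mathbold{H}_\alpha(t,x,q)=\inf_{u\in U}\{q\widetilde f^*_\alpha(t,x,u)+\widetilde l^*_\alpha(t,x,u)\}$. Writing $\widetilde f^*_\alpha(t,x,u)=f_0[x,u,\mu_\alpha(t)]+\int_0^1 g(\alpha,\beta)\,h^{t,x,u}(\beta)\,d\beta$ with $h^{t,x,u}(\beta)=\int_{\mathbb{R}}f(x,u,z)\mu_\beta(t,dz)$, the first summand is continuous in $\alpha$ because $\mu_\alpha(t)$ is $W_1$-continuous in $\alpha$ by condition (C1) and $f_0$ is Lipschitz in $z$; the map $h^{t,x,u}$ is bounded by $\|f\|_\infty$ and continuous in $\beta$ (again by (C1) together with the Lipschitzness of $f$), so (H5) delivers continuity in $\alpha$ of the second summand. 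The same reasoning applies to $\widetilde l^*_\alpha$. Combined with uniform-in-$\alpha$ Lipschitz continuity in $(x,u)$ from (H2)--(H3) and uniform-in-$\alpha$ H\"older continuity in $t$ from Lemma \ref{lemma:HL}, this gives joint continuity of $\widetilde f^*_\alpha$ and $\widetilde l^*_\alpha$, hence of $\mathbold{H}_\alpha(t,x,q)$, in $(\alpha,t,x,q)$ on compact sets, with a Lipschitz constant in $q$ that is uniform in $\alpha$.

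The estimate \eqref{Vb6} is uniform in both $x_0$ and $\alpha$, so $\{\Psi^\alpha\}_{\alpha\in[0,1]}$ is uniformly bounded and uniformly H\"older-equicontinuous on every compact $B_j$. Arzel\`a--Ascoli together with a diagonal argument then produces, for any $\alpha_k\to\alpha$, a subsequence along which $\Psi^{\alpha_k}$ converges uniformly on every $B_j$ to a limit $\Psi^*=(V^*,V^*_t,V^*_x,V^*_{xx})$ with $V^*\in C_b^{1,2}(\overline Q_T)$ whose derivatives are the corresponding components of $\Psi^*$. Passing to the limit in $-V^{\alpha_k}_t=\mathbold{H}_{\alpha_k}(t,x,V^{\alpha_k}_x)+\frac{\sigma^2}{2}V^{\alpha_k}_{xx}$ via uniform convergence on compact sets, the uniform Lipschitz bound of $\mathbold{H}$ in $q$, and the $\alpha$-continuity of $\mathbold{H}_\cdot$ established above, shows that $V^*$ solves \eqref{hjbmug} at parameter $\alpha$ with $V^*(T,\cdot)=0$. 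Uniqueness in Lemma \ref{lemma:HJBc12}(i) then forces $V^*=V^\alpha$ and $\Psi^*=\Psi^\alpha$, closing the subsequence argument and proving (i); assertion (ii) follows at once. The main obstacle is the justification of the continuity of $\mathbold{H}_\alpha$ in $\alpha$, since (H6) is not assumed here; this is precisely where (H5) and the admissibility condition (C1) for $\mu_G(\cdot)$ enter in a crucial way.
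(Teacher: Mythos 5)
Your proposal follows essentially the same route as the paper: the uniform $C^{2+\beta_1}$ bound \eqref{Vb6}, Arzel\`a--Ascoli with a diagonal argument on the sets $B_j$, passage to the limit in the HJB equation using the $\alpha$-continuity of $\mathbold{H}_\alpha$ (which the paper likewise attributes to (H5) and (C1), and which you spell out in more detail), and uniqueness of the solution to identify the limit with $V^\alpha$; your appeal to the subsequence principle is just the paper's contradiction argument in Steps 2--3 in equivalent form. The proof is correct and matches the paper's argument.
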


\begin{proof}
It suffices to show (i) as (ii) follows immediately from (i).

Step 1.
By  \eqref{Vb6} and the fact that the constant $ C_6$  can be selected without depending on $\alpha$,
there exists a constant $C$ such that
$\sup_{\alpha}|V^\alpha|^{2+\beta_1}_{B_j} \le C,
$
 which implies that $\{\Psi^\alpha, \alpha \in [0,1]\}$ is uniformly bounded and equicontinuous on $B_j$.
For any sequence $\{\alpha_k, k\ge 1\}$ converging to $\alpha$, by Ascoli-Arzela's lemma, for $j=1$, there exists a subsequence denoted by $\{\bar \alpha_k, k\ge 1\}$ such that $\Psi^{\bar \alpha_k}$ converges uniformly on $ B_{1}$. By  a diagonal  argument, we may further extract a subsequence of
$\{\bar \alpha_k, k\ge 1\}$, denoted by $\{\hat \alpha_k, k\ge 1\}$, such that  $\Psi^{\hat \alpha_k}$ converges uniformly on each set $ B_{j}$, $j\ge 1$. Hence there exists a function $V^*$ with continuous derivatives $V^*_t, V^*_x, V^*_{xx}$ on $\overline Q_T$ such that
 \begin{align}
 \lim_{k\to \infty}\Psi^{\hat \alpha_k}(t,x)= \Psi^*(t,x), \qquad
 \forall (t,x)\in \overline Q_T,  \label{psias}
 \end{align}
  where $\Psi^*=(V^*, V^*_t, V^*_x, V^*_{xx})$. Since
$$
-V_t^{\hat \alpha_k}(t,x) = {\mathbold H}_{\alpha_k}(t,x, V_x^{\hat \alpha_k})+
\frac{\sigma^2}{2}V_{xx}^{\hat \alpha_k},\quad V^{\alpha_k}(T,x)=0,
$$
it follows from \eqref{psias} that
\begin{align}
-V_t^{*}(t,x) = {\mathbold H}_{\alpha}(t,x, V_x^{*})+
\frac{\sigma^2}{2}V_{xx}^{*}, \qquad V^*(T,x)=0. \nonumber
\end{align}
 We have used the fact that ${\mathbold H}_\alpha(t,x,q)$ is continuous in $\alpha$ due to (H5) and condition (C1) of ${\mathcal M}_{[0,T]}$. It is clear that $V^*=V^\alpha$ by uniqueness of the solution of \eqref{VHV}.
So $\Psi^* =\Psi^\alpha$.
Now it follows that
\begin{align}
\lim_{k\to \infty} |\Psi^{\hat \alpha_k}-\Psi^\alpha |_{B_j}=0, \quad
\forall j.
\end{align}

Step 2. Suppose (i) does not hold so that for some $\hat j$ we have
$|\Psi^{\alpha'}-\Psi^\alpha |_{B_{\hat j}}$ does not converge to 0 as $\alpha'\to \alpha$, which implies that there exist some $\epsilon_0>0$ and a sequence  $\{\alpha_k^0\}$ converging to $\alpha$ such that for each $k$,
\begin{align}
|\Psi^{\alpha^0_k}-\Psi^\alpha |_{ B_{\hat j} } \ge \epsilon_0.\label{pspse0}
\end{align}

Step 3.
 Recall that $\{\alpha_k\}$ in Step 1 is arbitrary as long as it converges to $\alpha$. Now we just take $\{\alpha_k\}$ in Step 1 as $\{\alpha_k^0\}$. By Step 1, there exists a subsequence of $\{\alpha^0_k\}$, denoted by
$ \{\hat \alpha^0_k\} $, such that
 %\begin{align}\nonumber
$\lim_{k\to \infty} |\Psi^{\hat \alpha_k^0}-\Psi^\alpha |_{B_{\hat j}}=0$,
 %\end{align}
which  contradicts \eqref{pspse0}. Hence (i) holds.
  \end{proof}

\begin{lemma}\label{lemma:contiBR}
 Assume \emph{(H1)--(H6)}.
For  given $\mu_G(\cdot)\in {\mathcal M}_{[0,T]}$, the best response $\phi_\alpha(t,x|\mu_G(\cdot))$ in \eqref{gmubr}  continuously depends on  $\alpha$. Specifically, for any $\alpha\in [0,1]$,
\begin{align}
\lim_{\alpha'\to \alpha }\phi_{\alpha'}(t,x|\mu_G(\cdot))=\phi_\alpha(t,x|\mu_G(\cdot)),\quad \forall t, x. \label{phiapa}
\end{align}
\end{lemma}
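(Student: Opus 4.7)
The plan is to reduce the claim to a direct composition of two continuity facts already established: the pointwise continuity of $V_x^\alpha$ in $\alpha$ from Lemma \ref{lemma:vxcont}(ii), and the joint continuity of the argmin selector from (H6). The key observation is that the minimizer in the HJB equation \eqref{hjbmug} is, by (H4), the unique element
$$
\phi_\alpha(t,x|\mu_G(\cdot)) = S_\alpha^{\mu_G(t)}\bigl(x,\, V_x^\alpha(t,x)\bigr),
$$
since the inner minimization in \eqref{hjbmug} is precisely the minimization defining $S_\alpha^{\nu_G}(x,q)$ evaluated at $\nu_G = \mu_G(t)$ and $q = V_x^\alpha(t,x)$. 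For each fixed $t$, the admissibility condition (C1) on $\mathcal{M}_{[0,T]}$ guarantees $\mu_G(t) \in C([0,1], \mathcal{P}_1(\mathbb{R}))$, so (H6) applies and gives continuity of $(\alpha,x,q) \mapsto S_\alpha^{\mu_G(t)}(x,q)$.

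Fix $(t,x) \in [0,T]\times \mathbb{R}$ and let $\alpha' \to \alpha$. By Lemma \ref{lemma:vxcont}(ii), $V_x^{\alpha'}(t,x) \to V_x^\alpha(t,x)$, and by Lemma \ref{lemma:HJBc12}(i) these values lie in the compact interval $[-C_1, C_1]$ uniformly in $\alpha'$, which is the range on which (H4) and (H6) are assumed to act. Therefore
$$
\bigl(\alpha', x, V_x^{\alpha'}(t,x)\bigr) \longrightarrow \bigl(\alpha, x, V_x^\alpha(t,x)\bigr)
$$
in $[0,1]\times \mathbb{R}\times [-C_1,C_1]$. Applying the continuity furnished by (H6) to the composition,
$$
\phi_{\alpha'}(t,x|\mu_G(\cdot)) = S_{\alpha'}^{\mu_G(t)}\bigl(x, V_x^{\alpha'}(t,x)\bigr)
\longrightarrow S_\alpha^{\mu_G(t)}\bigl(x, V_x^\alpha(t,x)\bigr) = \phi_\alpha(t,x|\mu_G(\cdot)),
$$
which is the claim \eqref{phiapa}.

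I do not expect any real obstacle in this argument, since both inputs are already in place from the preceding lemmas; the work has been front-loaded into Lemma \ref{lemma:vxcont}. The only points that deserve a sentence of care in the write-up are (a) recalling that $V_x^\alpha$ values stay in a compact $q$-interval, so that (H6) may legitimately be applied on that set, and (b) noting that the continuity of $S_\alpha^{\mu_G(t)}$ in $\alpha$ requires (H5) implicitly through the dependence of $\tilde f_\alpha^*$ and $\tilde l_\alpha^*$ on the section $g_\alpha$—but this is exactly what (H6) has been formulated to provide.
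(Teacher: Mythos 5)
Your proof is correct and follows essentially the same route as the paper's: both write $\phi_\alpha(t,x|\mu_G(\cdot))=S_\alpha^{\mu_G(t)}(x,V_x^\alpha(t,x))$ and combine the continuity of $S_\alpha^{\mu_G(t)}$ in $(\alpha,x,q)$ from (H6) with the convergence of $V_x^{\alpha'}$ to $V_x^\alpha$ from Lemma \ref{lemma:vxcont}. The only cosmetic difference is that you invoke joint continuity pointwise at the limit point, while the paper splits the difference by the triangle inequality and uses uniform continuity of $S_\alpha^{\mu_G(t)}$ on a compact set together with Lemma \ref{lemma:vxcont}(i); both are valid for the pointwise claim \eqref{phiapa}.
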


\begin{proof}
The best response can be written as
\begin{align*}
&\phi_\alpha (t, x|\mu_G(\cdot))= S_\alpha^{\mu_G(t)}(x, V_x^\alpha(t,x)),\\
&\phi_{\alpha'} (t, x|\mu_G(\cdot))= S_{\alpha'}^{\mu_G(t)}(x, V_x^{\alpha'}(t,x)).
\end{align*}
It follows that
\begin{align*}
&|S_\alpha^{\mu_G(t)}(x, V_x^\alpha(t,x))-  S_{\alpha'}^{\mu_G(t)}(x, V_x^{\alpha'}(t,x))|\\
\le&  |S_\alpha^{\mu_G(t)}(x, V_x^\alpha(t,x))-  S_{\alpha}^{\mu_G(t)}(x, V_x^{\alpha'}(t,x))|\\
&+ |S_\alpha^{\mu_G(t)}(x, V_x^{\alpha'}(t,x))-  S_{\alpha'}^{\mu_G(t)}(x, V_x^{\alpha'}(t,x))|.
\end{align*}
Given $\mu_G(\cdot)$ we have the prior upper bound  $\sup_{\alpha, t, x}|V_x^\alpha(t,x)|\le C$.
It suffices to show that \eqref{phiapa} holds for any given $C_0>0$ and  $t\in [0,T]$, $|x|\le C_0$. By (H6), for the given $\mu_G(t)$, $S_\alpha^{\mu_G(t)}(x,q)$ is uniformly continuous in $\alpha \in [0,1]$, $|x|\le C_0 $, $q\in [-C, C]$.
For any $\epsilon>0$,  there exists $\delta>0$ such that  $|\alpha-\alpha'|<\delta$ implies
$\sup_{|x|\le C_0,|q|\le C}|S_\alpha^{\mu_G(t)}(x,  q) - S_{\alpha'}^{\mu_G(t)}(x,q)|\le \epsilon/2$, and moreover,
 $$\sup_{|x|\le C_0}|S_\alpha^{\mu_G(t)}(x, V_x^\alpha(t,x))-
  S_{\alpha}^{\mu_G(t)}(x, V_x^{\alpha'}(t,x))|\le \frac{\epsilon}{2}
 $$
  in view of  Lemma \ref{lemma:vxcont} (i).
 Therefore \eqref{phiapa} holds.
\end{proof}

We proceed to show the existence of  a solution to the GMFG equations
\eqref{MFGPDES} and \eqref{clMV}
in terms of $\{(V^\alpha, \mu_\alpha(\cdot))|\alpha\in [0,1]\}$.
 For $\mu_G\in{\cal M}_{[0,T]} $, denote the mapping
$$
 (\phi_\alpha)_{ \alpha\in [0, 1]}  \coloneqq   \Gamma(\mu_G(\cdot)) ,
$$
where the left hand side is given by \eqref{gmubr} as the set of best responses with respect to $\mu_G(\cdot)$. Next, we combine $(\phi_\alpha)_{ \alpha\in [0, 1]}$ with $\mu_G(\cdot)$ to determine the distribution $m_\alpha$ of the closed-loop state process
\begin{align}
dx_\alpha(t)= \widetilde f[x_\alpha(t), \phi_\alpha(t,x_\alpha(t)|\mu_G(\cdot)), \mu_G(t); g_\alpha]dt +\sigma dw_\alpha(t), \nonumber
\end{align}
where $x_\alpha(0)$ has distribution $\mu_0^x$. The choice of the Brownian motion for $x_\alpha$ is immaterial.
 For $m_\alpha$ above, denote the mapping from $ {\cal M}_{[0,T]}$ to $ {\bf M}_T^G $:
$$
(m_\alpha)_{\alpha \in [0,1]} = \widehat
\Gamma  (\mu_G(\cdot)) .
$$

Define the set
\begin{align}
{\bf M}_T^{G1}\coloneqq \widehat \Gamma  ({\cal M}_{[0,T]})\subset
{\bf M}_T^G. \nonumber
\end{align}
Now the existence analysis  may be
formulated as the problem of finding a fixed point of the form
\begin{align}
m_G =\widehat \Gamma \circ {\rm Marg} (m_G),\label{mfp0}
\end{align}
in case $m_G\in {\bf M}_T^{G1}$. Note that
${\rm Marg} (m_G)  %({\rm Marg}_t(m_G))_{t\in [0,T]}$.
 =\{({\rm Marg}_t
 (m_\alpha))_{\alpha\in [0,1]}, 0\le t\le T \}$.

\begin{remark}
The fixed point problem requires $m_G$ to be from the subset
${\bf M}_T^{G1} $ of ${\bf M}_T^G$. If one simply  looks for $m_G\in {\bf M}_T^G$, the resulting $\mu_G(\cdot)=
{{\rm Marg}}(m_G)$ lacks required properties such as H\"older continuity in (C2), and this  will cause difficulties in establishing Lemma  \ref{lemma:HJBc12} for  the HJB equation.
\end{remark}

\begin{lemma}\label{lemma:unilip}
Under \emph{(H1)--(H6)}, the following assertions hold:

\emph{(i)}  ${\bf M}_T^{G1}\subset  {\bf M}_T^{G0}$.

\emph{(ii)} For any  $m_G\in  {\bf M}_T^{G1} $,
$\mu_G(\cdot)\coloneqq {{\rm Marg}}(m_G)\in {\cal M}_{[0,T]}$.

\emph{(iii)} The best response  $\phi_\alpha(t, x|\mu_G(\cdot))$ with $\mu_G(\cdot)$
given in \emph{(ii)} is Lipschitz continuous in $x$, uniformly with respect to $\alpha\in [0,1]$ and $m_G\in {\bf M}_T^{G1}$.
\end{lemma}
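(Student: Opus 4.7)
For (i), the plan is to couple the closed-loop MV-SDEs defining $m_{\alpha'}$ and $m_\alpha$ via a common Brownian motion $w$ and a common initial state drawn from $\mu_0^x$. Under this coupling I decompose the drift difference $\widetilde f[x_{\alpha'}(s),\phi_{\alpha'}(s,x_{\alpha'}(s)|\mu_G),\mu_G(s);g_{\alpha'}] - \widetilde f[x_\alpha(s),\phi_\alpha(s,x_\alpha(s)|\mu_G),\mu_G(s);g_\alpha]$ into three natural pieces: (a) only $x$ varies, bounded by $C|x_{\alpha'}(s)-x_\alpha(s)|$ via (H2); (b) only the control varies, handled with (H3) and split further into a state-Lipschitz piece using the uniform constant from Lemma \ref{lemma:HJBc12}(ii) plus the pointwise-vanishing remainder $|\phi_{\alpha'}(s,x_\alpha(s)|\mu_G)-\phi_\alpha(s,x_\alpha(s)|\mu_G)|$, which tends to $0$ by Lemma \ref{lemma:contiBR} and is dominated by $2\sup_U|u|$ thanks to (H1); (c) only $g_\alpha$ varies, which vanishes pointwise in $(s,\omega)$ by (H5) applied to the bounded function $\beta\mapsto\int_{\mathbb R} f(x_\alpha(s),\phi_\alpha,z)\mu_\beta(s,dz)$. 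A Gronwall estimate on $E\sup_{r\le t}|x_{\alpha'}(r)-x_\alpha(r)|$, combined with dominated convergence for (b) and (c), yields $E\sup_{s\le T}|x_{\alpha'}(s)-x_\alpha(s)|\to 0$; evaluating $D_T$ against the common-BM coupling then gives $D_T(m_{\alpha'},m_\alpha)\to 0$.

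For (ii), (C1) is immediate from (i) since $W_1(\mu_{\alpha'}(t),\mu_\alpha(t))\le E|x_{\alpha'}(t)-x_\alpha(t)|$ under the same coupling. For (C2), any bounded Lipschitz test function $\phi$ satisfies $|\int\phi\,d\mu_\alpha(t_1)-\int\phi\,d\mu_\alpha(t_2)|\le \mathrm{Lip}(\phi)\cdot E|x_\alpha(t_1)-x_\alpha(t_2)|$, and by the boundedness of $\widetilde f$ from (H1)--(H2) together with a standard Brownian-increment estimate the latter is $\le C|t_1-t_2|^{1/2}$ uniformly in $\alpha$. Hence $\eta=1/2$ works and the resulting $C_h$ depends on $\phi$ only through $\mathrm{Lip}(\phi)$ (and absolute data of the model), as required.

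For (iii), my plan is to track the constants in the proof of Lemma \ref{lemma:HJBc12} and verify that they depend on $\mu_G(\cdot)$ only through the H\"older constant $C_{\mu_G}$ supplied by Lemma \ref{lemma:HL}, which in turn is determined by $C_h$ in (C2) and the Lipschitz constants in (H2). By part (ii), the \emph{same} $C_h$ is valid for every $m_G\in{\bf M}_T^{G1}$, so the bound $\sup_\alpha\sup_{\overline Q_T}|V_{xx}^\alpha|\le C_6$ is uniform both in $\alpha$ and in $m_G\in{\bf M}_T^{G1}$. Since $\phi_\alpha(t,x|\mu_G(\cdot))=S_\alpha^{\mu_G(t)}(x,V_x^\alpha(t,x))$ and (H4) gives a Lipschitz constant for $S_\alpha^{\nu_G}$ on $\mathbb{R}\times[-C_1,C_1]$ that is uniform in $\alpha$ and $\nu_G$, the Lipschitz constant in $x$ of $\phi_\alpha$ is uniform in $\alpha$ and $m_G\in{\bf M}_T^{G1}$.

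The main obstacle I anticipate is in (i): pieces (b) and (c) converge to $0$ only pointwise in $(s,\omega)$, so the passage to the limit inside the time integral and expectation must be justified by dominated convergence \emph{before} Gronwall can be closed. The uniform Lipschitz constant of $\phi_\alpha$ in $x$ from Lemma \ref{lemma:HJBc12}(ii), together with the compactness of $U$ in (H1), supplies the explicit dominants that make this argument go through cleanly.
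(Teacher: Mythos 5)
Your overall strategy for (i)--(iii) is the same as the paper's: couple the closed-loop processes at two vertices with a common Brownian motion and initial state, show the drift difference vanishes in expectation via pointwise convergence plus dominated convergence, close with Gronwall, and then obtain (ii) by $W_1(\mu_{\alpha'}(t),\mu_\alpha(t))\le E|x_{\alpha'}(t)-x_\alpha(t)|$ together with a $|t_1-t_2|^{1/2}$ estimate from the bounded drift, and (iii) by tracking that the constants $C_5$, $C_6$ in the proof of Lemma \ref{lemma:HJBc12} depend on $\mu_G(\cdot)$ only through the uniform H\"older data fixed in (ii). Parts (ii) and (iii) are essentially the paper's argument.

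There is, however, a concrete gap in your decomposition in (i). The drift $\widetilde f[x,u,\mu_G;g_\alpha]=f_0[x,u,\mu_\alpha]+f[x,u,\mu_G;g_\alpha]$ depends on the vertex index not only through the state, the control law, and the graphon section $g_\alpha$, but also through the \emph{local mean field} $\mu_\alpha$ entering the intra-cluster term $f_0$. Your pieces (a) (only $x$ varies), (b) (only the control varies) and (c) (only $g_\alpha$ varies, treated via (H5) applied to the $f$-term) do not telescope to the full difference: the term
$f_0[x,\,u,\,\mu'_{\alpha'}(s)]-f_0[x,\,u,\,\mu'_{\alpha}(s)]$
(the paper's $\delta_1$) is left unhandled, and (c) as you describe it only covers the inter-cluster $f$-term. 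The fix is straightforward and in the spirit of your other pieces: since the input ensemble $\mu'_G$ (with $m_G=\widehat\Gamma(\mu'_G)$) lies in ${\mathcal M}_{[0,T]}$, condition (C1) gives $W_1(\mu'_{\alpha'}(s),\mu'_{\alpha}(s))\to 0$ as $\alpha'\to\alpha$, and (H2) (Lipschitz continuity of $f_0$ in the third argument) plus boundedness of $f_0$ then yields $E\delta_1(s)\to 0$ with an obvious dominant, after which your dominated-convergence-then-Gronwall scheme closes exactly as written. Note also that this is the only place in (i) where membership of $\mu'_G$ in the admissible class ${\mathcal M}_{[0,T]}$ (rather than merely in ${\bf M}_T^G$) is used, so it should be stated explicitly.
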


\begin{proof}
(i) and (ii) For $m_G\in {\bf M}_T^{G1} $,
there exists $\mu'_G\in {\mathcal M}_{[0,T]}$ such that
$
m_G=\widehat \Gamma  (\mu'_G(\cdot)) .
$
%Note that $f_0$ depends continuously on $\alpha$ via $\mu'_\alpha$, and %the %feedback $\phi_\alpha(t,x|\mu_G'(\cdot)) $ has continuous dependence %on %$\alpha$ by Lemma \ref{lemma:contiBR}.
To estimate $D_T(m_\alpha, m_{\bar\alpha})$ and $W_1(\mu_\alpha(t), \mu_{\bar \alpha}(t))$, let $x_\alpha$ and $x_{\bar\alpha}$ be state processes
generated by \eqref{MVNLMinorDyn2}  with $\mu_G'$, the same initial
state and Brownian motion under the control laws $\phi_\alpha(t,x|\mu_G'(\cdot))$
and $\phi_{\bar\alpha}(t,x|\mu_G'(\cdot))$, respectively.
Then $D_T(m_\alpha, m_{\bar\alpha})\le
E\sup_{t\le T}|x_\alpha(t)-x_{\bar\alpha} (t)|$ and $W_1(\mu_\alpha(t), \mu_{\bar \alpha}(t))\le E|x_\alpha(t)-x_{\bar\alpha} (t)   | $.
  Fixing $\bar \alpha$,
  we have
\begin{align}
|x_\alpha(t)-x_{\bar\alpha}(t)|\le & \int_0^t |\widetilde f[x_\alpha(s),
\phi_\alpha(s,x_\alpha(s)|\mu'_G(\cdot)), \mu'_G(s); g_\alpha] \label{xxaab}\\
&\quad - \widetilde f[x_{\bar\alpha}(s), \phi_{\bar\alpha}(s,x_{\bar\alpha}(s)|
\mu'_G(\cdot)), \mu'_G(s); g_{\bar\alpha}]| ds . \nonumber
\end{align}
Denote
\begin{align*}
&\delta_1=| f_0[x_{\bar\alpha}(s),\phi_{\bar\alpha}(s,x_{\bar\alpha}(s)|\mu'_G(\cdot)) ,\mu'_\alpha(s) ]- f_0[x_{\bar\alpha}(s),\phi_{\bar\alpha}(s,x_{\bar\alpha}(s)|\mu'_G(\cdot)) ,\mu'_{\bar\alpha}(s) ]|, \\
&\delta_2= |f[x_{\bar\alpha}(s), \phi_{\bar\alpha}(s,x_{\bar\alpha}(s)|
\mu'_G(\cdot)), \mu'_G(s); g_{\alpha}]- f[x_{\bar\alpha}(s), \phi_{\bar\alpha}(s,x_{\bar\alpha}(s)|
\mu'_G(\cdot)), \mu'_G(s); g_{\bar\alpha}]|.
\end{align*}
Then by \eqref{xxaab} and the Lipschitz continuity in $x$ of $\phi_\alpha$ in Lemma \ref{lemma:HJBc12} (ii), we obtain
\begin{align}
&|x_\alpha(t)-x_{\bar\alpha}(t)|\le C_1\int_0^t |x_\alpha(s)-x_{\bar\alpha}(s)|ds \label{xaab} \\
&+C_2 \int_0^t\{| \phi_{\alpha}(s,x_{\bar\alpha}(s)|
\mu'_G(\cdot)) - \phi_{\bar\alpha}(s,x_{\bar\alpha}(s)|
\mu'_G(\cdot))| +\delta_1(s) +\delta_2(s)\}ds,\nonumber
\end{align}
where $C_2$ depends only on the Lipschitz  constants of $f_0, f$; and $C_1$ does not change with $\alpha$ for the fixed $\mu_G'$.
Since $W_1(\mu'_\alpha(s), \mu'_{\bar\alpha}(s)  )\to 0$ as
$\alpha\to \bar \alpha$, by (H2) $E\delta_1(s)\to 0$ as $\alpha\to \bar\alpha$. By (H5),  we have $ E\delta_2(s)\to 0$ as $\alpha\to \bar \alpha$.
Then using Lemma \ref{lemma:contiBR} and boundedness of the integrand below, we obtain $$
\lim_{\alpha\to \bar \alpha}E\int_0^T\{| \phi_{\alpha}(s,x_{\bar\alpha}(s)|
\mu'_G(\cdot)) - \phi_{\bar\alpha}(s,x_{\bar\alpha}(s)|
\mu'_G(\cdot))| +\delta_1(s) +\delta_2(s)\}ds=0.
$$
By Gronwall's lemma and \eqref{xaab}, it follows that
\begin{align}
\lim_{\alpha\to \bar \alpha}E\sup_{0\le t\le T}|x_\alpha(t)-x_{\bar\alpha}(t)|=0.
\end{align}
Subsequently, as $\alpha\to \bar \alpha$, we obtain
$D_T(m_\alpha, m_{\bar\alpha})\to 0$, which implies (i); in addition,
$W_1(\mu_\alpha(t), \mu_{\bar \alpha}(t))\to 0$,
which  verifies condition (C1) of ${\cal M}_{[0,T]}$ for $\mu_G$. Since each $m_\alpha$ is the distribution
of $x_\alpha$, for $\mu_G(\cdot)$ we  take the H\"older parameter  $\eta=1/2$
and a constant $C_h$
independent of $\mu_G'$ for (C2). So (ii) holds.

(iii) Due to the choice of $\eta$ and $C_h$ for $\mu_G(\cdot)$ in (ii), we may select a fixed constant $C_5$ in \eqref{HC5}, which does not change with $(\alpha, \mu_G(\cdot))$. Subsequently the upper bound $C_6$ in \eqref{vxxc6}  for $|V_{xx}^\alpha|$ does not change with $\alpha\in [0,1], \mu_G(\cdot)\in {\rm Marg}( \widehat\Gamma ({\mathcal M}_{[0,T]}))$.
This ensures a uniform bound for the Lipschitz constant for $x$ in  $\phi_\alpha$.
\end{proof}

We introduce the  sensitivity condition.

(H7)  For $m_G, \bar m_G\in {\bf M}_T^{G1}=  \widehat \Gamma  ({\cal M}_{[0,T]})$, there exists a constant $c_1$ such that
\begin{align}
\label{RA}
\sup_{t,x, \alpha}|\phi_\alpha(t, x|\mu_G(\cdot))- \bar \phi_\alpha(t,x|\bar \mu_G(\cdot))|\le c_1 d(m_G, \bar m_G ),
\end{align}
where the set of control laws $\{\phi_\alpha(t,x|\mu_G(\cdot)), \alpha\in [0,1]\}$ (resp., $\{\bar\phi_\alpha(t,x|\bar\mu_G(\cdot)), \alpha\in [0,1]\}$) is determined by use of
$\mu_G={\rm Marg}(m_G)$ (resp., $\bar\mu_G={\rm Marg}(\bar m_G)$) in the optimal control problem specified by  \eqref{MVNLMinorDyn2}   and \eqref{Jalpha}   with the graphon section $g_\alpha$.

Assumption (H7) is a generalization from the finite type model in
\cite{HMC06} where an illustration via a linear model is presented.
Related sensitivity conditions are  studied in \cite{KY15}.

Let $(\phi_\alpha)_{ \alpha\in [0,1]}$ in \eqref{gmubr} be applied by all agents, where $\mu_G(\cdot) \in  {\mathcal M}_{[0,T]}$.
We consider the following generalized McKean-Vlasov  equation
\begin{align}
dx_\alpha(t) =\widetilde f [x_\alpha(t), \phi_\alpha (t, x_\alpha(t)|\mu_G),\nu_G(t); g_\alpha ] dt +\sigma dw_\alpha(t), \label{xmunu}
\end{align}
where $x_\alpha(0)$ is given with distribution $\mu_0^x$.
For this equation, $\nu_G$ is  part of the solution.
If $\nu_G$ is determined, we have a unique solution $x_\alpha$ on $[0,T]$ which further determines its law as the measure $m_\alpha$ on  $(C_T, {\cal F}_T)$. Note that $m_\alpha$ does not depend on the choice of the standard Brownian motion $w_\alpha$.  We look for $\nu_G\in {\cal M}_{[0,T]}$ to satisfy the  condition:
\begin{align}
{\rm Marg}_t(m_\alpha) = \nu_\alpha(t),\quad \forall\alpha \in [0,1],\ t\in [0,T], \label{Mnu}
\end{align}
i.e., $\nu_\alpha(t)$ is the law of $x_\alpha(t)$ for all $\alpha,t$
(and we say $(x_\alpha)_{0\le \alpha\le 1}$ is consistent with $\nu_G$).

\begin{lemma}
\label{lemma:phinu} Assume \emph{(H1)--(H6)}.
For the best response control law $\phi_\alpha (t, x_\alpha| \mu_G(\cdot))$ in \eqref{gmubr},
where $\mu_G(\cdot) \in  {\mathcal M}_{[0,T]}$,
there exists a  unique
$ \nu_G(\cdot)$ for \eqref{xmunu} satisfying \eqref{Mnu}.
\end{lemma}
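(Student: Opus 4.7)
The plan is a Banach-type fixed-point argument on the admissible class $\mathcal{M}_{[0,T]}$. The key observation is that the feedback law $\phi_\alpha(t,x|\mu_G(\cdot))$ is \emph{frozen} by the given $\mu_G(\cdot)$, so the only place the unknown $\nu_G$ enters \eqref{xmunu} is through the drift slot $\widetilde f[\cdot,\cdot,\nu_G(t);g_\alpha]$. This linearizes the problem enough that a standard Wasserstein-Gronwall contraction argument applies.

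\emph{Step 1 (forward map).} For each $\nu_G\in\mathcal{M}_{[0,T]}$, the coefficient $(t,x)\mapsto \widetilde f[x,\phi_\alpha(t,x|\mu_G(\cdot)),\nu_G(t);g_\alpha]$ is bounded and Lipschitz in $x$ uniformly in $(t,\alpha)$, by (H2) and Lemma \ref{lemma:HJBc12}(ii). Hence \eqref{xmunu} has a unique strong solution $x_\alpha$ on $[0,T]$ whose law $m_\alpha\in\mathbf{M}_T$ does not depend on the choice of $w_\alpha$, and $\Lambda(\nu_G)(t):=(\mathrm{Marg}_t(m_\alpha))_{\alpha\in[0,1]}$ is well defined.

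\emph{Step 2 ($\Lambda$ preserves $\mathcal{M}_{[0,T]}$).} A direct It\^o moment estimate gives
\[
\sup_\alpha E|x_\alpha(t_1)-x_\alpha(t_2)|\le C|t_1-t_2|^{1/2},\qquad t_1,t_2\in[0,T],
\]
with $C$ depending only on $\sup|\widetilde f|$ (finite by (H2)), on $\sigma$ and on $T$, but \emph{not} on $\nu_G$; this yields (C2) with $\eta=1/2$ and $C_h=C\cdot\mathrm{Lip}(\phi)$. Condition (C1) is established exactly as in Lemma \ref{lemma:unilip}(i)--(ii): couple $x_\alpha$ and $x_{\bar\alpha}$ with common initial state and Brownian motion, and apply Gronwall using the uniform-in-$\alpha$ Lipschitz continuity of $\phi_\alpha$ in $x$ (Lemma \ref{lemma:HJBc12}(ii)), the $\alpha$-continuity of $\phi_\alpha$ (Lemma \ref{lemma:contiBR}), the $\alpha$-continuity of $\nu_\alpha$ (condition (C1) for the input $\nu_G$), and the $g$-section continuity from (H5).

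\emph{Step 3 (contraction).} For $\nu_G,\bar\nu_G\in\mathcal{M}_{[0,T]}$, let $x_\alpha,\bar x_\alpha$ solve \eqref{xmunu} with the same $\phi_\alpha$, the same initial state, and the same Brownian motion. By (H2) and Kantorovich--Rubinstein duality,
\[
|\widetilde f[x,u,\nu_G(s);g_\alpha]-\widetilde f[x,u,\bar\nu_G(s);g_\alpha]|\le C\Bigl(W_1(\nu_\alpha(s),\bar\nu_\alpha(s))+\int_0^1 g(\alpha,\beta)W_1(\nu_\beta(s),\bar\nu_\beta(s))\,d\beta\Bigr).
\]
Setting $D(s):=\sup_\beta W_1(\nu_\beta(s),\bar\nu_\beta(s))$, Gronwall applied to $E|x_\alpha(t)-\bar x_\alpha(t)|$ yields
\[
\sup_\alpha W_1(\Lambda(\nu_G)_\alpha(t),\Lambda(\bar\nu_G)_\alpha(t))\le C'\int_0^t D(s)\,ds,\quad t\in[0,T].
\]
Iterating this convolution bound $n$ times produces a prefactor $(C'T)^n/n!$, so $\Lambda^n$ is a strict contraction in the metric $\sup_{t\le T}\sup_\alpha W_1(\cdot,\cdot)$ for $n$ large. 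Uniqueness follows at once; existence follows by a Banach-type argument on $\Lambda^n$ (equivalently, Picard iteration $\nu_G^{(n+1)}=\Lambda(\nu_G^{(n)})$ is Cauchy and converges), the invariance from Step 2 keeping all iterates within $\mathcal{M}_{[0,T]}$.

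\emph{Anticipated main obstacle.} The delicate point is not the Gronwall contraction of Step 3 but the \emph{uniformity} in Step 2: the H\"older constant in (C2) and the $\alpha$-modulus in (C1) for the output $\Lambda(\nu_G)$ must be independent of the input $\nu_G\in\mathcal{M}_{[0,T]}$, since otherwise the Picard iterates might drift out of the admissible class and the Lipschitz estimate of $\phi_\alpha$ in $x$ (which ultimately depends on a $\nu_G$-uniform bound on $V_{xx}^\alpha$, in the spirit of Lemma \ref{lemma:unilip}(iii)) would degrade along the iteration. Once this uniformity is secured via boundedness of $\widetilde f$ and the previously established uniform estimates on $\phi_\alpha$, the remainder is a routine Gronwall argument.
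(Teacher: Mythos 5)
Your argument is correct, and it reaches the conclusion by a route that differs from the paper's in the choice of the fixed-point space. The paper deliberately lifts the problem to path space: it iterates the map $\Phi_{{\bf M}_T^{G0}}$ on ensembles of \emph{path-space laws} $(m_\alpha)_{\alpha\in[0,1]}\in {\bf M}_T^{G0}$ equipped with the truncated Wasserstein metric $\sup_\alpha D_T$, obtains the convolution bound \eqref{DtintDs}, concludes that some power $\Phi^{k_0}$ is a contraction as in Sznitman, and only at the end extracts $\nu_\alpha(t)={\rm Marg}_t(m^*_\alpha)$; uniqueness of $\nu_G$ is then deduced from uniqueness of the path-space fixed point. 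You instead run the Picard/Gronwall contraction directly on the marginal-flow ensembles $\mathcal{M}_{[0,T]}$ under $\sup_{t,\alpha}W_1$, which is legitimate precisely because the drift in \eqref{xmunu} depends on the unknown only through its time-marginals while the control law is frozen by $\mu_G$; your coupling estimate via the Lipschitz property of $f_0,f$ in $y$ is the marginal analogue of the paper's estimate \eqref{hcDt}, and your one-step bound and $n$-fold iteration mirror \eqref{DtintDs}. What each approach buys: the paper's truncated path-space metric sidesteps all moment questions and produces the path laws $m_\alpha$ that are reused verbatim in Lemma \ref{lemma:c2} and the $\epsilon$-Nash analysis, whereas your formulation gives existence and uniqueness of $\nu_G$ in $\mathcal{M}_{[0,T]}$ directly, at the cost of the extra bookkeeping you correctly flag in Step 2: one must verify that $W_1$ distances are finite (bounded drift plus $\mu_0^x\in\mathcal{P}_1(\mathbb{R})$), that the output satisfies (C1)--(C2) with constants independent of the input (your $\eta=1/2$ estimate, exactly as in Lemma \ref{lemma:unilip}(ii)), and that the limit of the iterates stays admissible, which follows since (C1)--(C2) with these uniform constants are preserved under uniform $W_1$ convergence and $(\mathcal{P}_1(\mathbb{R}),W_1)$ is complete. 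One small remark: your ``anticipated obstacle'' about the Lipschitz constant of the feedback degrading along the iteration is moot here, since $V^\alpha$ and hence $\phi_\alpha(\cdot\,|\mu_G(\cdot))$ are determined by the fixed $\mu_G(\cdot)$ and never change with $\nu_G$; the relevant uniformity is only in the SDE estimates, which you do secure.
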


\begin{proof}

In order to solve $(x_\alpha, \nu_G)$ in  \eqref{xmunu}, we specify the law of
the process $x_\alpha$ instead of just its marginal $\nu_\alpha(t)$.
This extends the fixed point idea for treating standard McKean-Vlasov equations \cite{szn91}.

For $(m_\alpha)_{\alpha\in [0,1]}\in{\bf M}_T^{G0} $  , we determine
$\nu_G^1$ according to $\nu^1_\alpha(t)={\rm Marg}_t(m_\alpha)$, which is used in \eqref{xmunu} by taking $\nu_G=\nu_G^1$ to solve $x_\alpha$ on $[0,T]$. Let $m_\alpha^{\rm new}$ denote the law of $x_\alpha$. It in general does not satisfy ${\rm Marg}_t(m_\alpha^{\rm new}) = \nu_\alpha (t)$ for all $t$.
Denote the mapping
$$
(m_\alpha^{\rm new})_{ \alpha\in [0, 1]} =\Phi_{{\bf M}_T^{G0}}( (m_\alpha)_{\alpha\in [0,1]} ).
$$
By (H5) and Lemma \ref{lemma:contiBR},  $\Phi_{{\bf M}_T^{G0}}$ is a mapping from
 ${\bf M}_T^{G0}$ to itself.
Similarly, from $ (\bar m_\alpha)_{ \alpha\in [0,1 ]} \in {{\bf M}_T^{G0}} $
we determine $\bar \nu_G^1$ for \eqref{xmunu} and solve $\bar x_\alpha$ with its law $\bar m^{\rm new}_\alpha$. Denote
 $$(\bar m_\alpha^{\rm new})_{ \alpha\in [0,1]} = \Phi_{ {\bf M}_T^{G0}}( (\bar m_\alpha)_{ \alpha\in [0,1]} ) .$$

If  $h(x,y)$ is a bounded Lipschitz continuous function
with  $|h(x,y)-h(\bar x,\bar y)|\le C_1 |x-\bar x|+C_2(|y-\bar y|\wedge 1)$,  we have
\begin{align}
&\Big|\int h(x, y)g(\alpha, \beta) \nu_\beta^1(t, dy) d\beta-\int h(\bar x, \bar y)g(\alpha, \beta) \nu_\beta^2(t, d\bar y)d\beta\Big| \nonumber  \\
\le & C_1|x-\bar x| + \sup_\beta \Big|\int h(\bar x, y) \nu_\beta^1(t, dy) -\int h(\bar x, \bar y) \nu_\beta^2(t, d \bar y) \Big|  \nonumber   \\
=&  C_1|x-\bar x| + \sup_\beta \Big|\int_{C_T} h(\bar x, X_t(\omega))d m_\beta( \omega) -\int_{C_T} h(\bar x, X_t(\bar \omega) )d\bar m_\beta(\bar \omega) \Big|   \nonumber \\
\le & C_1|x-\bar x|+ C_2\sup_\beta \int_{C_T\times C_T} (|X_t(\omega)-X_t( \bar \omega) |\wedge 1)
d\widehat m_\beta(\omega, \bar \omega),\nonumber
\end{align}
where $X$ is the canonical process, $\omega, \bar \omega\in C_T$, and  $\widehat m_\beta$ is any coupling of $m_\beta$ and $\bar m_\beta$.
Hence
\begin{align}
&|\int h(x, y)g(\alpha, \beta) \nu_\beta^1(t, dy) d\beta-\int h(\bar x, \bar y)g(\alpha, \beta) \nu_\beta^2(t, d\bar y)d\beta| \nonumber  \\
\le \ & C_1|x-\bar x|+C_2 \sup_\beta D_t (m_\beta,\bar m_\beta ).
  \label{hcDt}
\end{align}

By (H2), (H3), the uniform Lipschitz continuity of $\phi_\alpha$ in $x$ by Lemma \ref{lemma:HJBc12} (ii), and \eqref{hcDt}, we obtain
\begin{align*}
&|\widetilde f [x_\alpha, \phi_\alpha (t, x_\alpha|\mu_G),\nu_G^1(t); g_\alpha ]-\widetilde f [\bar x_\alpha, \phi_\alpha (t, \bar x_\alpha|\mu_G), \nu_G^2(t); g_\alpha ] |\\
\le &C_1(|x_\alpha- \bar x_\alpha |\wedge  1) + C_2 \sup_\beta
D_t (m_\beta,\bar m_\beta ) .
\end{align*}
Hence by \eqref{xmunu},
\begin{align}
\sup_{s\le t}|x_\alpha (s)-\bar x_\alpha (s)|&\le
C_1\int_0^t |x_\alpha(s)- \bar x_\alpha(s) |\wedge  1ds \nonumber \\
& + C_3 \int_0^t \sup_\beta |D_s (m_\beta,\bar m_\beta ) |ds. \nonumber
\end{align}
Therefore, by Gronwall's lemma,
\begin{align}
\sup_{s\le t}|x_\alpha (s)-\bar x_\alpha (s)|\wedge 1\le
 C_4\int_0^t \sup_\beta |D_s (m_\beta,\bar m_\beta ) |ds,\nonumber
\end{align}
which combined with the definition of the Wasserstein metric $D_t(\cdot, \cdot)$ implies that
\begin{align}
\sup_\beta |D_t (m^{\rm new}_\beta,\bar m^{\rm new}_\beta ) |\le  C_4\int_0^t \sup_\beta |D_s (m_\beta,\bar m_\beta ) |ds.\label{DtintDs}
\end{align}
By iterating \eqref{DtintDs} as in \cite[p. 174]{szn91}, we can show that for a sufficiently large $k_0$, $ \Phi_{{\bf M}_T^{G0}  }^{k_0} $ is a contraction.
We can further show that $\{\Phi_{{\bf M}_T^{G0} }^k (m_G), k\ge 1\}$ is a Cauchy sequence, and we obtain a unique fixed point % since
%$\Phi_{{\bf M}_T^{G0}  }^k$ is a
%contraction for each sufficiently large $k$.
%Let
 $m^*_G$ for  $\Phi_{{\bf M}_T^{G0} }$.
Then we obtain a solution of \eqref{xmunu} by taking
$\nu_\alpha (t)= {\rm Marg}_t(m^*_\alpha)$.
If there are two different solutions with $\nu_G\ne \nu_G'$,
we can  derive  a contradiction by using uniqueness of the fixed point of $\Phi_{{\mathbf M}_T^{G0}}$.
\end{proof}

Now we consider two sets of best response control laws $(\phi_\alpha (t, x_\alpha|\mu_G))_{ \alpha\in [0, 1]} $ and $(\bar \phi_\alpha (t,x_\alpha|\bar \mu_G))_{ \alpha \in [0, 1]}$, where $\mu_G={\rm Marg}(m_G), \bar \mu_G={\rm Marg}(\bar m_G)$ for   $m_G, \bar m_G\in {\bf M}_T^{G1}$
(then clearly $\mu_G, \bar \mu_G \in {\mathcal M}_{[0,T]} $), and use Lemma \ref{lemma:phinu} to solve $(x_\alpha, \nu_G) $ and $(x'_\alpha, \bar \nu_G) $ from the generalized MV-SDEs
\begin{align}
&dx_\alpha =\widetilde f [x_\alpha, \phi_\alpha (t, x_\alpha|\mu_G),\nu_G(t); g_\alpha ] dt +\sigma dw_\alpha(t), \label{xmunu1}\\
&dx_\alpha' =\widetilde f [x_\alpha', \bar \phi_\alpha (t, x'_\alpha|\bar\mu_G),
\bar\nu_G(t); g_\alpha ] dt +\sigma dw_\alpha(t), \label{xmunub}
\end{align}
where $x'_\alpha(0)=x_\alpha (0)$ is given.
Let $m_\alpha^{\rm mv}$ (resp., $\bar m_\alpha^{\rm mv}$) denote the  law of $ x_\alpha$ (resp., $x'_\alpha$).
The following lemma is a generalization of  \cite[Lemma 9]{HMC06}
to the graphon network case.
\begin{lemma} \label{lemma:c2}
For \eqref{xmunu1} and \eqref{xmunub}
there exists a constant $c_2$ independent of $(m_G, \bar m_G)$ such that
\begin{align} %\label{Dc2}
\sup_\alpha D_T(m_\alpha^{\rm mv}, \bar m_\alpha^{\rm mv}) \le c_2 \sup_{t,x,\alpha}|
\phi_\alpha(t,x|\mu_G(\cdot))-\bar \phi_\alpha(t,x|\bar \mu_G(\cdot))|.
\nonumber
\end{align}
\end{lemma}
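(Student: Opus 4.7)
The plan is to bound $|x_\alpha(t) - x_\alpha'(t)|$ pathwise via Gronwall and then translate this into a bound on $D_T$. Since both SDEs \eqref{xmunu1} and \eqref{xmunub} share the same initial condition $x_\alpha(0) = x_\alpha'(0)$ and the same Brownian motion $w_\alpha$, the diffusion terms cancel upon subtraction. Write
$$
\epsilon \defas \sup_{t,x,\alpha}|\phi_\alpha(t,x|\mu_G(\cdot)) - \bar\phi_\alpha(t,x|\bar\mu_G(\cdot))|,\qquad \Delta_\alpha(t)\defas E\sup_{s\le t}|x_\alpha(s)-x_\alpha'(s)|\wedge 1.
$$
The joint law of $(x_\alpha,x_\alpha')$ is a coupling of $m_\alpha^{\rm mv}$ and $\bar m_\alpha^{\rm mv}$, so $D_t(m_\alpha^{\rm mv},\bar m_\alpha^{\rm mv})\le \Delta_\alpha(t)$. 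Hence the target estimate follows once I show $\sup_\alpha \Delta_\alpha(T)\le c_2\epsilon$ with $c_2$ independent of $(m_G,\bar m_G)\in{\bf M}_T^{G1}$.

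I would then estimate the drift difference by a three-term telescoping through the intermediate triples $(x_\alpha',\phi_\alpha(t,x_\alpha'|\mu_G),\nu_G(t))$ and $(x_\alpha',\bar\phi_\alpha(t,x_\alpha'|\bar\mu_G),\nu_G(t))$. The first difference is controlled by $C|x_\alpha - x_\alpha'|$ using (H2), (H3), and the $x$-Lipschitz property of $\phi_\alpha$ from Lemma \ref{lemma:HJBc12}(ii); the second by $C\,\epsilon$ directly from the definition of $\epsilon$ and (H3); and the third, which compares $\widetilde f$ evaluated at the ensembles $\nu_G(t)$ and $\bar\nu_G(t)$ for fixed state and control, by $C\sup_\beta D_t(m_\beta^{\rm mv},\bar m_\beta^{\rm mv})$ via the path-to-marginal estimate \eqref{hcDt} applied to $\nu_\beta(t)={\rm Marg}_t(m_\beta^{\rm mv})$ and $\bar\nu_\beta(t)={\rm Marg}_t(\bar m_\beta^{\rm mv})$.

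Integrating in time, truncating at $1$, taking $\sup_{r\le t}$, and then expectations gives
$$
\Delta_\alpha(t)\le C_1\int_0^t \Delta_\alpha(s)\,ds + C_2\int_0^t \sup_\beta D_s(m_\beta^{\rm mv},\bar m_\beta^{\rm mv})\,ds + C_3 T\epsilon.
$$
Using $D_s(m_\beta^{\rm mv},\bar m_\beta^{\rm mv})\le \Delta_\beta(s)$ and taking $\sup_\alpha$ on both sides yields a scalar Gronwall inequality for $t\mapsto \sup_\alpha \Delta_\alpha(t)$, whence $\sup_\alpha \Delta_\alpha(T)\le C_3Te^{(C_1+C_2)T}\epsilon =: c_2\epsilon$.

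The main obstacle is bookkeeping rather than novelty: I must verify that the constants $C_1,C_2,C_3$ do not depend on the particular pair $(m_G,\bar m_G)\in{\bf M}_T^{G1}$. For the Lipschitz constant of $\phi_\alpha$ in $x$ this is exactly the \emph{uniform} bound supplied by Lemma \ref{lemma:unilip}(iii), which holds because the measure ensembles produced by $\widehat\Gamma$ admit a common H\"older parameter $\eta=1/2$ and common $C_h$ in (C2); the other constants come from (H2), (H3), the boundedness of $g$, and the fixed $T$, so they are automatically uniform. Once this uniformity is secured, the Gronwall step closes the argument cleanly.
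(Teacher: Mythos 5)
Your proposal is correct and follows essentially the same route as the paper's proof: couple the two SDEs through the common initial state and Brownian motion, telescope the drift difference into state-change, control-law-change, and measure-ensemble-change pieces (the latter via the estimate \eqref{hcDt} and the uniform $x$-Lipschitz bound of Lemma \ref{lemma:unilip}(iii)), and close with Gronwall and the coupling bound $D_t(m_\alpha^{\rm mv},\bar m_\alpha^{\rm mv})\le E\sup_{s\le t}|x_\alpha(s)-x_\alpha'(s)|\wedge 1$. The only difference is organizational: you run a single Gronwall on $\sup_\alpha\Delta_\alpha(t)$ after replacing $\sup_\beta D_s$ by $\sup_\beta\Delta_\beta(s)$, whereas the paper first applies Gronwall pathwise and then again to the resulting integral inequality \eqref{Dtsp}; both yield a constant $c_2$ of the form $CTe^{CT}$ independent of $(m_G,\bar m_G)$.
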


\begin{proof} For \eqref{xmunu1}--\eqref{xmunub},
denote
\begin{align*}
\Delta_s = \ & \widetilde f [x_\alpha(s), \phi_\alpha (s, x_\alpha(s)|\mu_G),\nu_G(s); g_\alpha ] -  \widetilde f [x_\alpha'(s), \bar \phi_\alpha (s, x'_\alpha(s)|\bar\mu_G),\bar\nu_G(s); g_\alpha ] .
\end{align*}
We have
\begin{align}
x_\alpha(t)-x_\alpha'(t)=\int_0^t \Delta_sds. \label{xdels}
\end{align}
Noting  $\nu_\alpha(t)={\rm Marg}_t (m_\alpha^{\rm mv})$ and $\bar \nu_\alpha(t)={\rm Marg}_t (\bar m_\alpha^{\rm mv})$,   we have
\begin{align}
|\Delta_s| \le & |\widetilde f [x_\alpha(s), \phi_\alpha (s, x_\alpha(s)|\mu_G),\nu_G(s); g_\alpha ] -  \widetilde f [x_\alpha'(s),  \phi_\alpha (s, x'_\alpha(s)|\mu_G),\bar\nu_G(s); g_\alpha ]| \nonumber \\
&\hskip -0.5cm +| \widetilde f [x_\alpha'(s),  \phi_\alpha (s, x'_\alpha(s)|\mu_G),\bar\nu_G(s); g_\alpha ] -\widetilde f [x_\alpha'(s),  \bar \phi_\alpha (s, x'_\alpha(s)|\bar\mu_G),\bar\nu_G(s); g_\alpha ]|\nonumber \\
\le & C_1|x_\alpha (s)-x_\alpha'(s)  | + C_2 \sup_\beta  D_s (m_\beta^{\rm mv}, \bar m^{\rm mv}_\beta)  \nonumber \\
&+C_3 \sup_{t,x}|
\phi_\alpha(t,x|\mu_G(\cdot))-\bar \phi_\alpha(t,x|\bar \mu_G(\cdot))|, \label{delc12}
\end{align}
where $C_1$, $C_2$ and $C_3$ do not depend on $(\alpha,m_G, \bar m_G)$.
The difference term on the first line is estimated by the method in \eqref{hcDt}.
We have used the fact  that  $\phi_\alpha$ is uniformly Lipschitz in $x$ by Lemma \ref{lemma:unilip} (iii).
Therefore, by \eqref{xdels}--\eqref{delc12},
\begin{align*}
|x_\alpha(t)-x_\alpha'(t)| \le& \int_0^t \Big[C_1|x_\alpha (s)-x_\alpha'(s)  | + C_2\sup_\beta D_s (m^{\rm mv}_\beta, \bar m^{\rm mv}_\beta)\Big] ds \\
&+C_3 t \sup_{t,x}|
\phi_\alpha(t,x|\mu_G(\cdot))-\bar \phi_\alpha(t,x|\bar \mu_G(\cdot))|.
\end{align*}
 By Gronwall's lemma, we obtain
 \begin{align*}
\sup_{0\le s\le t} |x_\alpha(s)-x_\alpha'(s)|\wedge 1 &\le e^{C_1t}C_2 \int_0^t \sup_\beta  D_s (m^{\rm mv}_\beta, \bar m^{\rm mv}_\beta) ds\\
&+ e^{C_1t}C_3t \sup_{t,x}|
\phi_\alpha(t,x|\mu_G(\cdot))-\bar \phi_\alpha(t,x|\bar \mu_G(\cdot))|,
 \end{align*}
which again by the definition of the metric $D_t(\cdot, \cdot)$ leads to
\begin{align}
\sup_\alpha  D_t (m^{\rm mv}_\alpha, \bar m^{\rm mv}_\alpha) \le & e^{C_1t}C_2 \int_0^t \sup_\alpha  D_s (m^{\rm mv}_\alpha, \bar m^{\rm mv}_\alpha) ds  \label{Dtsp} \\
&+  e^{C_1t}C_3 t \sup_{t,x,\alpha}|
\phi_\alpha(t,x|\mu_G(\cdot))-\bar \phi_\alpha(t,x|\bar \mu_G(\cdot))|.\nonumber
\end{align}
The lemma follows from applying Gronwall's lemma to \eqref{Dtsp}.
\end{proof}

\subsection{Existence
Theorem}

We state the main result on the
existence and uniqueness of solutions to the GMFG equation system.
We introduce a contraction condition:

 (H8)  $c_1c_2<1$, where $c_1$ is the constant in the sensitivity condition (H7) and $c_2$ is specified in Lemma \ref{lemma:c2}.

\begin{remark}
By SDE estimates, one can obtain refined bound information on $c_2$. When the  coupling effect is weak or $T$ is small, a small value for $c_2$ can be obtained.
\end{remark}

\begin{remark}  For linear models, a verification of the contraction condition can be done under reasonable model parameters, as in \cite{HMC06}.
\end{remark}

 \begin{theorem} \label{theorem:exist}
Under \emph{(H1)--(H8)},
there exists a unique solution $(V^\alpha, \mu_\alpha(\cdot))_{\alpha \in [0,1]}$ to the GMFG equations \eqref{MFGPDES} and \eqref{clMV}, which \emph{(i)} gives the feedback control  best response \emph{(BR)} strategy
$\varphi(t,x_\alpha| {\mu}_{G}(\cdot); g_\alpha)$
depending only upon the agent's state and the ensemble $\mu_G$ of local mean fields   (i.e. $(x_\alpha, {\mu}_{G})$),  and \emph{(ii)}
generates a Nash equilibrium.
\end{theorem}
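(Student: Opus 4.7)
The plan is to construct the solution as the unique fixed point of a contraction map on an appropriate subset of $(\mathbf{M}_T^G, d)$, and then to read off $V^\alpha$ and the Nash property from the fixed point.

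First I would define, for each $m_G \in \mathbf{M}_T^{G1}$, the map $\mathcal{T}(m_G) \coloneqq (m_\alpha^{\mathrm{mv}})_{\alpha \in [0,1]}$ by the following three-step construction: (a) form $\mu_G(\cdot) \coloneqq \mathrm{Marg}(m_G)$, which lies in $\mathcal{M}_{[0,T]}$ by Lemma \ref{lemma:unilip}(ii); (b) solve the HJB equation \eqref{hjbmug} with this $\mu_G(\cdot)$ by Lemma \ref{lemma:HJBc12} to obtain the value function $V^\alpha$ and the best response $\phi_\alpha(t,x|\mu_G(\cdot))$, which is uniformly Lipschitz in $x$ by Lemma \ref{lemma:unilip}(iii); (c) apply Lemma \ref{lemma:phinu} to the generalized MV-SDE \eqref{xmunu} driven by $\phi_\alpha(\cdot|\mu_G(\cdot))$ and obtain the unique self-consistent $(x_\alpha, \nu_G)$, letting $m_\alpha^{\mathrm{mv}}$ be the path law of $x_\alpha$. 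At any fixed point $m_G^* = \mathcal{T}(m_G^*)$ one has $\nu_G^* = \mathrm{Marg}(m_G^*) = \mu_G^*$, so the self-consistent drift measure coincides with the input measure used to generate the best response, which is exactly the GMFG consistency condition \eqref{MFGPDES}--\eqref{clMV} with $\mu_\alpha(t) = \mathrm{Marg}_t(m_\alpha^*)$.

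Next I would verify the contraction. Given $m_G, \bar m_G \in \mathbf{M}_T^{G1}$, with associated best responses $\phi_\alpha$ and $\bar\phi_\alpha$, the sensitivity condition (H7) gives
\begin{equation*}
\sup_{t,x,\alpha}\bigl|\phi_\alpha(t,x|\mu_G(\cdot)) - \bar\phi_\alpha(t,x|\bar\mu_G(\cdot))\bigr| \le c_1\, d(m_G, \bar m_G),
\end{equation*}
while Lemma \ref{lemma:c2} applied to \eqref{xmunu1}--\eqref{xmunub} yields
\begin{equation*}
d\bigl(\mathcal{T}(m_G), \mathcal{T}(\bar m_G)\bigr) = \sup_\alpha D_T(m_\alpha^{\mathrm{mv}}, \bar m_\alpha^{\mathrm{mv}}) \le c_2 \sup_{t,x,\alpha}|\phi_\alpha - \bar\phi_\alpha|.
\end{equation*}
Combining the two bounds and invoking (H8) gives $d(\mathcal{T}(m_G), \mathcal{T}(\bar m_G)) \le c_1 c_2\, d(m_G, \bar m_G)$ with $c_1 c_2 < 1$. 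Since $\mathbf{M}_T^{G0}$ is complete by Lemma 3.1 and $\mathcal{T}$ produces path laws of Itô processes with bounded drift and uniform Lipschitz feedback (so that $\mathcal{T}(\mathbf{M}_T^{G1}) \subset \mathbf{M}_T^{G0}$ with marginals in $\mathcal{M}_{[0,T]}$ satisfying (C2) with a uniform H\"older constant as in Lemma \ref{lemma:unilip}(ii)--(iii)), the Banach fixed point theorem, applied to the closure of the iterates $\{\mathcal{T}^k(m_G^{(0)})\}$ for any $m_G^{(0)} \in \mathbf{M}_T^{G1}$, produces a unique fixed point $m_G^*$. Uniqueness of $(V^\alpha, \mu_\alpha^*)$ then follows: uniqueness of $m_G^*$ gives uniqueness of $\mu_G^*$, which via Lemma \ref{lemma:HJBc12}(i) gives uniqueness of $V^\alpha$ in $C_b^{1,2}(\overline{Q}_T)$.

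Finally, the Nash property is essentially tautological given the fixed-point construction. At $m_G^*$, each $\varphi(t, x_\alpha|\mu_G^*; g_\alpha) = \phi_\alpha(t, x_\alpha|\mu_G^*(\cdot))$ is, by Lemma \ref{lemma:HJBc12}(ii), the pointwise minimizer in \eqref{MFGPDES} and hence the optimal feedback control against the prescribed ensemble $\mu_G^*$ in the control problem \eqref{MVNLMinorDyn2}--\eqref{Jalpha}; the fixed-point identity $\mu_G^* = \mathrm{Marg}(m_G^*)$ ensures that when every $\alpha$-agent applies $\varphi(\cdot|\mu_G^*; g_\alpha)$, the resulting law of $x_\alpha(t)$ reproduces $\mu_\alpha^*(t)$ for all $\alpha$, so no agent can gain by unilateral deviation in the infinite-population graphon game. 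The main obstacle is technical rather than conceptual: carefully choosing the domain so that $\mathcal{T}$ maps it into itself with the uniform H\"older/Lipschitz constants needed to apply Lemmas \ref{lemma:HJBc12}, \ref{lemma:contiBR}, \ref{lemma:phinu}, and \ref{lemma:c2} with constants independent of the iterate; here Lemma \ref{lemma:unilip}(iii) is the key because it gives the uniform Lipschitz constant for $\phi_\alpha$ in $x$, without which the sensitivity bound in Lemma \ref{lemma:c2} could degrade along the iteration.
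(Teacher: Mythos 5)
Your proposal follows essentially the same route as the paper's proof: the fixed point problem $m_G = \widehat\Gamma \circ \mathrm{Marg}(m_G)$ on $\mathbf{M}_T^{G1}$, with the iterate generated through the self-consistent MV-SDE of Lemma \ref{lemma:phinu}, the contraction obtained by combining the sensitivity bound (H7) with Lemma \ref{lemma:c2} under (H8), and the Nash property read off from the best-response optimality of $\phi_\alpha$ at the fixed point. The supporting lemmas you invoke (Lemmas \ref{lemma:HJBc12}, \ref{lemma:unilip}, \ref{lemma:phinu}, \ref{lemma:c2}) are exactly those the paper uses, so the argument is correct and not materially different.
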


\begin{proof}
Step 1 --
We return to the fixed point equation \eqref{mfp0}, which is redisplayed below:
\begin{align}
m_G =\widehat \Gamma \circ {\rm Marg} (m_G),
%(m_\alpha)_{\alpha\in [0,1]} =
%\widehat \Gamma  \{(\mbox{Marj}_t (m_\alpha))_{\alpha\in [0,1]}, 0\le t\le %T \}.
\label{mfp}
\end{align}
where
 %\begin{align}
$m_G=(m_\alpha)_{\alpha \in [0,1]}\in {\bf M}_T^{G1}  $.  %\label{mfrom}
 %\end{align}
For $m_G\in {\bf M}_T^{G1}$,  the H\"older continuity in $t$ of the regenerated $\mu_G(\cdot)={\rm Marg}(m_G)$ can be checked by elementary SDE estimates by adapting the proof of \cite[Lemma 7]{HMC06}.

Step 2 -- Take a general $m_G\in  {\bf M}_T^{G1}
 $ to determine $\mu_G={\rm Marg} (m_G)$ and $\phi_\alpha (t,x_\alpha|\mu_G(\cdot)) $.
When $\bar m_G \in  {\bf M}_T^{G1} $ is used, we determine $\bar\mu_G$ and $\bar \phi_\alpha(t,x_\alpha|\bar\mu_G(\cdot))$.
Once the set of strategies
$(\phi_\alpha)_{\alpha\in [0,1]}$ is applied to the generalized MV equation \eqref{xmunu}, by Lemma 3.8, we may
solve  for $(x_\alpha, \nu_G(\cdot))$ such that $x_\alpha$ has the law
$m_\alpha^{\rm new}$ and  ${\rm Marg}_t(m_\alpha^{\rm new})=\nu_\alpha(t)$. This is done in parallel for $\bar m_G$ to generate $\bar m_\alpha^{\rm new}$. We accordingly determine $m_G^{\rm new}$ and $\bar m_G^{\rm new}$.

Step 3 -- By \eqref{RA} and  Lemma \ref{lemma:c2},
 we obtain
\begin{align}
\sup_\alpha D_T(m_\alpha^{\rm new}, \bar m_\alpha^{\rm new})\le
c_1c_2 d(m_G, \bar m_G),\nonumber
\end{align}
which implies
$$
d(m_G^{\rm new}, \bar m_G^{\rm new})\le c_1 c_2 d(m_G, \bar m_G).
$$
Based on the above contraction property, we construct a Cauchy sequence in the complete metric space ${\bf M}_T^G$
by iterating with $m_G$ and establish
existence  of a solution to the GMFG equation system.
To show uniqueness, suppose $m_G$ and $\tilde m_G$ are two fixed points to \eqref{mfp}. We obtain $d(m_G, \tilde m_G)\le c_1 c_2d(m_G,
\tilde m_G)$, which implies $m_G=\tilde m_G$.

The Nash equilibrium property follows from the  best response property of $\phi_\alpha$ for a given vertex $\alpha$.
\end{proof}

\subsection{An Example  on Lipschitz feedback}

The main analysis in Section \ref{sec:gmfg} relies on (H4) to ensure Lipschitz
feedback. We provide a concrete model to
check this assumption.
\begin{example} \label{example:fl}
The dynamics and cost have
\begin{align*}
&f_0(x,u,y)= f_0(x,y) u, \quad f(x,u,y)= f(x,y) u,\\
&l_0(x,u,y)=  l_1(x,y)+{l}_2(x,y)u^2 , \quad l(x,u,y) =l_3(x,y)+ l_4(x,y)u^2,
\end{align*}
where $x,y\in \mathbb{R}$ and  $u\in U=[a,b]$. The functions   $f_0$, $f$, $l_1$, $l_2$, $l_3$, $l_4$ satisfy (H1)--(H3), and there exists $c_0>0$ such that  $l_2,l_4\ge c_0$  for all $x,y$.
\end{example}

Given $\nu_G\in C([0,1],{\mathcal P}_1({\mathbb R}))$, we check
the minimizer of
\begin{align*}
S_\alpha^{\nu_G}(x,q)=\arg \min_{u\in U}\{ q(f_0[x,\nu_\alpha]+f[x,\nu_G; g_\alpha])u + (l_2[x,\nu_\alpha] +l_4[x,\nu_G; g_\alpha])u^2\},
\end{align*}
where $x,q\in \mathbb{R}$.

\begin{proposition} \label{prop:lipu}
Given any compact interval ${\mathcal I}$,
$S_\alpha^{\nu_G}( x,q)$ in Example \ref{example:fl}  is a singleton and Lipschitz continuous in $(x,q)$, where $x\in \mathbb{R}$ and $q\in {\cal I}$, uniformly with respect to $(\nu_G, \alpha)$.
\end{proposition}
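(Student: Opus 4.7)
The plan is to identify the minimizer explicitly via the first-order condition and then verify that it is a $\mathrm{clip}$-composition of a ratio whose Lipschitz constants are manifestly uniform in $(\nu_G,\alpha)$. For brevity write
\begin{align*}
A(x,\nu_G,\alpha) &\defas f_0[x,\nu_\alpha] + f[x,\nu_G; g_\alpha], \\
B(x,\nu_G,\alpha) &\defas l_2[x,\nu_\alpha] + l_4[x,\nu_G; g_\alpha].
\end{align*}
Then the objective in the definition of $S_\alpha^{\nu_G}(x,q)$ is $B\,u^2 + qA\,u$, a strictly convex quadratic in $u$ since $B \ge 2c_0 > 0$. The unconstrained minimizer is $u^*(x,q) = -qA/(2B)$, and over the compact interval $U=[a,b]$ the (unique) minimizer is
\[
S_\alpha^{\nu_G}(x,q) = \Pi_{[a,b]}\Big(-\tfrac{q A(x,\nu_G,\alpha)}{2B(x,\nu_G,\alpha)}\Big),
\]
where $\Pi_{[a,b]}$ is projection onto $[a,b]$. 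This gives the singleton claim; since $\Pi_{[a,b]}$ is $1$-Lipschitz, it remains to show that $(x,q)\mapsto -qA/(2B)$ is Lipschitz with constants independent of $(\nu_G,\alpha)$.

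First I would collect the uniform bounds on $A,B$. By (H2) the functions $f_0,f,l_2,l_4$ are bounded and Lipschitz in $(x,y)$; averaging against $\nu_\alpha$ and against $g(\alpha,\cdot)\nu_\beta\,d\beta$ (where $g\le 1$) preserves both properties with the \emph{same} constants, so there exist $M$ and $L$ independent of $(\nu_G,\alpha)$ such that $|A|\le M$, $|B|\le M$, $c_0 \le B$, and $A(\cdot,\nu_G,\alpha)$, $B(\cdot,\nu_G,\alpha)$ are $L$-Lipschitz in $x$.

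Next, for $q,q'\in \mathcal I=[\underline q,\bar q]$ and $x,x'\in\mathbb R$, I would split
\begin{align*}
\Big|\tfrac{qA(x)}{2B(x)}-\tfrac{q'A(x')}{2B(x')}\Big|
&\le \tfrac{|q-q'|\,|A(x)|}{2B(x)} + \tfrac{|q'|}{2}\Big|\tfrac{A(x)}{B(x)}-\tfrac{A(x')}{B(x')}\Big|,
\end{align*}
where I have suppressed the fixed arguments $(\nu_G,\alpha)$. The first term is bounded by $(M/(2c_0))|q-q'|$. For the second, write $\tfrac{A(x)}{B(x)}-\tfrac{A(x')}{B(x')} = \tfrac{(A(x)-A(x'))B(x') - A(x')(B(x)-B(x'))}{B(x)B(x')}$, which in absolute value is at most $\tfrac{L\,M + M\,L}{c_0^2}|x-x'|$. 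Multiplying by $|q'|/2 \le \max(|\underline q|,|\bar q|)/2$ gives a Lipschitz constant in $x$ depending only on $M, L, c_0, \mathcal I$. Hence $-qA/(2B)$ is Lipschitz in $(x,q)$ on $\mathbb R\times\mathcal I$ with constants independent of $(\nu_G,\alpha)$, and composing with $\Pi_{[a,b]}$ yields the same bound for $S_\alpha^{\nu_G}$.

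There is no substantive obstacle; the only point that requires care is the uniform transfer of boundedness and Lipschitz constants from $f_0,f,l_2,l_4$ to the mean-field-averaged quantities $A,B$, and the uniform positivity $B\ge c_0$, which prevents the denominator from degenerating. Together these reduce the proof to the elementary Lipschitz estimate on a ratio given above.
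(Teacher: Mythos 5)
Your proof is correct and takes essentially the same route as the paper: the paper writes the unique minimizer as $\Theta(q\,h_{\alpha,\nu_G}(x))$ with $\Theta$ the clip map onto $[a,b]$ and $h_{\alpha,\nu_G}=-A/(2B)$, which is exactly your $\Pi_{[a,b]}\bigl(-qA/(2B)\bigr)$, and your explicit quotient-rule estimate just fills in the ``elementary estimates'' the paper leaves implicit. One tiny slip: since the $l_4$ term is averaged against $g(\alpha,\beta)\,d\beta\le 1$, you can only assert $B\ge c_0$ (not $B\ge 2c_0$), but strict convexity needs only $B>0$ and your quantitative bounds already use $B\ge c_0$, so nothing in the argument changes.
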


\begin{proof}
Consider the function  $\Phi(u)= u^2 -2s u$, where $u\in U$ and   $s$ is a parameter. Its minimum is attained at the unique point
$$
\hat u = \Theta(s)\coloneqq
\begin{cases}
a & \mbox{if}\quad  s\le a,\\
s & \mbox{if} \quad a<s<b, \\
b &\mbox{if} \quad s\ge b.
\end{cases}
$$
Denote the function
$$
h_{\alpha, \nu_G}(x)=-\frac{f_0[x,\mu_\alpha]+f[x,\nu_G; g_\alpha]}{2(l_2[x,\mu_\alpha] +l_4[x,\nu_G; g_\alpha])} .
$$
By elementary estimates we can show
\begin{align}
|h_{\alpha, \nu_G} (x)- h_{\alpha, \nu_G} (y)| \le C_0 |x-y|,\nonumber
\end{align}
where $C_0$ does not depend on $(\nu_G, \alpha)$.
 We have
\begin{align*}
S_\alpha^{\nu_G}(x,q) &= \arg\min_u ( u^2- 2qh_{\alpha, \nu_G}(x)  u)\\
   &= \Theta ({qh_{\alpha, \nu_G}(x)}).
\end{align*}
It is clear that $S_\alpha^{\nu_G}(x,q)$ is a continuous function of $(x,q)$.
For $(x_i,q_i)\in\mathbb{R}\times  {\mathcal I}$, $i=1,2$,
\begin{align}
&|S_\alpha^{\nu_G}(x_1,q_1)-S_\alpha^{\nu_G}(x_2, q_2)| \nonumber\\
&\le  {\rm Lip} (\Theta)|q_1h_{\alpha, \nu_G}(x_1)-q_2h_{\alpha, \nu_G}(x_2) | \nonumber  \\
&\le   {\rm Lip}(\Theta) \Big( |q_1-q_2|  \sup_x| h_{\alpha, \nu_G}(x)| +C_0|x_1-x_2| |q_2|  \Big). \nonumber
\end{align}
In fact, the Lipschitz constant ${\rm Lip}(\Theta)=1$. Note that there exists a fixed constant $C$ such that $ |h_{\alpha, \nu_G}(x)|\le C $ for all $\alpha, \nu_G$.
This proves the proposition.
\end{proof}

If (H1)--(H3) and (H5) hold for Example \ref{example:fl}, they further imply (H4) and (H6) so that the best response  is Lipschitz continuous in $x$
by Lemma \ref{lemma:HJBc12} and Proposition \ref{prop:lipu}.

\section{Performance Analysis}
\label{sec:enash}

In the MFG case it  is  shown \citep{HMC06, CHM17}  that the joint strategy $\{  u^o_{i}(t)  =\varphi_{i}(t,x_{i}(t)|\mu_\cdot), 1\leq i\le  N \} $   yields an $\epsilon$-Nash equilibrium, i.e. for all $\epsilon>0$, there exists $N(\epsilon)$ such that for all $N \geq N(\epsilon)$
\begin{equation}
\label{EPSILONNASHINEQ}
J_i^N({u}_i^\circ, {u}_{-i}^\circ)-\epsilon \leq\inf_{u_i \in\mathcal{U}_{i} } J_i^N(u_i, {u}_{-i}^\circ) \leq J_i^N({u}_i^\circ, {u}_{-i}^\circ).
\end{equation}
This form of approximate Nash equilibrium is a principal result of the MFG analyses in the sequence  \cite{HMC06,CHM17,SEC16} and  in many other  studies. The importance of  \eqref{EPSILONNASHINEQ} is that it states that the cost function of any agent in a finite population can be reduced by at most  $\epsilon$ {\it if it changes unilaterally  from the infinite population  MFG  feedback law} while all other agents remain with  the infinite population based control strategies. The main result of this section is that the same property holds for GMFG systems.

 Throughout this section, let  $\mu_G(\cdot)$ be solved from the GMFG equations \eqref{MFGPDES} and \eqref{clMV}.

\subsection{The $\epsilon$-Nash Equilibrium}
The analysis of GMFG systems as limits of finite objects  necessarily involves the consideration of  graph limits and double limits  in population and graph order. A corresponding set of assumptions  is given below.

(H9)  $M_k\to \infty$ and $\min_{1\le l\le M_k}|\maC_l|\to \infty$ as $k\to \infty$.

(H10) All agents have i.i.d. initial states with distribution $\mu_0^x$ and $E|x_i(0)|\le C_0$.

\begin{remark}
 (H10) is a simplifying assumption to keep further notation light. It may be generalized to $\alpha$ dependent initial distributions.
\end{remark}

(H11) The sequence  $\{G_k; 1\le k< \infty\}$  and the graphon limit satisfy
$$
\lim_{k\to \infty}\max_i\sum_{j=1}^{M_k} \Big|\frac{1}{M_k} g_{\maC_i \maC_j}^k-
\int_{\beta\in I_j} g_{I_i^*,\beta}
d\beta\Big|=0,
$$
where
 $I^*_i$ is the midpoint of the subinterval $I_i\in \{I_1,\ldots, I_{M_k}\}$ of length $1/{M_k}$.

\begin{remark}
Assumption (H11) specifies the nature of  the approximation error between $g^k$ for the finite graph  and the graphon function $g$.
\end{remark}

The next proposition shows that under (H5) and (H11), the limit $g$ is well determined.

\begin{proposition}
For the given sequence $\{g^k, k\ge 1\}$ under \emph{(H9)}, if there exists a graphon $g$ satisfying \emph{(H5)} and \emph{(H11)}, then it is unique.
\end{proposition}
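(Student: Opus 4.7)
The plan is to suppose that both $g$ and $\tilde g$ are graphons satisfying (H5) and (H11) for the sequence $\{g^k\}$, and deduce that $g(\alpha, \cdot) = \tilde g(\alpha, \cdot)$ almost everywhere for every $\alpha \in [0,1]$, from which uniqueness in the graphon sense follows. First, applying the triangle inequality to the two instances of (H11) yields
\begin{equation*}
A_k \coloneqq \max_{1\le i\le M_k} \sum_{j=1}^{M_k} \Big|\int_{I_j} [g(I_i^*, \beta) - \tilde g(I_i^*, \beta)]\, d\beta\Big| \longrightarrow 0 \quad \text{as } k \to \infty,
\end{equation*}
because the step-function approximations of $g$ and $\tilde g$ built from $g^k$ agree.

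The key device is to test against bounded measurable functions. For any bounded measurable $h : [0,1] \to \mathbb{R}$, introduce
\begin{equation*}
\psi(\alpha) \coloneqq \int_0^1 [g(\alpha, \beta) - \tilde g(\alpha, \beta)]\, h(\beta)\, d\beta,
\end{equation*}
which is continuous on $[0,1]$ by (H5) applied to both $g$ and $\tilde g$. I would show $\psi \equiv 0$ by the following approximation argument. Let $h_k$ be the step function constant on each $I_j$ equal to the mean of $h$ over $I_j$; then $\|h_k\|_\infty \le \|h\|_\infty$ and $\|h - h_k\|_{L^1[0,1]} \to 0$ since the mesh $1/M_k$ vanishes (reducing, via density of continuous functions in $L^1$, to uniform approximation of a continuous function by its partition averages). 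Splitting
\begin{equation*}
\psi(I_i^*) = \int_0^1 [g - \tilde g](I_i^*, \beta)\, h_k(\beta)\, d\beta + \int_0^1 [g - \tilde g](I_i^*, \beta)\, [h - h_k](\beta)\, d\beta,
\end{equation*}
the first summand is at most $\|h\|_\infty \, A_k$ (expand $h_k$ piecewise-constantly and bound each of the $M_k$ terms by $|\int_{I_j}[g - \tilde g](I_i^*, \beta)\, d\beta|$ times the corresponding constant), while the second is at most $\|g - \tilde g\|_\infty \|h - h_k\|_{L^1} \le \|h - h_k\|_{L^1}$. Hence $\sup_i |\psi(I_i^*(k))| \to 0$.

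Since the midpoints $\{I_i^*(k): 1 \le i \le M_k,\; k \ge 1\}$ are dense in $[0,1]$ and $\psi$ is continuous, this forces $\psi \equiv 0$ on $[0,1]$. Fixing $\alpha_0 \in [0,1]$ and specializing $h(\beta) = \operatorname{sgn}(g(\alpha_0, \beta) - \tilde g(\alpha_0, \beta))$ yields $\int_0^1 |g(\alpha_0, \beta) - \tilde g(\alpha_0, \beta)|\, d\beta = 0$, whence $g(\alpha_0, \cdot) = \tilde g(\alpha_0, \cdot)$ a.e.; by Fubini $g = \tilde g$ a.e.\ on $[0,1]^2$, which is the required uniqueness. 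The main obstacle is arranging the $L^1$ approximation step with a partition-dependent $h_k$, so that a general bounded measurable test function can be fed into the piecewise estimate provided by (H11); once that bridge is in place, the argument reduces cleanly to the density of the midpoint set $\{I_i^*(k)\}$ and the continuity furnished by (H5).
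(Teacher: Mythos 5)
Your proof is correct, but it takes a genuinely different route from the paper. The paper proves the proposition as a quick corollary of its Appendix Lemma \ref{lemma:ST1}: under (H5), (H9), (H11) one has $\int_{\mathcal{S}\times\mathcal{T}}(g^k-g)\,dx\,dy\to 0$ for arbitrary measurable $\mathcal{S},\mathcal{T}\subset[0,1]$, so applying this to both candidate limits and using the triangle inequality gives $\|g-\hat g\|_\Box=0$, whence $g=\hat g$ (a.e.) since the cut norm is a norm. You instead cancel the common term $\tfrac{1}{M_k}g^k_{\mathcal{C}_i\mathcal{C}_j}$ directly in the two instances of (H11) to get your quantity $A_k\to 0$, and then run a section-wise test-function argument: approximate a bounded measurable $h$ by its partition averages $h_k$ (an $L^1$-contraction, so the $L^1$ error vanishes as the mesh $1/M_k\to 0$), bound $\psi(I_i^*)$ by $\|h\|_\infty A_k+\|h-h_k\|_{L^1}$, and use the (H5)-continuity of $\psi$ together with density of the midpoints to conclude $\psi\equiv 0$; taking $h=\operatorname{sgn}(g(\alpha_0,\cdot)-\tilde g(\alpha_0,\cdot))$ then kills each section. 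What each approach buys: the paper's argument is essentially free because Lemma \ref{lemma:ST1} is needed anyway for the $\epsilon$-Nash analysis, and it phrases uniqueness in the natural cut-norm language; your argument is self-contained, avoids the two-dimensional open-set/interval approximation machinery of that lemma, and delivers the slightly sharper conclusion that $g(\alpha,\cdot)=\tilde g(\alpha,\cdot)$ a.e.\ for \emph{every} vertex $\alpha$ (hence $g=\tilde g$ a.e.\ by Fubini), rather than only a.e.\ equality on the square. Both proofs use (H11) for the quantitative agreement on the partition and (H5) to propagate from the midpoints to all of $[0,1]$, so the hypotheses are exploited in the same way; only the intermediate object (rectangle integrals and the cut norm versus sections tested against bounded measurable functions) differs.
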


\begin{proof}
Assume there is another graphon $\hat g$ satisfying (H5) and (H11).
Fix any $\epsilon>0$ and any  $ \mathcal{S}\times \mathcal{T} \subset [0,1]\times [0,1] $. By Lemma \ref{lemma:ST1},  there exists a sufficiently large  $k_0$ (depending on $\epsilon$, ${\mathcal S}$ and ${\mathcal T}$), such that for both $g$ and $\hat g$ we have
\begin{align}
\Big|\int_{\mathcal{S}\times \mathcal{T}} ( g^{k^0}-g )dxdy \Big|\le \epsilon,\qquad
\Big|\int_{\mathcal{S}\times \mathcal{T} } ( g^{k^0}-\hat g )dxdy\Big| \le \epsilon.\nonumber
\end{align}
Hence
\begin{align}
\Big|\int_{\mathcal{S}\times \mathcal{T}  } ( g-\hat g )dxdy\Big| \le 2\epsilon. \nonumber
\end{align}
Since $\mathcal{S}\times \mathcal{T}  $ is arbitrary, we have
$\| g-\hat g\|_\Box \le 2\epsilon.$
Since $\epsilon$ is arbitrary, we have
$\| g-\hat g\|_\Box=0.$
But the  cut norm is a norm, so we have $g=\hat g$.
\end{proof}

For the $\epsilon$-Nash equilibrium analysis, we consider  a sequence of games each defined on a  finite graph $G_k$. Recall that
there is a total of
$N=\sum_{l=1}^{M_k} |\maC_l|$ agents.

Suppose the cluster $\maC(i)$ of agent ${\cal A}_i$ corresponds to the subinterval $I(i)\in \{I_1, \ldots, I_{M_k}\}$. The
agent ${\cal A}_i$ takes the midpoint $I^*(i)$ of the subinterval $I(i)$ and uses the GMFG equations to determine its control law
  \begin{align}
\hat u_i=\varphi(t, x_i|\mu_G(\cdot); g_{I^*(i)}), \quad 1\le i\le N, \label{hatuIi}
\end{align}
 which we simply write as $ \varphi(t,x_i, g_{I^*(i)})$.
Denote the resulting state process by
$\hat x_i$, $1\le i\le N$.
Recall that
\begin{align*}
&f_0(x_i^N, u_i^N, {\mathcal C}(i))=     \frac{1}{|\maC(i)|}
\sum_{j\in \maC(i)}f(x_i^N, u_i^N, x_j^N) , \\
&f_{G_k} (x_i^N, u_i^N, g_{\maC(i)}^k)  =   \frac{1}{M_k}
\sum_{l=1}^{M_k} g^k_{\maC(i)\maC_l} \frac{1}{|\maC_l|}
\sum_{j\in \maC_l}f(x_i^N, u_i^N, x_j^N) ,
\end{align*}
where the superscript $N$ is added to indicate the population size.
The closed-loop system of  $N$ agents on the finite graph $G_k$
 under the set of strategies \eqref{hatuIi} is given by
\begin{align}
\mbox{\it System A:}\quad d\hat x_i^N =& f_0(\hat x_i^N, \varphi(t, \hat x_i^N, g_{I^*(i)}), {\mathcal C}(i)  )dt  \nonumber \\
&+f_{G_k}(\hat x_i^N, \varphi(t, \hat x_i^N, g_{I^*(i)}) , g_{\maC(i)}^k)dt
+\sigma dw_i ,  \label{xNdeccontrol}
\end{align}
where $ 1\le i\le N$ and $\hat x_i^N(0)=x_i^N(0)$. Note that $g_{\maC(i)}^k$ appears in $f_{G_k}$ as  determined by the finite population system dynamics.
 We  state the following main result.
\begin{theorem} \label{theorem:eNash}
\emph{({\bf $\epsilon$-Nash equilibrium})}
Assume \emph{(H1)--(H11)} hold.
Then when the strategies \eqref{hatuIi} determined by the
GMFG equations \eqref{MFGPDES} and \eqref{clMV} are applied to
a sequence of finite graph systems $\{G_k; 1\le k<\infty\}$, the $\epsilon$-Nash equilibrium property holds
where $\epsilon\to 0$ as $k\to \infty$, and where the unilateral agent ${\mathcal A}_i$ uses a centralized Lipschitz feedback strategy $\psi(t,x_i,x_{-i})$, where $x_{-i}$ denotes  the set of states of all other agents.
\end{theorem}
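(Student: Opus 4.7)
The plan is to follow the classical MFG $\epsilon$-Nash route, adapted to the cluster/graphon structure: first establish a propagation-of-chaos type law of large numbers which shows that under the GMFG strategies \eqref{hatuIi}, the empirical distribution in each cluster $\mathcal{C}_l$ is close (in $W_1$ and uniformly in $t$) to the local mean field $\mu_{I_l^\ast}(t)$ coming from the GMFG equations; then use this to deduce that every agent's finite-population cost is close to the infinite-population cost $J_{I^\ast(i)}(\varphi(\cdot,\cdot,g_{I^\ast(i)});\mu_G(\cdot))$; and finally argue that a unilateral deviation by a single agent ${\mathcal A}_i$ to any centralized Lipschitz feedback $\psi(t,x_i,x_{-i})$ perturbs the other agents' states by only $O(1/|\mathcal{C}(i)|)+O(1/M_k)$, so the deviating agent essentially faces an infinite-population control problem with frozen mean field $\mu_G(\cdot)$, in which $\varphi$ is already the best response.

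For the first (and main) step, for each cluster $\mathcal{C}_l$ and each $i\in\mathcal{C}_l$ I would couple $\hat x_i^N$ from System A \eqref{xNdeccontrol} with an independent copy $\tilde x_i$ of the limit closed-loop MV-SDE
\[
d\tilde x_i = \widetilde{f}[\tilde x_i,\varphi(t,\tilde x_i,g_{I_l^\ast}),\mu_G(t);g_{I_l^\ast}]\,dt+\sigma\,dw_i,
\]
driven by the same Brownian motion $w_i$ and starting from the same $x_i(0)$. Writing $\Delta_i(t)=\hat x_i^N(t)-\tilde x_i(t)$ and subtracting the two SDEs, the drift difference splits into four pieces: (a) the Lipschitz difference in the first argument, yielding $C|\Delta_i(s)|$; (b) the intra-cluster empirical-average versus $f_0[\cdot,\mu_{I_l^\ast}(s)]$, controlled by the LLN within cluster $\mathcal{C}_l$ of size $\to\infty$ by (H9), applied to the i.i.d.\ family $\{\tilde x_j : j\in\mathcal{C}_l\}$ and combined with the bound on $E\sup_s|\Delta_j(s)|$; (c) the inter-cluster weighted average $\frac{1}{M_k}\sum_l g^k_{\mathcal{C}(i)\mathcal{C}_l}\frac{1}{|\mathcal{C}_l|}\sum_{j\in\mathcal{C}_l} f(\cdot,\hat x_j^N)$ versus $\int_0^1\int f(\cdot,z)g(I_l^\ast,\beta)\mu_\beta(s,dz)d\beta$, where the discrete-to-integral gap is controlled by (H11) and (H5) together with uniform boundedness of $f$, while the inner averages are again handled by cluster LLNs; (d) the analogous $f_0$ intra-cluster term. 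A Gronwall argument, uniform over $i$, then yields $\max_i E\sup_{t\le T}|\Delta_i(t)|\wedge 1\to 0$ as $k\to\infty$. This is the hardest step because the rate is a genuine double limit ($M_k\to\infty$ and $\min_l|\mathcal{C}_l|\to\infty$) and one must keep all constants independent of $i$ and of the cluster label $l$.

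Given the coupling estimate, the cost convergence is routine: since $\tilde l_{G_k}$ is bounded and Lipschitz in its state arguments by (H2) and the Lipschitz bound on $\varphi$ from Lemma \ref{lemma:HJBc12}(ii), the same four-piece decomposition (with $l_0,l$ replacing $f_0,f$) together with the coupling gives
\[
\bigl|J_i^N(\hat u^\circ,\hat u_{-i}^\circ)-J_{I^\ast(i)}(\varphi(\cdot,\cdot,g_{I^\ast(i)});\mu_G(\cdot))\bigr|\le \varepsilon_k,
\]
with $\varepsilon_k\to 0$ uniformly in $i$.

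For the unilateral deviation, let ${\mathcal A}_i$ switch to a centralized Lipschitz feedback $\psi(t,x_i,x_{-i})$ and let $(\bar x_j^N)_{j=1}^N$ denote the resulting closed-loop states. The crucial observation is that in the drift of any $\bar x_j^N$ with $j\ne i$, the single agent $\mathcal{A}_i$ contributes with weight at most $\max(1/|\mathcal{C}(j)|,\,g^k_{\mathcal{C}(j)\mathcal{C}(i)}/(M_k|\mathcal{C}(i)|))=o(1)$, so a Gronwall comparison between $\bar x_j^N$ and the corresponding $\hat x_j^N$ (for $j\ne i$) shows $\max_{j\ne i}E\sup_{t\le T}|\bar x_j^N(t)-\hat x_j^N(t)|\to 0$ uniformly over the Lipschitz class of $\psi$ with a fixed Lipschitz constant and uniform bound on $U$. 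Combined with the step-1 coupling, this implies that $\bar x_i^N$ itself is close, in $L^1\sup$-norm, to the solution of the MV-SDE
\[
d\bar x_i = \widetilde f[\bar x_i,\psi(t,\bar x_i,\cdot),\mu_G(t);g_{I^\ast(i)}]\,dt+\sigma\,dw_i
\]
where the state argument of $\psi$ referring to other agents is evaluated along i.i.d.\ limit processes distributed as $\mu_G$; averaging over that law yields an infinite-population cost $\tilde J_i(\psi)\ge \inf_{u}J_{I^\ast(i)}(u;\mu_G(\cdot))=J_{I^\ast(i)}(\varphi(\cdot,\cdot,g_{I^\ast(i)});\mu_G(\cdot))$ by the best-response property obtained in Theorem \ref{theorem:exist}. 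Chaining with the cost-convergence estimate gives
\[
J_i^N(\psi,\hat u_{-i}^\circ)\ge J_i^N(\hat u^\circ,\hat u_{-i}^\circ)-\varepsilon_k',
\]
which is the $\epsilon$-Nash property. The main obstacle I anticipate is keeping all Gronwall constants in step 1 uniform in $i$ and $k$ while simultaneously exploiting (H11) and the intra-cluster LLN; the deviation step is conceptually straightforward once the propagation-of-chaos estimate is in hand.
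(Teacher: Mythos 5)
Your overall architecture matches the paper's: a propagation-of-chaos coupling between the finite closed-loop system and the limit MV dynamics (you merge the paper's two intermediate comparisons, Lemmas \ref{lemma:xNyN} and \ref{lemma:yNyinf}, i.e.\ System $A\to C$ and $C\to D$, into one direct coupling, which is fine), the observation that a single deviator perturbs the other agents only by $o(1)$ (the paper's Lemma \ref{lemma:e3}), and a comparison of the deviator's finite-population cost with the frozen-$\mu_G$ limit control problem, closed by the best response property of $\varphi$. Two points, however, need repair. First, in the coupling step your ``inner averages handled by cluster LLNs'' plus (H11) is not enough: (H11) controls $|g^k_{\maC_i\maC_j}/M_k-\int_{I_j}g_{I^*_i,\beta}d\beta|$, but the limit drift integrates $f$ against $\mu_\beta$ for $\beta\in I_j$, while the cluster LLN only delivers the single measure $\mu_{I_j^*}$; you must also show $\sup_{t}\sup_{\beta\in I_j}W_1(\mu_\beta(t),\mu_{I_j^*}(t))\to 0$, i.e.\ uniform $W_1$-continuity of the GMFG mean field ensemble in the vertex variable. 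This is not a triviality; the paper devotes Lemma \ref{lemma:wasscnt} to it, and that proof itself leans on the continuity of the best response in $\alpha$ (Lemma \ref{lemma:contiBR}) and (H5).

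Second, and more seriously, your treatment of the unilateral deviation replaces the arguments $x_{-i}$ of $\psi$ by i.i.d.\ limit processes and asserts the resulting MV-SDE is close to the true deviated state. The available estimate is only coordinatewise, $\max_{j\ne i}E\sup_t|\bar x_j^N-y_j^\infty|\to 0$; a centralized Lipschitz $\psi$ of $N-1$ coordinates (Lipschitz, say, in an $\ell_1$ or $\ell_2$ sense) can amplify these errors by a factor growing with $N$, so $|\psi(t,x,\bar x_{-i})-\psi(t,x,y^\infty_{-i})|$ need not vanish, and this step as written would fail without extra structural assumptions on how $\psi$ aggregates $x_{-i}$. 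The paper avoids the issue entirely: in Lemma \ref{lemma:jjstar} the realized deviating control $u_\iota^N(t,\omega)$, whatever feedback produced it, is transplanted unchanged as a non-anticipative open-loop process into the limit dynamics \eqref{yiofD}, so no approximation of $\psi$ is ever required; the only errors are the drift and cost perturbations $\delta^k_{f_0},\delta^k_f,\delta^k_{l_0},\delta^k_l$ in \eqref{xindel}, which must be (and are, via the compactness-in-$(z,u)$ argument of Lemmas \ref{lemma:DD2}--\ref{lemma:edd0}) bounded uniformly over the deviator's control. You should either adopt that device or add an assumption/argument controlling the aggregate sensitivity of $\psi$ to $x_{-i}$; likewise, make explicit that all your perturbation bounds are uniform in the deviating control, since that uniformity is what allows the chain $J^N_\iota(\psi)\ge\inf_u J^*_\iota(u)-C\epsilon^k_{fl}$ and then Lemma \ref{lemma:jjstarc} to close the $\epsilon$-Nash inequality.
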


We first explain the basic idea for
the demonstration of the $\epsilon$-Nash equilibrium property.
Suppose all other players, except agent ${\cal A}_{\cki}$,  employ the control strategies based on the GMFG equation system.
When ${\cal A}_{\cki}$ employs a different strategy,
the resulting change in its performance can be measured using
a limiting stochastic control problem where
both the system dynamics and the cost are subject to small perturbation due to the mean field approximation of the effects of all other  agents. The proof is  technical and preceded by some lemmas.

\subsection{Proof of Theorem \ref{theorem:eNash}}

Suppose $x_{\cki}^N$ is determined from a general feedback control law
$u_{\cki}^N$ instead of the GMFG best response. With the exception of  agent ${\mathcal A}_{\cki}$ with its unilateral strategy, all other agents ${\cal A}_j$, $j\ne {\cki}$, still have strategies determined by \eqref{hatuIi}.
We introduce the  system:
\begin{align}
\mbox{\it System B:}\quad\begin{cases}
dx_{\cki}^N = f_0(x_{\cki}^N, u_{\cki}^N, \maC(\cki)  )
dt+ f_{G_k}(x_{\cki}^N,  u_{\cki}^N, g_{\maC(\cki)}^k)dt +\sigma
dw_{\cki} ,\\ % \label{uide}\\
dx_j^N = f_0(x_j^N,\varphi(t, x_j^N, g_{I^*(j)}),\maC(j)   )
dt \\
\qquad\qquad + f_{G_k}(x_j^N,  \varphi(t, x_j^N, g_{I^*(j)}), g_{\maC(j)}^k)dt +\sigma dw_j , \\
\qquad\qquad   j\ne {\cki},\quad  1\le j\le N.
\end{cases}
\label{uijne}
\end{align}
We note that $x_j^N$ is affected by the unilateral choice of strategy by  ${\mathcal A}_{\cki}$ due to the coupling in
$f_0$ and  $f_{G_k}$. For this reason, $x_j^N$ differs from $\hat x_j^N$ in \eqref{xNdeccontrol} although   the control law of ${\cal A}_j$, $j\ne \cki$, remains the same. The central task is to estimate by how much  ${\cal A}_{\cki}$ can reduce its cost.

To  facilitate the performance estimate in System $B$, we introduce two auxiliary systems below.  Consider
\begin{align}
\mbox{\it System C:}\quad
dy_i^N=\ &\int_{\mathbb R} f_0(y_i^N, \varphi(t, y_i^N, g_{I^*(i)}), z)
m_{y_i^N} (dz) dt \nonumber\\
&+  \frac{1}{M_k}\sum_{l=1}^{M_k}
g^k_{\maC(i) \maC_l} \frac{1}{|\maC_l|}
\sum_{j\in \maC_l} \int_\mathbb{R} f(y_i^N, \varphi(t,y_i^N, g_{I^*(i)}) , z )m_{y_j^N}(dz) dt \nonumber \\
&+\sigma dw_i \nonumber \\
=\ &\int_{\mathbb R}f_0(y_i^N, \varphi(t, y_i^N, g_{I^*(i)}), z)m_{y_i^N} (dz) dt \nonumber\\
&+ \frac{1}{M_k}\sum_{l=1}^{M_k}
g^k_{\maC(i) \maC_l}
 \int f(y_i^N, \varphi(t,y_i^N, g_{I^*(i)}) , z )m_l^N(t,dz) dt \nonumber \\
&+\sigma dw_i, \label{gmkv}
\end{align}
where $1\le i\le N$ and $y_i^N(0) =x_i^N(0)$, and $m_{y_j^N(t)}$  denotes the law of $y_j^N(t)$.  Each Brownian motion  $w_i$ is the same as in
\eqref{xNdeccontrol}.
The second equality holds since all processes in cluster $\maC_l$ have the same distribution denoted by $m_l^N(t,dz)$ at time $t$.
  It is clear that the  processes $y_1^N, \ldots, y_N^N$ are independent, and $\{y_j^N, j\in \maC_l\}$ are i.i.d. for any given $l$.

Next we introduce
\begin{align}
\mbox{\it System D:}\quad dy_i^\infty(t) &= \widetilde f[y_i^\infty(t), \varphi(t, y_i^\infty(t), g_{I^*(i)}), \mu_G(t); g_{I^*(i)\ }] dt +\sigma dw_i(t),   \label{ygmfg}
\end{align}
where $1\le i\le N$ and $y_i^\infty(0)= x_i^N(0)$.
Here $w_i$ is the same as in \eqref{xNdeccontrol}.
The process $y_i^\infty  $ is generated by the closed-loop dynamics for an agent at the node $I^*(i)$ associated with the cluster $\maC(i)$ using the GMFG based control law \eqref{hatuIi}  while situated in  an infinite population represented by the ensemble $\mu_G(\cdot)$ of local mean fields. We  view \eqref{ygmfg} as an instance of the generic equation \eqref{MVNLMinorDyn2} under the control law \eqref{hatuIi}.  By Theorem \ref{theorem:exist}, $y^\infty_i(t)$ has the law $\mu_{I^*(i)}(t)$. Note that if $j\in \maC(i)$, $y_i^\infty$ and $y_{j}^\infty$ are two processes of  the same distribution.

We shall denote the $A$ to $C$ system deviation by  $\epsilon_{1,N}$, the $C$ to $D$ deviation by  $\epsilon_{2,N}$ and the (non-unilateral agent) $B$ to $D$ deviation by  $\epsilon_{3,N}$.
Specifically, we set
\begin{align*}
&\epsilon_{1,N}=\sup_{i\le N, t}E |\hat x_{i}^N(t) - y_i^N(t) |,\qquad \epsilon_{2,N}=\sup_{i\le N,t}E|y_i^N(t)- y_i^\infty(t)|,\\
&\epsilon_{3,N}=\sup_{u_{\cki}^N,t,\cki\ne j\le N}E|x_j^N(t)-y_j^\infty(t)|,
\end{align*}
where $x_j^N$ is given by \eqref{uijne}.

\begin{lemma}
 The SDE system \eqref{gmkv} has a unique solution
 $(y_1^N,\ldots,  y_N^N)$.
\end{lemma}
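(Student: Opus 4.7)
The plan is to adapt the fixed-point argument used in Lemma \ref{lemma:phinu} to the finite-population, finite-cluster setting. First, I would exploit cluster symmetry: for $i, j$ in the same cluster $\maC_l$, the drift on the right of \eqref{gmkv} depends on $i$ only through $I^*(i) = I^*_l$, the initial distributions are identical by \textbf{(H10)}, and the driving Brownian motions are independent. Therefore the processes within $\maC_l$ share a common law, which I denote $m_l^N$, and it suffices to solve a coupled system of $M_k$ McKean--Vlasov equations for a single representative per cluster; the remaining $|\maC_l|-1$ processes in each cluster are then obtained by driving the same SDE with their own Brownian motions.

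Next I would set up a fixed-point map $\Phi$ on $({\bf M}_T)^{M_k}$ equipped with the product metric $d_{M_k}(\mathbf{m}, \bar{\mathbf{m}}) = \max_{1\le l\le M_k} D_T(m_l, \bar m_l)$. Given $\mathbf{m} = (m_l)_{l=1}^{M_k}$, substitute $\mu_l(t) := {\rm Marg}_t(m_l)$ for $m_l^N(t,dz)$ on the right of \eqref{gmkv} to obtain a time-dependent drift $b_l(t, y; \mathbf{m})$. By \textbf{(H2)}, \textbf{(H3)}, the bound $|g^k_{\maC_l\maC_j}|\le 1$, and the uniform-in-$\alpha$ Lipschitz continuity of $\varphi(t, \cdot, g_{I_l^*})$ in $x$ from Lemma \ref{lemma:HJBc12}(ii), each $b_l(\cdot, \cdot; \mathbf{m})$ is bounded on $[0,T]\times\mathbb{R}$ and Lipschitz in $y$ uniformly in $t$. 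Standard SDE theory then yields, for any prescribed Brownian motion and initial condition, a unique strong solution $y_l^N$ with law $\tilde m_l^N \in {\bf M}_T$, and I set $\Phi(\mathbf{m}) := (\tilde m_l^N)_{l=1}^{M_k}$.

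To show $\Phi$ has a unique fixed point, I would couple solutions driven by $\mathbf{m}$ and $\bar{\mathbf{m}}$ using the same Brownian motion and initial state. Expanding the pathwise difference and applying the Lipschitz-in-law estimate \eqref{hcDt} together with $\frac{1}{M_k}\sum_j g^k_{\maC_l\maC_j} \le 1$ and the Lipschitz continuity of $\varphi$ in $y$, a Gronwall argument parallel to that in Lemma \ref{lemma:phinu} gives
\begin{align}
\max_l D_t(\tilde m_l^N, \overline{\tilde m_l^N}) \le C\int_0^t \max_l D_s(m_l^N, \bar m_l^N)\, ds. \nonumber
\end{align}
Iterating as in \cite[p.~174]{szn91} shows that some power $\Phi^{k_0}$ is a strict contraction on the complete metric space $(({\bf M}_T)^{M_k}, d_{M_k})$, which produces a unique fixed point $\mathbf{m}^* = (m_l^*)_{l=1}^{M_k}$. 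For every $i \in \maC_l$, I then define $y_i^N$ as the unique strong solution of the SDE with drift $b_l(\cdot, \cdot; \mathbf{m}^*)$ driven by the prescribed $w_i$; by uniqueness in law each such $y_i^N$ has law $m_l^*$, which matches the measures appearing under the integrals in \eqref{gmkv}, so \eqref{gmkv} is satisfied.

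The argument is mostly routine given the earlier lemmas; the one point requiring care is the bookkeeping that keeps all $|\maC_l|$ within-cluster processes attached to the common law $m_l^*$ used in the fixed-point construction, together with verifying that the Lipschitz constant of $\varphi$ in $x$ can be taken uniformly over the finitely many node parameters $I_l^*$, $1\le l\le M_k$, which is immediate from Lemma \ref{lemma:HJBc12}(ii).
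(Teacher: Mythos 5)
Your fixed-point argument is essentially the proof the paper has in mind: it simply defers to \cite[Theorem 6]{HMC06}, which is exactly the McKean--Vlasov fixed-point/contraction scheme you carry out, and your estimates mirror those of Lemma \ref{lemma:phinu} with the control-law difference term absent. One caveat on ordering: you assert at the outset that processes within a cluster share a common law and use this to reduce to $M_k$ representative laws, but that symmetry is itself a consequence of uniqueness, so as written your contraction on $({\bf M}_T)^{M_k}$ only yields uniqueness within the class of cluster-symmetric solutions. The repair is immediate and uses nothing new: run the identical contraction on the full product $({\bf M}_T)^{N}$ with the metric $\max_{1\le i\le N} D_T(m_i,\bar m_i)$ (the drift of \eqref{gmkv} is Lipschitz in the laws through exactly the same bound, since $|g^k_{\maC(i)\maC_l}|\le 1$ and $\varphi(t,\cdot,g_{I^*(i)})$ is Lipschitz uniformly in $i$ by Lemma \ref{lemma:HJBc12}(ii)), or equivalently compare two arbitrary solutions driven by the same Brownian motions pathwise via Gronwall; uniqueness of the full system then follows, and the within-cluster i.i.d.\ property becomes a conclusion rather than an input, which is how the paper states it after the lemma.
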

\proof The proof is similar to \cite[Theorem 6]{HMC06}.
\endproof

\begin{lemma} \label{lemma:xNyN}
 $\epsilon_{1,N}\to 0$ as $N\to \infty$ (due to $k\to \infty$). %\endproof
\end{lemma}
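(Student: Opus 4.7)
My plan is to estimate the pathwise drift difference between System $A$ and System $C$ and close a Gronwall loop, with the only non-Gronwall contribution arising from intra-cluster fluctuations of the empirical mean around the true cluster law. The drift of $\hat x_i^N - y_i^N$ splits naturally into three pieces: the intra-cluster $f_0$ difference, the inter-cluster $f_{G_k}$ difference, and in each piece I would add and subtract the intermediate quantity obtained by replacing $(\hat x_i^N,\hat x_j^N)$ by $(y_i^N,y_j^N)$ inside the empirical averages. This produces a ``Lipschitz part,'' controlled via the uniform Lipschitz dependence of $f_0, f$ in $(x,y)$ (H2) and of $\varphi(t,\cdot,g_{I^*(i)})$ in $x$ (Lemma \ref{lemma:HJBc12}(ii) and the uniformity in Lemma \ref{lemma:unilip}(iii)), and a ``fluctuation part'' in which the state arguments are already $y^N$'s.

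For the fluctuation part I would exploit the structure of System $C$: the processes $\{y_j^N:j\in\maC_l\}$ are i.i.d.\ with common law $m_l^N(t,\cdot)$, and the families for different clusters are mutually independent. Thus, conditional on $y_i^N$, the sum $\frac{1}{|\maC_l|}\sum_{j\in\maC_l}f(y_i^N,\varphi(t,y_i^N,g_{I^*(i)}),y_j^N)$ is an i.i.d.\ average (when $i\notin\maC_l$; when $i\in\maC_l$ the $j=i$ summand is isolated and contributes $O(1/|\maC_l|)$ by boundedness of $f$ in (H2)), so its $L^2$ deviation from the conditional mean $\int f(y_i^N,\varphi(t,y_i^N,g_{I^*(i)}),z)m_l^N(t,dz)$ is $O(1/\sqrt{|\maC_l|})$. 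Summing over $l$ with graphon weights $\tfrac{1}{M_k}g^k_{\maC(i)\maC_l}\in[0,1/M_k]$ bounds the inter-cluster fluctuation by $\frac{C}{M_k}\sum_l \frac{1}{\sqrt{|\maC_l|}}\le \frac{C}{\sqrt{\min_l|\maC_l|}}$; the intra-cluster fluctuation is bounded by $\frac{C}{\sqrt{|\maC(i)|}}$. Both tend to $0$ uniformly in $i$ and $t\in[0,T]$ by (H9).

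Setting $\epsilon_1^i(t):=E\sup_{s\le t}|\hat x_i^N(s)-y_i^N(s)|$, the Lipschitz part delivers a bound of the form
\begin{equation*}
\epsilon_1^i(t)\le C\int_0^t \Big(\epsilon_1^i(s)+\tfrac{1}{|\maC(i)|}\sum_{j\in\maC(i)}\epsilon_1^j(s)+\tfrac{1}{M_k}\sum_{l=1}^{M_k}g^k_{\maC(i)\maC_l}\tfrac{1}{|\maC_l|}\sum_{j\in\maC_l}\epsilon_1^j(s)\Big)ds+\delta_{k},
\end{equation*}
where $\delta_k=o(1)$ as $k\to\infty$ uniformly in $i$ by the fluctuation estimate above. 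Taking the supremum over $i\le N$ on both sides, the bracketed average is bounded by $3\sup_j\epsilon_1^j(s)$, and Gronwall's inequality gives $\sup_i\epsilon_1^i(T)\le \delta_k e^{CT}\to 0$, which is the claim (the uniform-in-$t$ supremum inside $\epsilon_1^i(t)$ was used to absorb the $\sup_{s\le t}$ in the SDE form; for the marginal-in-$t$ statement of $\epsilon_{1,N}$ one can either keep the pathwise supremum throughout or invoke $E|\hat x_i^N(t)-y_i^N(t)|\le E\sup_{s\le t}|\hat x_i^N(s)-y_i^N(s)|$).

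The only delicate point I anticipate is the correlation between $y_i^N$ and $\{y_j^N:j\in\maC(i)\setminus\{i\}\}$ when forming the intra-cluster empirical mean: the $j=i$ term must be separated and bounded by $2\|f_0\|_\infty/|\maC(i)|$, after which conditioning on $y_i^N$ restores i.i.d.\ structure for the remaining $|\maC(i)|-1$ terms. All other steps are routine given the uniform Lipschitz and boundedness hypotheses already available.
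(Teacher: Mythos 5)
Your proposal is correct and follows essentially the same route as the paper's proof: split the drift difference into a Lipschitz part (controlled by (H2)--(H3) and the uniform Lipschitz continuity of $\varphi$ in $x$) and a fluctuation part, bound the latter via the independence/i.i.d.\ structure of System $C$ by $O(1/\sqrt{\min_l|\maC_l|})$, and close with Gronwall on $\max_i$ of the expected deviation. Your per-cluster $L^2$ bound on the inter-cluster fluctuation is slightly coarser than the paper's joint variance estimate $C/\sqrt{M_k\min_l|\maC_l|}$, but it still vanishes under (H9), and your explicit handling of the conditioning on $y_i^N$ and the $j=i$ diagonal term is a careful treatment of a point the paper passes over quickly.
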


\begin{proof}
We write
\begin{align}
\hat x_i^N(t)-y_i^N(t)=& \int_0^t \frac{1}{|\maC(i)|} \sum_{j\in \maC(i)}\xi_{ij}^0(s) ds \label{hximinusyi} \\
&+\int_0^t \frac{1}{M_k} \sum_{l=1}^{M_k} g_{\maC(i) \maC_l}^k \frac{1}{|\maC_l|}\sum_{j\in \maC_l}  \xi_{ij}(s) ds, \nonumber
\end{align}
where
\begin{align*}
&\xi_{ij}^0(s)=f_0(\hat x_i^N,\varphi(s, \hat x_i^N, g_{I^*(i)}), \hat x_j^N)-
\int_{\mathbb R}f_0( y_i^N,\varphi(s,  y_i^N, g_{I^*(i)}), z)m_{y_j^N(s)}(dz),   \\
&\xi_{ij}(s)= f(\hat x_i^N,\varphi(s, \hat x_i^N, g_{I^*(i)}), \hat x_j^N) - \int_{\mathbb{R}}f (y_i^N,\varphi(s,  y_i^N, g_{I^*(i)} ), z) m_{y_j^N(s)} (dz).
\end{align*}
We check  the second line of \eqref{hximinusyi} first.  Write
\begin{align*}
\xi_{ij}(s)=& f(\hat x_i^N,\varphi(s, \hat x_i^N, g_{I^*(i)}),\hat x_j^{N})
%-f( y_i^N,\varphi(s,  y_i^N, g_{I^*(i)}), \hat x_j^N)\\
%&+ f( y_i^N,\varphi(s,  y_i^N, g_{I^*(i)}), \hat x_j^N)
-f( y_i^N,\varphi(s,  y_i^N, g_{I^*(i)}), y_j^N)   \\
&+f( y_i^N,\varphi(s,  y_i^N, g_{I^*(i)}), y_j^N)- \int_{\mathbb{R}}f (y_i^N,\varphi(s,  y_i^N, g_{I^*(i)} ), z) m_{y_j^N(s)} (dz)  .
\end{align*}
Denote
$$
\zeta_{ij}=f( y_i^N,\varphi(s,  y_i^N, g_{I^*(i)}), y_j^N)- \int_{\mathbb{R}}f (y_i^N,\varphi(s,  y_i^N, g_{I^*(i)} ), z) m_{y_j^N(s)} (dz).
$$
By the Lipschitz conditions (H2), (H3) and the best response's uniform Lipschitz continuity in $x$ by Lemma \ref{lemma:unilip}, we obtain
\begin{align*}
&\Big|\frac{1}{M_k} \sum_{l=1}^{M_k} g_{\maC(i) \maC_l}^k \frac{1}{|\maC_l|}\sum_{j\in \maC_l}  \xi_{ij}(s)\Big|\\
 \le& C|\hat x_i^N-y_i^N| +  \frac{C}{M_k} \sum_{l=1}^{M_k} g_{\maC(i) \maC_l}^k \frac{1}{|\maC_l|}\sum_{j\in \maC_l} |\hat x_j^N-y_j^N|\\
&+\Big|\frac{1}{M_k} \sum_{l=1}^{M_k} g_{\maC(i) \maC_l}^k \frac{1}{|\maC_l|}\sum_{j\in \maC_l} \zeta_{ij} \Big|.
\end{align*}
Then by independence of $y_i^N$, $1\le i\le N$,
\begin{align*}
E\Big|\frac{1}{M_k} \sum_{l=1}^{M_k} g_{\maC(i) \maC_l}^k \frac{1}{|\maC_l|}\sum_{j\in \maC_l} \zeta_{ij}\Big|^2&\le C\sum_{l=1}^{M_k}\sum_{j\in \maC_l} \frac{|g_{\maC(i) \maC_l}^k|^2}{M_k^2 |\maC_l|^2}\\
&\le \frac{C}{M_k\min_l|\maC_l|}.
\end{align*}
The estimate for   $\frac{1}{|\maC(i)|} \sum_{j\in \maC(i)}\xi_{ij}^0(s) $ can be obtained similarly.
Now it follows from  \eqref{hximinusyi} that
\begin{align}
&E|\hat x_i^N(t)-y_i^N(t)|\le C\int_0^t E|\hat x_i^N(s)-y_i^N(s)|ds \nonumber\\
&+ \frac{C}{M_k} \sum_{l=1}^{M_k}  \frac{g_{\maC(i) \maC_l}^k}{|\maC_l|}\sum_{j\in \maC_l}\int_0^t E|\hat x_j^N(s)-y_j^N(s)|ds\nonumber \\
&+\frac{C}{|\maC(i)|}\sum_{j\in \maC(i)}\int_0^t E|\hat x_j^N(s)-y_j^N(s)| ds +\frac{C_1}{\sqrt{M_k\min_l|\maC_l|}}+\frac{C}{\sqrt{|\maC(i)|}} \nonumber  \\
&\le C_2\int_0^t \Delta^N(s) ds+\frac{C_3}{\sqrt{\min_l|\maC_l|}} ,\nonumber
\end{align}
where
$
\Delta^N(t)=\max_{1\le i\le N}E|\hat x_i^N(t)-y_i^N(t)|.
$
The above further implies
\begin{align}
\Delta^N(t) \le C_2\int_0^t \Delta^N(s) ds+\frac{C_3}{\sqrt{\min_l|\maC_l|}}.\nonumber
\end{align}
The lemma follows from (H9) and Gronwall's lemma.
%We obtain
%$$
%\max_{0\le t\le T}\Delta^N(t)\le \frac{C_2e^{C_1T}}{\sqrt{\min_l|\maC_l|}}.
%$$
%The lemma follows.
\end{proof}

\begin{lemma}\label{lemma:yNyinf}
We have $\epsilon_{2,N}\to 0$ as $N\to \infty$. %\endproof
\end{lemma}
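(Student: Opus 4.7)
The plan is to express $y_i^N(t) - y_i^\infty(t)$ as an integral of the drift difference between System $C$ and System $D$, split that difference into three controllable pieces, and close the argument with Gronwall's lemma applied to $\Delta^N(t) \coloneqq \sup_{1 \le i \le N} E|y_i^N(t) - y_i^\infty(t)|$.

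Concretely, I would write the drift difference at time $s$ as $A_1 + A_2 + A_3$, where $A_1$ is the intra-cluster term $\int f_0(y_i^N,\varphi,z)\,m_{y_i^N(s)}(dz) - \int f_0(y_i^\infty,\varphi,z)\,\mu_{I^*(i)}(s,dz)$, $A_2$ is the inter-cluster empirical-measure term obtained from the graphon-weighted average over $l$ of $\int f(y_i^N,\varphi,z)\,m_l^N(s,dz) - \int f(y_i^\infty,\varphi,z)\,\mu_{I^*(l)}(s,dz)$, and $A_3$ is the pure graphon-approximation piece
\begin{align*}
A_3 & = \frac{1}{M_k}\sum_{l=1}^{M_k} g^k_{\mathcal{C}(i)\mathcal{C}_l}\int f(y_i^\infty,\varphi,z)\,\mu_{I^*(l)}(s,dz) \\
& \quad {}- \int_0^1 g(I^*(i),\beta) \int f(y_i^\infty,\varphi,z)\,\mu_\beta(s,dz)\,d\beta.
\end{align*}

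For $A_1$ and $A_2$ I would use the Lipschitz continuity of $f_0$ and $f$ in $(x,z)$ from (H2), the uniform-in-$\alpha$ Lipschitz continuity of $\varphi$ in $x$ from Lemma \ref{lemma:unilip}(iii), and the observation that in System $C$ the cluster members $\{y_j^N : j \in \mathcal{C}_l\}$ are i.i.d.\ with common time-$s$ law $m_l^N(s)$ while each $y_j^\infty$ has time-$s$ law $\mu_{I^*(l)}(s)$; this gives $W_1(m_l^N(s),\mu_{I^*(l)}(s)) \le E|y_j^N(s)-y_j^\infty(s)| \le \Delta^N(s)$ for any $j \in \mathcal{C}_l$. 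Combined with $g^k_{\mathcal{C}(i)\mathcal{C}_l} \in [0,1]$, this yields $E|A_1| + E|A_2| \le C\Delta^N(s)$ with $C$ independent of $i,k,s$. For the genuinely new term $A_3$, I would insert the intermediate quantity $\sum_l \bigl(\int_{I_l} g(I^*(i),\beta)\,d\beta\bigr) \int f(y_i^\infty,\varphi,z)\,\mu_{I^*(l)}(s,dz)$ and split. The distance from the finite sum is bounded by $\|f\|_\infty \sum_l \bigl|\tfrac{1}{M_k} g^k_{\mathcal{C}(i)\mathcal{C}_l} - \int_{I_l} g(I^*(i),\beta)\,d\beta\bigr|$, which vanishes uniformly in $i$ by assumption (H11); the distance from the graphon integral is bounded by $\mathrm{Lip}(f) \cdot \sup_{l,\,\beta \in I_l,\, s \in [0,T]} W_1(\mu_{I^*(l)}(s),\mu_\beta(s))$, which vanishes because (C1)--(C2) make $(\beta,s) \mapsto \mu_\beta(s)$ continuous on the compact set $[0,1]\times[0,T]$ and hence uniformly continuous, while $1/M_k \to 0$ by (H9). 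Call the resulting uniform bound $\epsilon_k \to 0$.

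Assembling these estimates yields $\Delta^N(t) \le C\int_0^t \Delta^N(s)\,ds + T\epsilon_k$ on $[0,T]$, and Gronwall's lemma gives $\Delta^N(T) \le T\epsilon_k e^{CT} \to 0$, which is exactly $\epsilon_{2,N} \to 0$. The main obstacle I anticipate is the clean disentanglement of the two qualitatively different approximations inside $A_3$: the discrete-to-graphon passage (controlled by (H11)) and the replacement of $\mu_{I^*(l)}$ by $\mu_\beta$ across $I_l$ (controlled by uniform continuity of the ensemble). Both have to be bounded by deterministic quantities that are independent of the random process $y_i^\infty$, for otherwise one would reintroduce a term of order $\Delta^N(s)$ without a small prefactor and Gronwall could not absorb it.
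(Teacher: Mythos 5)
Your decomposition is essentially the paper's own proof: the paper splits the graphon drift of System $D$ into $\xi^i_{k,1}$ (coefficients $g^k_{\maC(i)\maC_l}/M_k$), $\xi^i_{k,2}$ (the discrete-to-graphon error killed by (H11) and boundedness of $f$), and $\zeta^i_k$ (the replacement of $\mu_\beta$ by $\mu_{I_l^*}$ on each $I_l$), then bounds the remaining law-difference terms by $E|y_j^N-y_j^\infty|$ exactly as in your $A_1,A_2$ (using the coupling through the shared Brownian motions and the uniform Lipschitz property of $\varphi$ from Lemma \ref{lemma:unilip}), and closes with Gronwall. So the structure, the use of (H11), and the Gronwall closure all coincide with the paper.

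The one place where you are too quick is the claim that $\sup_{l,\,\beta\in I_l,\,s\le T} W_1(\mu_{I_l^*}(s),\mu_\beta(s))\to 0$ follows from ``(C1)--(C2) make $(\beta,s)\mapsto\mu_\beta(s)$ continuous, hence uniformly continuous.'' Condition (C2) only controls $\sup_\beta\big|\int\phi\,d\mu_\beta(t_1)-\int\phi\,d\mu_\beta(t_2)\big|$ for \emph{bounded} Lipschitz test functions $\phi$, i.e.\ a bounded-Lipschitz--type modulus in $t$; this is not by itself the $W_1$ time-modulus you need in the triangle inequality $W_1(\mu_\beta(s),\mu_{\beta^*}(s^*))\le W_1(\mu_\beta(s),\mu_\beta(s^*))+W_1(\mu_\beta(s^*),\mu_{\beta^*}(s^*))$ (the $\beta$-part at fixed time is fine via (C1), since the equilibrium $\mu_G(\cdot)$ lies in ${\mathcal M}_{[0,T]}$ by Lemma \ref{lemma:unilip}(ii)). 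To repair it you must either add a truncation/uniform-integrability argument converting the bounded-Lipschitz bound into a $W_1$ bound, or---more simply, and this is what the paper does---prove the $W_1$ time-equicontinuity directly from the closed-loop SDE: bounded drift and additive noise give $W_1(\mu_\beta(t),\mu_\beta(t^*))\le E|x_\beta(t)-x_\beta(t^*)|\le C|t-t^*|^{1/2}$ uniformly in $\beta$. The paper packages the whole uniform joint continuity statement as the dedicated Appendix Lemma \ref{lemma:wasscnt} (also proving the $\beta$-continuity there by SDE estimates, dominated convergence, (H5) and the continuity of the best response), and then uses $\delta^\mu_k\to 0$ exactly where you use your $\epsilon_k$. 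With that step justified, your argument is complete and matches the paper's.
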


\begin{proof}
For System $D$ and $1\le i\le N$, $y_i^\infty(t)$ has the law $\mu_{I^*(i)} (t) $ and we write
\begin{align}
dy_i^\infty =\ & \int_{\mathbb R}f_0(y_i^\infty, \varphi(t, y_i^\infty, g_{I^*(i)} ), z)\mu_{I^*(i)} (t,dz) dt +\sigma dw_i \label{yiinfphi}    \\
&+ \int_0^1 \int_{\mathbb{R}} f(y_i^\infty,\varphi(t, y_i^\infty, g_{I^*(i)}  )  , z) g(I^*(i), \beta)\mu_\beta (t,dz) d\beta \ dt. \nonumber
\end{align}
Set
\begin{align*}
&\int_0^1 \int_{\mathbb{R}} f(y_i^\infty,\varphi(t, y_i^\infty, g_{I^*(i)}  )  , z) g(I^*(i), \beta)\mu_\beta (t,dz) d\beta \\
=\ &\sum_{l=1}^{M_k} \int_{\beta\in I_l}\int_{\mathbb{R}}f(y_i^\infty,\varphi(t, y_i^\infty, g_{I^*(i)}  )  , z) g(I^*(i), \beta)\mu_\beta (t,dz) d\beta \\
%=&\sum_{l=1}^{M_k}\left[ \int_{\beta\in I_l}\int_{\mathbb{R}}f(y_i^\infty,\varphi(t, %y_i^\infty, g_{I^*(i)}  )  , z) g(I^*(i), \beta)\mu_{I_l^*} (t,dz) d\beta
%+\zeta_l\right]  \\
 \eqqcolon  &\ \xi_k^i
 + \zeta_k^i,
\end{align*}
where
\begin{align}
&\xi_k^i=\sum_{l=1}^{M_k} \int_{\beta\in I_l} g(I^*(i), \beta) d\beta \int_{\mathbb{R}}f(y_i^\infty,\varphi(t, y_i^\infty, g_{I^*(i)}  )  , z)\mu_{I_l^*} (t,dz),\nonumber\\
& \zeta_k^i=\sum_{l=1}^{M_k} \zeta^i_{kl}, \nonumber  \\
&\zeta_{kl}^i \coloneqq  \int_{\beta\in I_l}\int_{\mathbb{R}}f(y_i^\infty,\varphi(t, y_i^\infty, g_{I^*(i)}  )  , z) g(I^*(i), \beta)[\mu_\beta(t,dz)-\mu_{I_l^*} (t,dz)] d\beta.\label{zetaldb}
\end{align}
We rewrite
\begin{align*}
\xi_k^i = &\sum_{l=1}^{M_k} \frac{g^k_{\maC(i)\maC_l}}{M_k}  \int_{\mathbb{R}}f(y_i^\infty,\varphi(t, y_i^\infty, g_{I^*(i)}  )  , z)\mu_{I_l^*} (t,dz) \\
 &+\sum_{l=1}^{M_k}  \left[ \int_{\beta\in I_l} g(I^*(i), \beta) d\beta -\frac{g^k_{\maC(i)\maC_l}}{M_k} \right]\int_{\mathbb{R}}f(y_i^\infty,\varphi(t, y_i^\infty, g_{I^*(i)}  )  , z)\mu_{I_l^*} (t,dz) \\
 \eqqcolon\ & \xi^i_{k,1}+\xi^i_{k,2}.
\end{align*}
By (H11) and boundedness of $f$,  we have $\lim_{k\to \infty}\sup_{t,\omega}\max_{1\le i\le N}|\xi^i_{k,2}| =0   $ so that
\begin{align}
\lim_{k\to \infty}\max_i\int_0^TE|\xi^i_{k,2}(t)|dt =0. \label{xik2to0}
\end{align}
Now \eqref{yiinfphi} may be rewritten in the form
\begin{align}
dy_i^\infty=\ &\int_{\mathbb R} f_0(y_i^\infty, \varphi(t, y_i^\infty, g_{I^*(i)} ),z)\mu_{I^*(i)}(t,dz)dt +\sigma dw_i \nonumber \\
&+ (\xi^i_{k,1} +\xi^i_{k,2}+ \zeta^i_k)dt. \nonumber
\end{align}

In view of    \eqref{gmkv},
%\begin{align}
%dy_{i}^N=&\int_{\mathbb R} f_0(y_i^N, \varphi(t, y_i^N, g_{I^*(i)}),z)m_{y_i^N(t)}(dz) %dt +\sigma dw_i  \nonumber\\
%&+ \frac{1}{M_k}\sum_{l=1}^{M_k}
%g^k_{\maC(i) \maC_l}
% \int_{\mathbb R} f(y_i^N, \varphi(t,y_i^N, g_{I^*(i)}) , z )m_l^N(t,dz) %dt .\nonumber
%\end{align}
we have
\begin{align*}
&y_i^\infty(t) -y_i^N(t)\\
=& \int_0^t
\int_{\mathbb R}[f_0(y_i^\infty, \varphi(s, y_i^\infty, g_{I^*(i)} ),z)\mu_{I^*(i)}(s,dz) - f_0(y_i^N, \varphi(s, y_i^N, g_{I^*(i)}),z)m_{y^N_i(s)}(dz) ]  ds \\
  & +\frac{1}{M_k}\sum_{l=1}^{M_k}  g^k_{\maC(i)\maC_l} \int_0^t\int_{\mathbb{R}}f(y_i^\infty,\varphi(s, y_i^\infty, g_{I^*(i)}  )  , z)\mu_{I_l^*} (s,dz)ds\\
&-  \frac{1}{M_k}\sum_{l=1}^{M_k}
g^k_{\maC(i) \maC_l}\int_0^t
 \int_{\mathbb{R}} f(y_i^N, \varphi(s,y_i^N, g_{I^*(i)}) , z )m_l^N(s,dz) ds\\
&+ \int_0^t (\xi^i_{k,2}+ \zeta^i_k)ds.
\end{align*}
Denote
\begin{align}
\Delta_{il}(s)=&\Big|\int_{\mathbb{R}}f(y_i^\infty,\varphi(s, y_i^\infty, g_{I^*(i)}  )  , z)\mu_{I_l^*} (s,dz) \nonumber  \\
& -\int_{\mathbb{R}} f(y_i^N, \varphi(s,y_i^N, g_{I^*(i)}) , z )m_l^N(s,dz)\Big|  . \nonumber
\end{align}
It follows that
\begin{align*}
\Delta_{il}(s)\le &\Big|\int_{\mathbb{R}}f(y_i^\infty,\varphi(s, y_i^\infty, g_{I^*(i)}  )  , z)\mu_{I_l^*} (s,dz)\\
& -\int f(y_i^N, \varphi(s,y_i^N, g_{I^*(i)}) , z )\mu_{I_l^*}(s,dz)  \Big|\\
&+ \Big|\int_{\mathbb{R}} f(y_i^N, \varphi(s,y_i^N, g_{I^*(i)}) , z )\mu_{I_l^*}(s,dz) \\
&- \int_{\mathbb{R}} f(y_i^N, \varphi(s,y_i^N, g_{I^*(i)}) , z )m_l^N(s,dz) \Big| \\
\eqqcolon & \ \Delta_{il1}(s) +\Delta_{il2}(s).
\end{align*}
By the Lipschitz condition (H2), for any fixed $y\in \mathbb{R}$, we have
\begin{align*}
&\Big |\int_{\mathbb{R}} f(y, \varphi(s,y, g_{I^*(i)}) , z )\mu_{I_l^*}(s,dz) - \int_{\mathbb{R}} f(y, \varphi(s,y, g_{I^*(i)}) , z )m_l^N(s,dz)\Big|\\
=&| E f(y, \varphi(s,y, g_{I^*(i)}) , y_j^\infty ) - E f(y, \varphi(s,y, g_{I^*(i)}) , y_j^N )|\\
\le & CE|y_j^\infty(s) -y_j^N(s) |,
\end{align*}
where $j\in \maC_l$ and we have used the fact that $y_i^\infty(t)$ in \eqref{yiinfphi} has the law $\mu_{I^*(i)}(t)$ and that $y_j^N(t) $ has the law $m_l^N(t)$.      Consequently, we have
for $j\in \maC_l$, with probability one,
\begin{align}
&\Delta_{il2}(s)
%|\int_{\mathbb{R}} f(y_i^N, \varphi(t,y_i^N, g_{I^*(i)}) , z )\mu_{I_l^*}(dz) %ds \nonumber\\
%&\qquad - \int_{\mathbb{R}} f(y_i^N, \varphi(t,y_i^N, g_{I^*(i)}) , z )m_l^N(dz) %ds|
  \le  CE|y_j^\infty(s) -y_j^N (s)|. \label{mumyy}
\end{align}
We  estimate $\Delta_{kl1}$ using the Lipschitz property of $f$ and $\varphi_{I^*(i)}$.  Now it follows that
\begin{align}
E\Delta_{il}(s)\le C E|y_i^\infty(s) -y_i^N (s) | +  CE|y_j^\infty(s) -y_j^N (s) |, \quad j\in \maC_l .\nonumber
\end{align}
We similarly estimate the difference term involving $f_0$. Therefore,
\begin{align*}
E|y_i^\infty(t) -y_i^N(t)|\le \ &
C\int_0^t E|y_i^\infty -y_i^N|ds  +\int_0^t
E(|\xi^i_{k,2}| + |\zeta^i_k|)ds\\
&+    \frac{1}{M_k}\sum_{l=1}^{M_k}
g^k_{\maC(i) \maC_l}\int_0^t E\Delta_{il}ds\\
\le & C_1\int_0^t \max_i E|y_i^\infty -y_i^N|ds  +\int_0^t
E(|\xi^i_{k,2}| + |\zeta^i_k|)ds\\
&+    \frac{C}{M_k}\sum_{l=1}^{M_k}
g^k_{\maC(i) \maC_l}\int_0^t \max_jE|y_j^\infty-y_j^N |ds \\
\le  & 2 C_2\int_0^t \max_i E|y_i^\infty -y_i^N|ds  +\int_0^t
E(|\xi^i_{k,2}| + |\zeta^i_k|)ds.
\end{align*}
Consequently,
\begin{align*}
\max_iE|y_i^\infty(t) -y_i^N(t)|\le \ &    2 C_2\int_0^t \max_i E|y_i^\infty -y_i^N|ds  +\max_i\int_0^t
E(|\xi^i_{k,2}| + |\zeta^i_k|)ds.
\end{align*}
By Gronwall's lemma,
\begin{align}
\sup_{0\le t\le T}\max_iE|y_i^\infty(t) -y_i^N(t)|\le C\max_i \int_0^T
E(|\xi^i_{k,2}| + |\zeta^i_k|)ds. \label{yiyinfxi}
\end{align}

To estimate \eqref{zetaldb}, by (H2) we derive
\begin{align*}
\zeta^i_{kl,\beta} \coloneqq & \Big|\int_{\mathbb{R}}f(y_i^\infty,\varphi(t, y_i^\infty, g_{I^*(i)}  )  , z) [\mu_\beta(t,dz)-\mu_{I_l^*} (t,dz)] \Big|\\
 =&\Big|\int_{\mathbb{R}^2}[ f(y_i^\infty,\varphi(t, y_i^\infty, g_{I^*(i)}  )  , z_1) -  f(y_i^\infty,\varphi(t, y_i^\infty, g_{I^*(i)}  )  , z_2)]
\widehat\gamma(dz_1, dz_2) \Big|\\
\le & C \int_{\mathbb{R}^2} |z_1-z_2 | \widehat\gamma(dz_1, dz_2),
\end{align*}
where the probability measure $\widehat\gamma$ is any coupling of $\mu_\beta(t)$ and $\mu_{I_l^*}(t)$ and $C$ is the Lipschitz constant of $f$. Since the coupling $\widehat \gamma$ is arbitrary, we have $\zeta^i_{kl,\beta}\le C W_1(\mu_\beta(t), \mu_{I^*(i)}(t))$.
Denote
$\delta_k^\mu=\sup_{l\le M_k}\sup_{\beta\in I_l,t\le T}
W_1(\mu_\beta(t),
\mu_{I_l^*}(t)). $
Then with probability one,
\begin{align}
|\zeta^i_{kl}(t)| \le C \delta^\mu_k/M_k \nonumber
\end{align}
in view of \eqref{zetaldb},
and therefore $\max_i|\zeta^i_k(t)|\le C\delta_k^\mu$. Note that $\delta^\mu_k\to 0$ as $k\to \infty$ by Lemma \ref{lemma:wasscnt}.
Recalling \eqref{xik2to0},  the right hand side of \eqref{yiyinfxi} tends to 0 as $k\to \infty$.
This completes the proof.
\end{proof}

\begin{lemma} \label{lemma:xhyinf}
  %\begin{align}
$ \lim_{N\to \infty}\sup_{t,i\le N} E|\hat x_i^N- y_i^\infty|=0$.
 %\end{align}
\end{lemma}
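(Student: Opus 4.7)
The plan is to obtain this bound as an immediate corollary of the two preceding lemmas via the triangle inequality. Specifically, Systems $A$, $C$, and $D$ were set up so that $y_i^N$ (System $C$) serves as the intermediate bridge between the finite-population graphon-based closed-loop trajectories $\hat x_i^N$ (System $A$) and the idealized infinite-population GMFG trajectories $y_i^\infty$ (System $D$). Both relevant gaps have already been controlled: Lemma \ref{lemma:xNyN} gives $\epsilon_{1,N}=\sup_{i\le N,t}E|\hat x_i^N(t)-y_i^N(t)|\to 0$, and Lemma \ref{lemma:yNyinf} gives $\epsilon_{2,N}=\sup_{i\le N,t}E|y_i^N(t)-y_i^\infty(t)|\to 0$, both as $N\to\infty$ (equivalently $k\to\infty$, under (H9)).

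Concretely, I would simply write, for each $t\in[0,T]$ and each $i\le N$,
\begin{align*}
E|\hat x_i^N(t)-y_i^\infty(t)|
\le E|\hat x_i^N(t)-y_i^N(t)|+E|y_i^N(t)-y_i^\infty(t)|
\le \epsilon_{1,N}+\epsilon_{2,N}.
\end{align*}
Taking the supremum over $t\in[0,T]$ and $i\le N$ gives
\begin{align*}
\sup_{t,\ i\le N}E|\hat x_i^N(t)-y_i^\infty(t)|\le \epsilon_{1,N}+\epsilon_{2,N},
\end{align*}
and the right-hand side tends to $0$ as $N\to\infty$ by Lemmas \ref{lemma:xNyN} and \ref{lemma:yNyinf}.

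There is essentially no obstacle here; all the analytic work (handling the intra- and inter-cluster couplings, the empirical-to-law approximation via independence of the $y_i^N$'s, and the graphon step-function approximation of $g$) was done in proving the two intermediate lemmas. The only subtlety worth flagging is that the supremum over $i$ in the final statement ranges over all agents in the finite-graph game and is independent of the choice of cluster $\mathcal{C}_l$; this is already built into the definitions of $\epsilon_{1,N}$ and $\epsilon_{2,N}$, so no additional uniformization argument is needed. This lemma will then be the key ingredient controlling the non-unilateral agents in the subsequent $\epsilon$-Nash estimate, combined with the corresponding estimate $\epsilon_{3,N}\to 0$ for the unilaterally deviating agent.
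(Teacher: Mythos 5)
Your argument is correct and is exactly the paper's proof: the lemma is obtained by the triangle inequality from Lemmas \ref{lemma:xNyN} and \ref{lemma:yNyinf}, which give $\epsilon_{1,N}\to 0$ and $\epsilon_{2,N}\to 0$. Nothing further is needed.
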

\proof
The lemma follows from Lemmas \ref{lemma:xNyN} and \ref{lemma:yNyinf}.
\endproof

\begin{lemma}\label{lemma:e3}
$\lim_{N\to \infty} \epsilon_{3,N}=0.
$
\end{lemma}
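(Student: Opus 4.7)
The plan is to piggyback on Lemma~\ref{lemma:xhyinf}. For each $j\ne \cki$, I would compare $x_j^N$ from System~$B$ in \eqref{uijne} with $\hat x_j^N$ from System~$A$ in \eqref{xNdeccontrol}, driven by the same Brownian motion $w_j$, and show that this deviation vanishes uniformly in the unilateral control $u_\cki^N$. The conclusion then follows by the triangle inequality together with the bound $\sup_{i,t}E|\hat x_i^N-y_i^\infty|\to 0$ already established.

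Fix $j\ne \cki$. Subtracting the two SDEs, $x_j^N(t)-\hat x_j^N(t)$ is the time integral of a drift difference that splits naturally into an intra-cluster and an inter-cluster part. In each part I would further separate the single summand corresponding to agent $\mathcal{A}_\cki$ from the summands indexed by $i\ne \cki$. For the $i\ne \cki$ summands, $(x_i^N,x_j^N)$ and $(\hat x_i^N,\hat x_j^N)$ differ only through state deviations, so by (H2), (H3), and the uniform Lipschitz continuity of $\varphi$ in $x$ from Lemma~\ref{lemma:unilip}(iii), these summands are bounded pointwise by $C|x_j^N-\hat x_j^N|+C|x_i^N-\hat x_i^N|$. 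For the $i=\cki$ summand I would \emph{not} attempt to control $|x_\cki^N-\hat x_\cki^N|$ (which would require knowing the unknown $u_\cki^N$); instead I would use the pointwise bounds from (H2) to dominate this contribution by at most $2\|f_0\|_\infty/|\maC(\cki)|$ from the intra-cluster term (when $\cki\in \maC(j)$) and by at most $2\,g^k_{\maC(j)\maC(\cki)}\|f\|_\infty/(M_k|\maC(\cki)|)$ from the inter-cluster term. Both are of order $1/\min_l|\maC_l|$ and independent of $u_\cki^N$.

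Setting $\Delta^N(t)\coloneqq \sup_{j\ne \cki} E|x_j^N(t)-\hat x_j^N(t)|$ and using $|g^k|\le 1$ to bound the two weighted $L^1$-averages from the $i\ne \cki$ contributions by $\Delta^N(s)$ after taking expectation and the supremum over $j$, I would obtain
\begin{align*}
\Delta^N(t)\le C_1\int_0^t \Delta^N(s)\,ds+\frac{C_2}{\min_l|\maC_l|},
\end{align*}
with $C_1,C_2$ independent of $u_\cki^N$. By (H9) and Gronwall's lemma, $\sup_{0\le t\le T}\Delta^N(t)\to 0$ as $k\to \infty$, uniformly in $u_\cki^N$. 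Combining this with Lemma~\ref{lemma:xhyinf} via $\epsilon_{3,N}\le \Delta^N+\sup_{i,t}E|\hat x_i^N-y_i^\infty|$ yields the lemma. The principal obstacle is preserving uniformity over the arbitrary feedback $u_\cki^N$; this is resolved by treating the single perturbed agent via pointwise boundedness of $f_0$ and $f$ rather than via any Lipschitz estimate in its state, which would otherwise necessitate a separate bound on $x_\cki^N-\hat x_\cki^N$ that is unavailable here.
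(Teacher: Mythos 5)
Your proposal is correct and follows essentially the same route as the paper: compare the non-unilateral agents' states in System $B$ with those in System $A$ under the same Brownian motions, obtain a bound of order $1/\min_l|\maC_l|$ uniform in $u_{\cki}^N$ via Gronwall's lemma (the perturbed agent's contribution being controlled through the boundedness of $f_0,f$ and its $1/|\maC(\cki)|$, $1/(M_k|\maC(\cki)|)$ weights, exactly as needed), and then conclude by the triangle inequality with Lemma \ref{lemma:xhyinf}. The paper states this argument only as a sketch; your write-up supplies the details consistently with it.
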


\proof For $(\hat x_1^N,\ldots, \hat x_N^N )$ in System $A$ and
   $(x_1^N,\ldots,  x_N^N  )$ in System $B$,  we compare the SDEs of $\hat x_j^N$ and $x_j^N$ and apply Gronwall's lemma to obtain
$$
\sup_{u_{\cki}^N,t, j\ne \cki}|x_j^N-\hat x_j^N|\le  \frac{C}{{\min_l |\maC_l|}}.
$$
  Next by Lemma \ref{lemma:xhyinf}, we obtain the desired estimate. \endproof

Consider the limiting optimal control problem with dynamics  and cost
\begin{align}
&dx_{\cki}^\infty = \widetilde f[x_{\cki}^\infty, u_{\cki}, \mu_G; g_{I^*({\cki}) }] dt +\sigma dw_{\cki}, \label{yiofD}\\
&J_{\cki}^*= E\int_0^T\widetilde l [x_{\cki}^\infty, u_{\cki}, \mu_G; g_{I^*({\cki})}]dt ,\label{jistar}
\end{align}
where $x_{\cki}^\infty (0)= x_{\cki}^N(0)$ and $\mu_G(\cdot)$ is given by the GMFG equation system.

To establish the $\epsilon$-Nash equilibrium property, the cost of agent ${\cal A}_{\cki}$ within the $N$ agents can be written using the mean field limit dynamics and cost, both involving $\mu_G(\cdot)$, up to a small error term that can be bounded uniformly with respect to $u_{\cki}^N$, while ${\cal A}_{\cki}$ chooses its control $u_{\cki}^N$.
It can further have little improvement due to the best response property of $\varphi(t, x_{\cki}|\mu_G(\cdot); g_{I^*({\cki})})$ within the mean field limit.
We rewrite the first  equation in \eqref{uijne} of System $B$ as
\begin{align}
dx_{\cki}^N = \widetilde f[x_{\cki}^N, u_{\cki}^N, \mu_G; g_{I^*({\cki})}] dt
+ (\delta_{f_0}^k(t)+\delta_f^k(t)) dt  +\sigma dw_{\cki},\label{xindel}
\end{align}
where $\delta_{f_0}^k=f_0(x_{\cki}^N, u_{\cki}^N, {\maC({\cki})})- f_0[x_{\cki}^N, u_{\cki}^N, \mu_{I^*({\cki})}]  $  and  $\delta_f^k=f_{G_k}(x_{\cki}^N, u_{\cki}^N, g_{\maC({\cki})}^k)- f[x_{\cki}^N, u_{\cki}^N, \mu_G; g_{I^*({\cki}) }] $.  Similarly the cost of ${\mathcal A}_{\cki}$ in System $B$ is written as
\begin{align}
J_{\cki}^N(u_{\cki}^N)= E\int_0^T (\widetilde l[x_{\cki}^N, u_{\cki}^N, \mu_G; g_{I^*({\cki})}]
+\delta_{l_0}^k(t)+ \delta_l^k (t))dt, \nonumber
\end{align}
where we have
$\delta_{l_0}^k= l_{0}(x_{\cki}^N, u_{\cki}^N, {\maC({\cki})})- l_0[x_{\cki}^N, u_{\cki}^N, \mu_{I^*({\cki})}]  $ and $\delta_l^k= l_{G_k}(x_{\cki}^N, u_{\cki}^N, g_{\maC({\cki})}^k)- l[x_{\cki}^N, u_{\cki}^N, \mu_G; g_{I^*({\cki}) }] $.
Note that all other agents have applied the control laws $\varphi(t,x_j^N, g_{I^*(j)})$, $j\ne {\cki}$. So we only indicate $u_{\cki}^N$ within $J_{\cki}^N$.
It is clear that  $\delta_{f_0}^k$,  $\delta_f^k$, $\delta_{l_0}^k$, and  $\delta_l^k$ are all affected by the control law $u_{\cki}^N$. Let $ {\mathbold y}^\infty_t =(y_1^\infty(t), \ldots, y_N^\infty(t) )$ for System $D$.  Our next step is to derive a uniform upper bounded for
$E|\delta_f^k|$ and $E|\delta_l^k|$ with respect to $u_{\cki}^N$.

Define the two random variables
\begin{align*}
&\Delta_f^k(z, u, {\mathbold y}^\infty_t )= \frac{1}{M_k}\sum_{l=1}^{M_k}g^k_{{\mathcal C}({\cki}) {\mathcal C}_l}
\frac{1}{|{\mathcal C}_l|}\sum_{j\in {\mathcal C}_l}f(z,u, y_j^\infty(t))
 - f[z, u, \mu_G(t); g_{I^*({\cki}) }],\\
&\Delta_l^k(z, u,{\mathbold y}^\infty_t  )= \frac{1}{M_k}\sum_{l=1}^{M_k}g^k_{{\mathcal C}({\cki}) {\mathcal C}_l}
\frac{1}{|{\mathcal C}_l|}\sum_{j\in {\mathcal C}_l}l(z,u, y_j^\infty(t))  -
{ l}[z, u, \mu_G(t); g_{I^*({\cki}) }],
\end{align*}
where $z\in \mathbb{R}$ and $u\in U$ are deterministic and  fixed.
\begin{lemma}\label{lemma:DD2}
We have
\begin{align}
\lim_{k\to \infty}\sup_{z,u,t}E (|\Delta_f^k(z, u,{\mathbold y}^\infty_t)|^2+|\Delta_l^k(z, u,{\mathbold y}^\infty_t )|^2)=0.\label{Edd}
\end{align}
\end{lemma}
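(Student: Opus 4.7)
The plan is to decompose $\Delta_f^k$ (and symmetrically $\Delta_l^k$) into three pieces corresponding to three distinct approximation errors, then show each piece tends to zero in $L^2$ uniformly in $(z,u,t)$. Writing $\bar f_l(z,u,t) \coloneqq \int_{\mathbb R} f(z,u,y)\mu_{I_l^*}(t,dy)$, I would insert the telescoping sum
\begin{align*}
\Delta_f^k(z,u,{\mathbold y}_t^\infty)
=\ & \frac{1}{M_k}\sum_{l=1}^{M_k} g^k_{{\mathcal C}({\cki}){\mathcal C}_l}\Bigl[\tfrac{1}{|{\mathcal C}_l|}\sum_{j\in{\mathcal C}_l} f(z,u,y_j^\infty(t))-\bar f_l(z,u,t)\Bigr]\\
&+\sum_{l=1}^{M_k}\Bigl[\tfrac{1}{M_k}g^k_{{\mathcal C}({\cki}){\mathcal C}_l}-\int_{I_l} g(I^*({\cki}),\beta)d\beta\Bigr]\bar f_l(z,u,t)\\
&+\sum_{l=1}^{M_k}\int_{I_l}g(I^*({\cki}),\beta)[\bar f_l(z,u,t)-\bar f_\beta(z,u,t)]d\beta\\
\eqqcolon\ & E_k^{\rm LLN}+E_k^{\rm H11}+E_k^{\rm cnt}.
\end{align*}

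For $E_k^{\rm LLN}$, the key observation is that within any cluster ${\mathcal C}_l$ the processes $\{y_j^\infty\}_{j\in{\mathcal C}_l}$ are i.i.d. (same coefficients $g_{I_l^*}$, same control law $\varphi(\cdot,\cdot,g_{I_l^*})$, independent driving Brownian motions and initial conditions), and are independent across $l$. Therefore, using boundedness of $f$ together with the conditional independence, the cross-cluster expectation vanishes and
\[
E|E_k^{\rm LLN}|^2 \le \frac{C\|f\|_\infty^2}{M_k^2}\sum_{l=1}^{M_k}\frac{|g^k_{{\mathcal C}({\cki}){\mathcal C}_l}|^2}{|{\mathcal C}_l|}\le \frac{C\|f\|_\infty^2}{M_k\min_l|{\mathcal C}_l|},
\]
uniformly in $(z,u,t)$, which vanishes by (H9). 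For $E_k^{\rm H11}$, the factor $\bar f_l$ is bounded by $\|f\|_\infty$ uniformly in $(z,u,t,l)$, so the pointwise bound is $\|f\|_\infty\cdot\max_i\sum_l|\tfrac{1}{M_k}g^k_{{\mathcal C}_i{\mathcal C}_l}-\int_{I_l}g(I_i^*,\beta)d\beta|\to 0$ by (H11). For $E_k^{\rm cnt}$, the Lipschitz property of $f$ in $y$ from (H2) gives $|\bar f_l(z,u,t)-\bar f_\beta(z,u,t)|\le C\,W_1(\mu_\beta(t),\mu_{I_l^*}(t))$ (by the usual coupling argument used already in the proof of Lemma \ref{lemma:yNyinf}), so $|E_k^{\rm cnt}|\le C\delta_k^\mu$ with $\delta_k^\mu\to 0$ by the same Wasserstein continuity result cited there.

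Combining the three bounds yields $\sup_{z,u,t}E|\Delta_f^k|^2\to 0$. The identical decomposition with $l$ replacing $f$ handles $\Delta_l^k$, since $l_0,l$ are likewise bounded and Lipschitz by (H2). The main technical point to verify carefully is the within-cluster i.i.d. structure of $\{y_j^\infty\}_{j\in{\mathcal C}_l}$ that drives the $1/|{\mathcal C}_l|$ decay in the $E_k^{\rm LLN}$ term; once this is clean, the rest consists of invoking (H11) and the Wasserstein continuity of $\beta\mapsto\mu_\beta(t)$. Uniformity in $(z,u,t)$ is preserved throughout because $f$, $l$ and their Lipschitz constants are uniform, and the bounds in each piece depend on $(z,u,t)$ only through such uniform constants.
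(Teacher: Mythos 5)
Your proposal is correct and follows essentially the same route as the paper's (very brief) proof: approximate $\mu_\beta$ by the finitely many cluster measures $\mu_{I_l^*}$, control the resulting error via the Wasserstein continuity of Lemma \ref{lemma:wasscnt}, use (H11) for the weight discrepancy, and exploit the within-cluster i.i.d.\ structure of the $y_j^\infty$ (independent Brownian motions and initial states, identical dynamics at $I_l^*$) to get the $O\bigl(1/(M_k\min_l|\maC_l|)\bigr)$ variance bound, which is exactly what the paper means by ``expanding the two quadratic terms.'' Your three-term telescoping simply makes explicit the details the paper leaves as a sketch, and the uniformity in $(z,u,t)$ is handled correctly.
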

\begin{proof}
As in the proof of Lemma  \ref{lemma:yNyinf}, we approximate $\mu_\beta$, $\beta\in [0,1]$, by using a finite number of points of $\beta$, and next expand the two quadratic terms in \eqref{Edd}. The estimate is carried out using (H11) and  Lemma \ref{lemma:wasscnt}.
\end{proof}

\begin{lemma}\label{lemma:PDel}
For any given constant $C_z>0$ and any $\epsilon\in (0,1)$,
\begin{align*}
&\lim_{k\to \infty}\inf_tP(\cap_{(z,u)\in [-C_z, C_z]\times U}\{ |\Delta_f^k(z,u, {\mathbold y}^\infty_t)|\le \epsilon\} )=1,\\
& \lim_{k\to \infty}\inf_t P(\cap_{(z,u)\in [-C_z, C_z]\times U}\{ |\Delta_l^k(z,u,{\mathbold y}^\infty_t)|\le \epsilon\} )=1.
\end{align*}
\end{lemma}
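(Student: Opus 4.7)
The plan is to upgrade the mean-square bound in Lemma \ref{lemma:DD2} to a uniform-in-$(z,u)$ high probability bound by combining Markov's inequality on a finite net with a uniform Lipschitz estimate in the parameters.

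\emph{Step 1: Uniform Lipschitz dependence on $(z,u)$.} For fixed $\omega$ and $t$, view $\Delta_f^k(z,u,\mathbold{y}^\infty_t)$ as a function of $(z,u)\in [-C_z,C_z]\times U$. By (H2)--(H3), $f(z,u,y)$ is Lipschitz in $(z,u)$ with a constant $L_f$ independent of $y$, and the second piece $f[z,u,\mu_G(t);g_{I^*(\iota)}]$ inherits the same Lipschitz constant after integration against $g(I^*(\iota),\beta)\mu_\beta(t,dz)$ (using $0\le g\le 1$). Moreover $|\frac{1}{M_k}\sum_l g^k_{\maC(\iota)\maC_l}|\le 1$. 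Hence
\[
|\Delta_f^k(z_1,u_1,\mathbold{y}^\infty_t)-\Delta_f^k(z_2,u_2,\mathbold{y}^\infty_t)|\le 2L_f(|z_1-z_2|+|u_1-u_2|),
\]
with probability one, uniformly in $k$, $t$, and $\iota$. The identical argument with $l$ in place of $f$ gives the same conclusion for $\Delta_l^k$.

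\emph{Step 2: Finite net and Markov.} Fix $\epsilon\in(0,1)$ and set $\delta=\epsilon/(4L_f)$. Since $[-C_z,C_z]\times U$ is compact, pick a $\delta$-net $\{(z_j,u_j)\}_{j=1}^{M_\delta}$, with $M_\delta$ independent of $k$. By Markov's inequality and Lemma \ref{lemma:DD2},
\[
\sup_t P\bigl(|\Delta_f^k(z_j,u_j,\mathbold{y}^\infty_t)|>\epsilon/2\bigr)\le \frac{4}{\epsilon^2}\sup_{z,u,t}E|\Delta_f^k(z,u,\mathbold{y}^\infty_t)|^2\xrightarrow[k\to\infty]{}0.
\]
A union bound over the $M_\delta$ net points yields
\[
\lim_{k\to\infty}\sup_t P\Bigl(\max_{1\le j\le M_\delta}|\Delta_f^k(z_j,u_j,\mathbold{y}^\infty_t)|>\epsilon/2\Bigr)=0.
\]

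\emph{Step 3: Net to continuum via Lipschitz.} On the complementary event, for any $(z,u)\in [-C_z,C_z]\times U$ pick the nearest net point $(z_j,u_j)$; Step 1 gives $|\Delta_f^k(z,u,\mathbold{y}^\infty_t)-\Delta_f^k(z_j,u_j,\mathbold{y}^\infty_t)|\le 2L_f\cdot 2\delta=\epsilon/2$, so $|\Delta_f^k(z,u,\mathbold{y}^\infty_t)|\le \epsilon$. Therefore
\[
\liminf_{k\to\infty}\inf_t P\Bigl(\bigcap_{(z,u)\in[-C_z,C_z]\times U}\{|\Delta_f^k(z,u,\mathbold{y}^\infty_t)|\le\epsilon\}\Bigr)=1,
\]
and the analogous chain of estimates applied to $\Delta_l^k$ gives the second claim.

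The only real obstacle is ensuring the $(z,u)$-Lipschitz constant is independent of $k$, $t$, and the random path $\mathbold{y}^\infty_t$; this is exactly where the uniform Lipschitz hypotheses (H2)--(H3) together with $0\le g^k,g\le 1$ are used, so the estimate does not degrade as $k\to\infty$.
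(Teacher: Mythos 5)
Your proposal is correct and follows essentially the same route as the paper: a finite net in $(z,u)$, Chebyshev/Markov applied to the second-moment bound of Lemma \ref{lemma:DD2} at each net point with a union bound, and then transfer to all of $[-C_z,C_z]\times U$ via the uniform (in $k$, $t$, $\omega$) continuity of $\Delta_f^k$, $\Delta_l^k$ in $(z,u)$ coming from (H2)--(H3). The paper additionally notes in one line that the uncountable intersection is a well-defined event because continuity lets it be expressed through countably many $(z,u)$; your Step 1 supplies exactly that continuity, so this is only a remark worth adding, not a gap.
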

\begin{proof}
We establish the first limit, and may deal with the second one in the same way.
Note that the event
\begin{align}
{\mathcal E}_{fC_z}^k\coloneqq \cap_{(z,u)\in [-C_z, C_z]\times U}\{ |\Delta_f^k(z,u,
{\mathbold y}^\infty_t  )|\le \epsilon\} \label{efcz}
\end{align}
is well defined since $\Delta_f^k$ is  continuous in $(z,u)$ and the intersection may be equivalently expressed  using only a countable number of values of $(z,u)$ in $[-C_z, C_z]\times U$.

Take any $\epsilon\in (0,1)$.
By (H2) and (H3), we can find $\delta_\epsilon>0$ such that
$|\Delta_f^k(z,u, {\mathbold y}^\infty_t )- \Delta_f^k(z',u',{\mathbold y}^\infty_t
  )| \le \epsilon/2$ whenever $|z-z'|+|u-u'|\le \delta_\epsilon$. For the selected $\delta_\epsilon$, we can find a fixed $p_0$ and $(z^j,u^j)\in [-C_z, C_z]\times U$, $j=1, \ldots, p_0$ such that for any $(z,u)\in [-C_z, C_z]\times U$, there exists some $j_0$ ensuring $|z-z^{j_0}|+|u-u^{j_0}|\le \delta_\epsilon$.

By Lemma \ref{lemma:DD2} and  Markov's inequality,  for any $\delta>0$,   there exists $K_{\delta, p_0 }$ such that for all $k\ge K_{ \delta, p_0}$,
 we have
\begin{align}
P(\{ |\Delta_f^k(z^j,u^j, {\mathbold y}^\infty_t  )|\le \epsilon/2\} )\ge 1-\delta/p_0, \quad \forall j,t.\label{p1del}
\end{align}
Let ${\mathcal E}^k_j $ denote the event $\{ |\Delta_f^k(z^j,u^j, {\mathbold y}^\infty_t)|\le \epsilon/2\}  $. By \eqref{p1del}, $P(\cap_{j=1}^{p_0}{\cal E}_j^k)\ge 1-\delta  $ for $k\ge  K_{\delta, p_0 } $.
Now if $\omega\in {\cal E}^k\coloneqq  \cap_{j=1}^{p_0}{\cal E}_j^k$, $k\ge K_{\delta, p_0 }  $, then for any  $(z,u)\in [-C_z, C_z]\times U$, we have
$
|\Delta_f^k(z,u,{\mathbold y}^\infty_t )|\le \epsilon.
$
Hence
$
{\cal E}^{k}\subset {\mathcal E}_{fC_z}^k .
 %\cap_{(z,u)\in [-C_z, C_z]\times U}\{ |\Delta_f^k(z,u,
%{\mathbold y}^\infty_t)|\le \epsilon\}
$
It follows that for all $k\ge  K_{\delta, p_0 } $,
$
P( {\mathcal E}_{fC_z}^k
 %\cap_{(z,u)\in [-C_z, C_z]\times U}\{ |\Delta_f^k(z,u,{\mathbold y}^\infty_t)|\le  %\epsilon\}
 )\ge 1-\delta.
$
Since $\delta\in (0, 1)$ is arbitrary and $ K_{\delta, p_0 }  $ does not depend on $t$, the first limit follows.
\end{proof}

\begin{lemma}\label{lemma:DDxuy}
We have
\begin{align*}
\lim_{k\to \infty}\sup_{t,u_{\cki}^N}E(|\Delta_f^k (x_{\cki}^N(t), u_{\cki}^N(t), {\mathbold y}^\infty_t)|+ |\Delta_l^k (x_{\cki}^N(t), u_{\cki}^N(t), {\mathbold y}^\infty_t)|)=0.
\end{align*}
\end{lemma}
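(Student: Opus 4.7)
The plan is to reduce Lemma \ref{lemma:DDxuy} to Lemma \ref{lemma:PDel} by a localization argument that bypasses the measurability of the random pair $(x_{\cki}^N(t), u_{\cki}^N(t))$. Lemma \ref{lemma:PDel} gives a high-probability bound that is uniform over the deterministic box $[-C_z, C_z] \times U$, so once I can confine $x_{\cki}^N(t)$ to such a box with high probability uniformly in the unilateral strategy, the random arguments can be replaced by a pointwise supremum and the deterministic result applied.

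First I would derive a tail estimate on $x_{\cki}^N$ that is uniform in the control. Since $f_0$ and $f$ are uniformly bounded by (H2), the SDE \eqref{xindel} (equivalently, the first equation of \eqref{uijne}) yields the pathwise bound $|x_{\cki}^N(t)| \le |x_{\cki}^N(0)| + CT + \sigma \sup_{s \le T}|w_{\cki}(s)|$, with $C$ depending only on the sup-norms of $f_0$ and $f$. Combined with (H10), this yields
\[
\lim_{C_z \to \infty} \sup_{u_{\cki}^N} \sup_{0 \le t \le T} P(|x_{\cki}^N(t)| > C_z) = 0,
\]
uniformly over admissible strategies because the right-hand side does not involve $u_{\cki}^N$ at all.

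Next I would exploit the $\omega$-wise inequality, valid since $u_{\cki}^N(t) \in U$ by admissibility:
\[
|\Delta_f^k(x_{\cki}^N(t), u_{\cki}^N(t), {\mathbold y}^\infty_t)| \le \sup_{(z,u) \in [-C_z, C_z] \times U}|\Delta_f^k(z, u, {\mathbold y}^\infty_t)| + 2\|f\|_\infty \mathbf{1}_{\{|x_{\cki}^N(t)| > C_z\}}.
\]
Taking expectations, the second term is controlled by Step~1. For the first, I invoke Lemma \ref{lemma:PDel}: given any $\epsilon, \delta > 0$, there is $k_0$ such that for $k \ge k_0$ the event ${\mathcal E}_{fC_z}^k$ in \eqref{efcz} has probability at least $1 - \delta$ uniformly in $t$; on ${\mathcal E}_{fC_z}^k$ the supremum is at most $\epsilon$, on its complement at most $2\|f\|_\infty$, so
\[
E\Big[\sup_{(z,u) \in [-C_z, C_z] \times U}|\Delta_f^k(z, u, {\mathbold y}^\infty_t)|\Big] \le \epsilon + 2\|f\|_\infty \delta.
\]
Given $\epsilon_0 > 0$, first choose $C_z$ so that $2\|f\|_\infty \sup_{u_{\cki}^N}\sup_t P(|x_{\cki}^N(t)| > C_z) \le \epsilon_0/2$, then pick $\epsilon, \delta$ with $\epsilon + 2\|f\|_\infty \delta \le \epsilon_0/2$; for $k$ large, $\sup_{t, u_{\cki}^N} E|\Delta_f^k| < \epsilon_0$. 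The same chain with $\|l\|_\infty$ in place of $\|f\|_\infty$ handles $\Delta_l^k$.

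The main obstacle, and indeed the reason for proving Lemmas \ref{lemma:DD2} and \ref{lemma:PDel} as preparatory steps, is the uniformity in $u_{\cki}^N$. Since the unilateral agent may use any centralized Lipschitz feedback of the full state vector, $x_{\cki}^N$ is in general neither asymptotically independent of ${\mathbold y}^\infty_t$ nor has a strategy-independent joint law; it is precisely the deterministic-supremum bound of Lemma \ref{lemma:PDel}, combined with the control-free a priori bound of Step~1, that allows the conclusion to hold without any distributional hypothesis on $(x_{\cki}^N(t), u_{\cki}^N(t))$.
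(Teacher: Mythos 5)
Your proposal is correct and follows essentially the same route as the paper: a uniform-in-control localization of $x_{\cki}^N(t)$ to a compact box $[-C_z,C_z]$ (via the bounded drift from (H2), compact $U$ from (H1), and the initial moment bound (H10)), combined with the high-probability uniform bound over $[-C_z,C_z]\times U$ from Lemma \ref{lemma:PDel} and the boundedness of $\Delta_f^k$, $\Delta_l^k$ to pass to expectations. The only cosmetic difference is that you bound the integrand by a supremum over the box plus an indicator of the tail event, whereas the paper intersects the two events directly; the estimates are the same.
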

\begin{proof}
Fix any $\epsilon \in (0, 1)$. By (H1) and (H2) we can find a sufficiently large $C_z$, independent of $(k,N)$, such that for all $u_{\cki}^N(\cdot)$,
\begin{align}
P\Big(\sup_{0\le t\le T} |x_{\cki}^N(t)| \le C_z\Big)\ge 1-\epsilon.\nonumber
\end{align}
Denote ${\cal E}_x= \{\sup_{0\le t\le T} |x_{\cki}^N(t)| \le C_z\}$.
By Lemma \ref{lemma:PDel}, for the above $\epsilon$ and ${\mathcal E}^k_{fC_z}$  given by \eqref{efcz}, there exists $K_0$ independent of $t$ such that for all $k\ge K_0$,
 $$
P({\mathcal E}^k_{fC_z} )\ge 1-\epsilon.
$$
 %=\cap_{(z,u)\in [-C_z, C_z]\times U}\{
 %|\Delta_f^k(z,u,{\mathbold y}^\infty_t)|\le \epsilon\} .
Now if $\omega \in {\cal E}_x\cap {\mathcal E}^k_{fC_z} $, then
$|\Delta_f^k (x_{\cki}^N(t), u_{\cki}^N(t), {\mathbold y}^\infty_t)|\le \epsilon.$
We have
$P({\cal E}_x\cap {\mathcal E}^k_{fC_z})\ge 1-2\epsilon$, and so
$$
P(|\Delta_f^k (x_{\cki}^N(t), u_{\cki}^N(t), {\mathbold y}^\infty_t)|\le \epsilon)
\ge P({\cal E}_x\cap {\mathcal E}^k_{fC_z})  \ge 1-2\epsilon.
$$
It follows that for all $k\ge K_0$,
\begin{align}
E|\Delta_f^k (x_{\cki}^N(t), u_{\cki}^N(t), {\mathbold y}^\infty_t)|\le \epsilon + 2\epsilon C,\nonumber
\end{align}
where $C$ does not depend on $(u_{\cki}^N(\cdot), t)$.
The bound for $\Delta_l^k$ is similarly obtained.
\end{proof}

\begin{lemma}\label{lemma:edd}
We have
\begin{align}
\lim_{k\to \infty}\sup_{t,u_{\cki}^N(\cdot)}E(|\delta_f^k| +|\delta_l^k|)=0.\nonumber
\end{align}
\end{lemma}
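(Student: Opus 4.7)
My plan is to reduce $\delta_f^k$ (and, by an identical argument, $\delta_l^k$) to a sum of two quantities whose vanishing has already been established, with one extra bookkeeping step to handle the unilateral agent's self-contribution. Concretely, I would rewrite
\[
\delta_f^k(t) \;=\; \Delta_f^k\bigl(x_{\cki}^N(t),u_{\cki}^N(t),{\mathbold y}^\infty_t\bigr) \;+\; A_k(t),
\]
where
\[
A_k(t) \;=\; \frac{1}{M_k}\sum_{l=1}^{M_k} g^k_{\maC(\cki)\maC_l}\,\frac{1}{|\maC_l|}\sum_{j\in\maC_l}\bigl[f(x_{\cki}^N,u_{\cki}^N,x_j^N) - f(x_{\cki}^N,u_{\cki}^N,y_j^\infty(t))\bigr].
\]
Lemma \ref{lemma:DDxuy} already gives $\sup_{t,u_{\cki}^N}E|\Delta_f^k(x_{\cki}^N,u_{\cki}^N,{\mathbold y}^\infty_t)|\to 0$, so the task reduces to showing $\sup_{t,u_{\cki}^N}E|A_k(t)|\to 0$.

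For the indices $j\neq\cki$ in $A_k$, I would apply the Lipschitz property of $f$ in its third argument from (H2) to get $|f(x_{\cki}^N,u_{\cki}^N,x_j^N)-f(x_{\cki}^N,u_{\cki}^N,y_j^\infty(t))|\le L\,|x_j^N(t)-y_j^\infty(t)|$; since $g^k_{\maC(\cki)\maC_l}\in[0,1]$ the total weight placed on such indices by $A_k$ is $\le 1$, so their expected contribution is at most $L\,\epsilon_{3,N}$, which tends to zero uniformly in $(t,u_{\cki}^N)$ by Lemma \ref{lemma:e3}. For the self-term $j=\cki$, I cannot invoke Lemma \ref{lemma:e3} because the unilateral control can drive $x_{\cki}^N$ arbitrarily far from $y_{\cki}^\infty$; however the coefficient of this single term is at most $1/(M_k|\maC(\cki)|)$, and uniform boundedness of $f$ from (H2) bounds the bracketed difference by $2\|f\|_\infty$, so this term contributes at most $2\|f\|_\infty/(M_k\min_l|\maC_l|)$, which vanishes uniformly under (H9).

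The very same scheme, with $f$ replaced by $l$ and $\Delta_f^k$ by $\Delta_l^k$, handles $\delta_l^k$ via the $l$-part of Lemma \ref{lemma:DDxuy}. The only step I would flag as genuinely nonroutine — and the would-be obstacle — is the failure of Lemma \ref{lemma:e3} to control agent $\cki$'s own state deviation; the rescue is that this agent occupies a vanishing share of its cluster's contribution to the graphon-averaged coupling term, so boundedness of $f$ (and of $l$) together with (H9) is enough to neutralize it, uniformly in the unilateral control.
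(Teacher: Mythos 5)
Your proposal is correct and follows essentially the same route as the paper's proof: write $\delta_f^k=\Delta_f^k(x_{\cki}^N,u_{\cki}^N,{\mathbold y}^\infty_t)$ plus the difference term, control the difference by the Lipschitz continuity of $f$ (resp.\ $l$) together with Lemma \ref{lemma:e3}, and conclude with Lemma \ref{lemma:DDxuy}. Your explicit treatment of the self-term $j=\cki$, which Lemma \ref{lemma:e3} does not cover and which you correctly neutralize via boundedness of $f,l$ and (H9), is a detail the paper's terse proof leaves implicit, and you handle it correctly.
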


\begin{proof} By  Lipschitz continuity of $(f,l)$,
we estimate
$E|\delta_f^k-\Delta_f^k(x_{\cki}^N, u_{\cki}^N,{\mathbold y}^\infty_t)|$ and
$E|\delta_l^k-\Delta_l^k(x_{\cki}^N, u_{\cki}^N,{\mathbold y}^\infty_t)|$,
and next apply Lemma \ref{lemma:e3} to show that they converge to zero as $k\to \infty$. Recalling Lemma   \ref{lemma:DDxuy}, we complete the proof.
\end{proof}

\begin{lemma}\label{lemma:edd0}
We have
\begin{align}
\lim_{k\to \infty}\sup_{t,u_{\cki}^N(\cdot)}E(|\delta_{f_0}^k| +|\delta_{l_0}^k|)=0.\nonumber
\end{align}
\end{lemma}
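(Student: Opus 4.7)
The plan is to follow the same four-step blueprint used to prove Lemma \ref{lemma:edd}, but with the \emph{intra-cluster} empirical average in place of the inter-cluster average weighted by $g^k$. The concentration input is now supplied by (H9) alone, since $|\maC(\cki)|\to\infty$, without reference to the graphon weights.

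First, in analogy with Lemma \ref{lemma:DD2}, define
\begin{align*}
\Delta_{f_0}^k(z,u,\mathbold{y}^\infty_t)&= \frac{1}{|\maC(\cki)|}\sum_{j\in\maC(\cki)} f_0(z,u,y_j^\infty(t))-f_0[z,u,\mu_{I^*(\cki)}(t)],\\
\Delta_{l_0}^k(z,u,\mathbold{y}^\infty_t)&= \frac{1}{|\maC(\cki)|}\sum_{j\in\maC(\cki)} l_0(z,u,y_j^\infty(t))-l_0[z,u,\mu_{I^*(\cki)}(t)].
\end{align*}
By System $D$, the processes $\{y_j^\infty:j\in\maC(\cki)\}$ are i.i.d. with common time-$t$ law $\mu_{I^*(\cki)}(t)$, since they share the coefficients $g_{I^*(\cki)}$ and independent Brownian motions; hence the centered summand has mean zero and (H2) bounds each term. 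A standard variance estimate then yields
$$
\sup_{z,u,t}E|\Delta_{f_0}^k(z,u,\mathbold{y}^\infty_t)|^2+\sup_{z,u,t}E|\Delta_{l_0}^k(z,u,\mathbold{y}^\infty_t)|^2\le \frac{C}{|\maC(\cki)|}\longrightarrow 0
$$
by (H9), which is the intra-cluster analogue of Lemma \ref{lemma:DD2}.

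Next, repeat the discretization argument of Lemma \ref{lemma:PDel}: for any $\epsilon\in(0,1)$ and $C_z>0$, choose a finite $\delta_\epsilon$-net $\{(z^j,u^j)\}_{j=1}^{p_0}\subset[-C_z,C_z]\times U$ using uniform continuity of $f_0,l_0$, and combine the variance bound with Markov's inequality at each net point to obtain
$$
\lim_{k\to\infty}\inf_t P\bigl(\cap_{(z,u)\in[-C_z,C_z]\times U}\{|\Delta_{f_0}^k(z,u,\mathbold{y}^\infty_t)|\le\epsilon\}\bigr)=1,
$$
and the same for $\Delta_{l_0}^k$. Then, following the argument of Lemma \ref{lemma:DDxuy}, pick $C_z$ so that $\sup_{u_{\cki}^N}P(\sup_{t\le T}|x_{\cki}^N(t)|\le C_z)\ge 1-\epsilon$, which is possible uniformly in $u_{\cki}^N$ by (H1), (H2) and (H10); intersecting this event with the one above yields
$$
\lim_{k\to\infty}\sup_{t,u_{\cki}^N}E\bigl(|\Delta_{f_0}^k(x_{\cki}^N,u_{\cki}^N,\mathbold{y}^\infty_t)|+|\Delta_{l_0}^k(x_{\cki}^N,u_{\cki}^N,\mathbold{y}^\infty_t)|\bigr)=0.
$$

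Finally, to transfer the conclusion from $\mathbold{y}^\infty$ to the actual states appearing in $\delta_{f_0}^k,\delta_{l_0}^k$, estimate
$$
E|\delta_{f_0}^k-\Delta_{f_0}^k(x_{\cki}^N,u_{\cki}^N,\mathbold{y}^\infty_t)|\le \frac{C}{|\maC(\cki)|}+\frac{C}{|\maC(\cki)|}\sum_{j\in\maC(\cki),\,j\ne\cki}E|x_j^N(t)-y_j^\infty(t)|,
$$
where the first term absorbs the self-index $j=\cki$ (for which $y_{\cki}^\infty$ is replaced by $x_{\cki}^N$ at a cost of $2\|f_0\|_\infty/|\maC(\cki)|$) and the remaining terms use the Lipschitz continuity of $f_0$ in $y$. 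By Lemma \ref{lemma:e3}, the second term is bounded by $C\epsilon_{3,N}\to 0$ uniformly in $u_{\cki}^N$ and $t$, and the same bookkeeping applies to $\delta_{l_0}^k$.

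The main obstacle I anticipate is precisely the presence of the self-index $j=\cki$ in the intra-cluster sum: this entry is driven by the arbitrary unilateral control $u_{\cki}^N$ and therefore lies outside the i.i.d. structure exploited in the concentration step. The fix is conceptually simple but must be handled carefully, namely isolating this single term so that its contribution scales as $1/|\maC(\cki)|$ (using boundedness of $f_0,l_0$ from (H2)) and applying the i.i.d./Lipschitz arguments to the remaining $|\maC(\cki)|-1$ summands, all of which evolve under the GMFG best response in Systems $B$ and $D$.
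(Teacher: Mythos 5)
Your proof is correct and is essentially the argument the paper intends: the paper's proof of this lemma is just the remark that it parallels Lemma \ref{lemma:edd}, and you have carried out exactly that parallel (intra-cluster analogues of Lemmas \ref{lemma:DD2}, \ref{lemma:PDel} and \ref{lemma:DDxuy}, then the transfer to the actual states via Lemma \ref{lemma:e3}). Your isolation of the self-index $j=\cki$ term, bounded by $2\|f_0\|_\infty/|\maC(\cki)|$, is a correct and needed detail, and your observation that the concentration step here requires only (H9) and the i.i.d.\ structure of System $D$ (no appeal to (H11) or the Wasserstein continuity lemma) is also accurate.
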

\begin{proof}
The proof is similar to that of Lemma \ref{lemma:edd} and the details are omitted.
\end{proof}

Denote
$$
\epsilon_{fl}^k=\sup_{t,u_{\cki}^N(\cdot)}E
(|\delta_{f_0}^k| +|\delta_{l_0}^k|+|\delta_f^k| +|\delta_l^k|).
$$

\begin{lemma}\label{lemma:jjstar}
For  any admissible control $u_{\cki}^N$ in System B and $J_{\cki}^*$
in \eqref{jistar},
\begin{align}
J_{\cki}^N(u_{\cki}^N) \ge \inf_{u_{\cki}} J_{\cki}^*(u_{\cki}) -C \epsilon_{fl}^k,\nonumber
\end{align}
where the constant $C$ does not depend on $u_{\cki}^N$.
\end{lemma}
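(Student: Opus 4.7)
The plan is to couple the System B dynamics of the unilateral agent $\mathcal{A}_\cki$ with the limiting MV dynamics \eqref{yiofD} driven by the same Brownian motion $w_\cki$ and the same control process $u_\cki^N$, and then transfer the estimate to the costs.

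First, I would introduce an auxiliary process $\tilde x_\cki^\infty$ on the same probability space defined by
\begin{align*}
d\tilde x_\cki^\infty(t)=\widetilde f[\tilde x_\cki^\infty(t),u_\cki^N(t),\mu_G(t);g_{I^*(\cki)}]\,dt+\sigma\, dw_\cki(t),\qquad \tilde x_\cki^\infty(0)=x_\cki^N(0).
\end{align*}
Subtracting this from the reformulation \eqref{xindel} of the System $B$ equation,
\begin{align*}
x_\cki^N(t)-\tilde x_\cki^\infty(t)=\int_0^t\{\widetilde f[x_\cki^N,u_\cki^N,\mu_G;g_{I^*(\cki)}]-\widetilde f[\tilde x_\cki^\infty,u_\cki^N,\mu_G;g_{I^*(\cki)}]\}\,ds+\int_0^t(\delta_{f_0}^k+\delta_f^k)\,ds.
\end{align*}
Invoking the Lipschitz continuity of $\widetilde f$ in $x$ uniformly in $(t,u,\alpha)$ provided by Lemma \ref{lemma:HL} and Gronwall's lemma, I obtain
\begin{align*}
\sup_{0\le t\le T}E|x_\cki^N(t)-\tilde x_\cki^\infty(t)|\le C\,\epsilon_{fl}^k,
\end{align*}
with a constant $C$ that depends only on $T,\sigma$ and the Lipschitz constants in (H2), and not on the particular choice of $u_\cki^N$.

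Next I would transfer this state-process bound to the cost functionals. Writing
\begin{align*}
J_\cki^N(u_\cki^N)-J_\cki^*(u_\cki^N)&=E\int_0^T\{\widetilde l[x_\cki^N,u_\cki^N,\mu_G;g_{I^*(\cki)}]-\widetilde l[\tilde x_\cki^\infty,u_\cki^N,\mu_G;g_{I^*(\cki)}]\}\,dt\\
&\quad +E\int_0^T(\delta_{l_0}^k+\delta_l^k)\,dt,
\end{align*}
the first term is bounded by $C\sup_tE|x_\cki^N(t)-\tilde x_\cki^\infty(t)|\le CC'\epsilon_{fl}^k$ by the $x$-Lipschitz estimate of $\widetilde l$ from Lemma \ref{lemma:HL}, while the second term is bounded by $2T\epsilon_{fl}^k$ directly from the definition of $\epsilon_{fl}^k$. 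Hence $|J_\cki^N(u_\cki^N)-J_\cki^*(u_\cki^N)|\le C_0\epsilon_{fl}^k$ with $C_0$ independent of $u_\cki^N$. Since $u_\cki^N$ is an adapted process on the underlying filtered probability space, it is admissible for the limiting control problem \eqref{yiofD}--\eqref{jistar}, so $J_\cki^*(u_\cki^N)\ge\inf_{u_\cki}J_\cki^*(u_\cki)$, and the stated estimate follows by combining the two inequalities.

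The main obstacle I anticipate is the uniformity of the constant $C_0$ over the class of admissible controls $u_\cki^N$: both the coupling estimate and the cost comparison must use Lipschitz bounds that do not degrade as $u_\cki^N$ varies. This is precisely the content of Lemma \ref{lemma:HL}, which provides Lipschitz continuity of $\widetilde f_\alpha^*$ and $\widetilde l_\alpha^*$ in $x$ with constants independent of $(t,u,\alpha)$, so the argument goes through once we have absorbed the $u_\cki^N$-dependence into $\epsilon_{fl}^k$ via its supremum definition. A secondary point is to recognise that $u_\cki^N$ can legitimately be used as a control for \eqref{yiofD} irrespective of how rich its dependence on the $N$-agent state is, since the infimum in $\inf_{u_\cki}J_\cki^*$ is attained over the (larger) class of adapted processes by standard dynamic programming arguments.
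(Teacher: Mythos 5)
Your proposal is correct and follows essentially the same route as the paper: the paper also takes the realized non-anticipative process $\check u_{\cki}=u_{\cki}^N(t,\omega)$ as a control in \eqref{yiofD}, couples the resulting state $\check x_{\cki}^\infty$ (your $\tilde x_{\cki}^\infty$) to $x_{\cki}^N$ via \eqref{xindel} and Gronwall's lemma, bounds $|J_{\cki}^N(u_{\cki}^N)-J_{\cki}^*(\check u_{\cki})|\le C\epsilon_{fl}^k$, and concludes from $J_{\cki}^*(\check u_{\cki})\ge\inf_{u_{\cki}}J_{\cki}^*(u_{\cki})$. Your added remarks on the uniformity of the Lipschitz constants and the admissibility of the realized control are exactly the points the paper relies on implicitly.
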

\begin{proof}
Take any full state based Lipschitz feedback control $u_{\cki}^N$. It together with  the other agents's control laws generates the closed-loop state processes
$x_1^N(t), \ldots, x_N^N(t)$. Let $u_{\cki}^N(t,\omega)$ denote the realization as a non-anticipative process.  Now we take $\check u_{\cki}= u_{\cki}^N(t, \omega)$ in \eqref{yiofD} and let $\check x_{\cki}^\infty$  be the resulting state process.
It is clear from  \eqref{jistar} that
\begin{align}
J_{\cki}^*(\check u_{\cki}) \ge \inf_{u_{\cki}} J_{\cki}^*(u_{\cki}).\label{jjchecku}
\end{align}
Recalling \eqref{xindel} and  applying Gronwall's lemma  to estimate the difference $\check x_{\cki}^\infty- x_{\cki}^N$, we can show there exists $C$ independent of $u_{\cki}^N$ such that
$|J_{\cki}^N(u_{\cki}^N) -J_{\cki}^*(\check u_{\cki})|\le C \epsilon_{fl}^k,$
which combined with \eqref{jjchecku} completes the proof.
\end{proof}

\begin{lemma}\label{lemma:jjstarc} Let $\varphi_{I^*({\cki})} = \varphi(t,x, g_{I^*({\cki})})$ be the GMFG based control law \eqref{hatuIi}.
We have
\begin{align}
J^N_{\cki}(\varphi_{I^*({\cki})}) \le \inf_{u_{\cki}} J_{\cki}^*(u_{\cki}) + C\epsilon_{fl}^k.\nonumber
\end{align}
\end{lemma}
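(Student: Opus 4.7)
The plan is to mirror Lemma \ref{lemma:jjstar}, but in the reverse direction, by exploiting the fact that $\varphi_{I^*({\cki})}$ is by construction the best response for the limiting control problem at vertex $I^*({\cki})$ with the mean field ensemble $\mu_G(\cdot)$ coming from the GMFG system.

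The first step is to observe that the GMFG HJB equation \eqref{MFGPDES} at $\alpha = I^*({\cki})$ with $\mu_G(\cdot)$ fixed is exactly the HJB equation for the limit control problem \eqref{yiofD}--\eqref{jistar}. Hence by the standard verification argument (already invoked in the proof of Lemma \ref{lemma:HJBc12}), the feedback $\varphi_{I^*({\cki})}$ is optimal for the limit problem and
\begin{equation*}
J_{\cki}^*(\varphi_{I^*({\cki})}) = \inf_{u_{\cki}} J_{\cki}^*(u_{\cki}).
\end{equation*}

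The second step is to compare $J^N_{\cki}(\varphi_{I^*({\cki})})$ with $J_{\cki}^*(\varphi_{I^*({\cki})})$. Using \eqref{xindel} with $u_{\cki}^N = \varphi_{I^*({\cki})}$, the process $x_{\cki}^N$ satisfies the same SDE as $x_{\cki}^\infty$ in \eqref{yiofD} driven by $w_\cki$, but with an extra drift perturbation $\delta_{f_0}^k(t)+\delta_f^k(t)$. By the Lipschitz continuity of $\widetilde f$ in $x$ (Lemma \ref{lemma:HL}), the uniform Lipschitz continuity of $\varphi_{I^*({\cki})}$ in $x$ (Lemma \ref{lemma:unilip}(iii)), and Gronwall's lemma, one obtains
\begin{equation*}
E\sup_{0\le t\le T}|x_{\cki}^N(t)-x_{\cki}^\infty(t)| \le C\int_0^T E(|\delta_{f_0}^k(t)|+|\delta_f^k(t)|)\,dt \le C'\epsilon_{fl}^k.
\end{equation*}

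The third step is the cost comparison. Writing the cost under $\varphi_{I^*({\cki})}$ as
\begin{equation*}
J^N_{\cki}(\varphi_{I^*({\cki})})= E\int_0^T\bigl(\widetilde l[x_{\cki}^N,\varphi_{I^*({\cki})},\mu_G;g_{I^*({\cki})}]+\delta_{l_0}^k+\delta_l^k\bigr)dt,
\end{equation*}
and using Lipschitz continuity of $\widetilde l$ in $x$ together with the state estimate above and the definition of $\epsilon_{fl}^k$, we get
\begin{equation*}
|J^N_{\cki}(\varphi_{I^*({\cki})}) - J_{\cki}^*(\varphi_{I^*({\cki})})|\le C\epsilon_{fl}^k.
\end{equation*}
Combining with the optimality identity of the first step yields the claim. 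The main (only) subtle point is checking that the Lipschitz constants for $\widetilde f$, $\widetilde l$ and for $\varphi_{I^*({\cki})}$ in $x$ are independent of $k$, which follows from the uniform bounds already established (in particular \eqref{vxxc6} and Lemma \ref{lemma:unilip}(iii)), so that $C$ can indeed be chosen independently of $u_{\cki}^N$ and of the sequence $\{G_k\}$.
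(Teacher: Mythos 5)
Your proposal is correct and follows essentially the same route as the paper: apply $\varphi_{I^*(\cki)}$ to both the finite-population dynamics \eqref{xindel} and the limit dynamics \eqref{yiofD}, use Gronwall to bound $E|x_{\cki}^N - x_{\cki}^\infty|$ by a multiple of $\epsilon_{fl}^k$, deduce $|J_{\cki}^N(\varphi_{I^*(\cki)}) - J_{\cki}^*(\varphi_{I^*(\cki)})|\le C\epsilon_{fl}^k$, and conclude via the optimality $J_{\cki}^*(\varphi_{I^*(\cki)})=\inf_{u_{\cki}}J_{\cki}^*(u_{\cki})$. Your added justifications (verification argument for the optimality identity, uniformity of the Lipschitz constants in $k$) merely spell out details the paper leaves implicit.
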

\begin{proof}
Let $\varphi_{I^*({\cki})}$ be applied to the two systems
\eqref{yiofD} and  \eqref{xindel}. We further use Gronwall's lemma to estimate  $E|x_{\cki}^\infty- x_{\cki}^N|$.
We obtain $ |J_{\cki}^N( \varphi_{I^*({\cki})} )-J_{\cki}^*(\varphi_{I^*({\cki})} )|\le  C\epsilon_{fl}^k$.
Note that $J_{\cki}^*( \varphi_{I^*({\cki})} )=\inf_{u_{\cki}} J_{\cki}^*(u_{\cki})$. This completes the proof.
\end{proof}

{\it Proof of Theorem \ref{theorem:eNash}.} It follows from Lemmas
 \ref{lemma:edd}, \ref{lemma:edd0}, \ref{lemma:jjstar} and \ref{lemma:jjstarc}. \qed

\section{The LQ Case}
\label{sec:lq}

This section considers a special class
of linear-quadratic-Gaussian (LQG) GMFG models. Consider the graph $G_k$ with vertices ${\cal V}_k=\{1, \ldots, M_k\}$  and graph adjacency matrix $g^k= [g^k_{jl}]$.
For  agent  ${\cal A}_i$ in subpopulation cluster ${\mathcal C}_q$ situated at node $q$, let the intra- and inter-cluster coupling terms  be denoted by $z_{0,i}$ and $z_i$, respectively, where
 $$z_{0,i}=\frac{1}{|\maC_q|}\sum_{j\in \maC_q}x_j,\quad
 z_i=\frac{1}{|M_k|}\sum_{l \in {\cal V}_k} g^k_{ql} \frac{1}{|\maC_l|}\sum_{j\in \maC_l}x_j, \quad x_j,\ z_{0,i},\ z_i \in {\mathbb R}^{n}.$$
The dynamics of  ${\cal A}_i$ are given by the linear system
\begin{align*}
dx_i= (Ax_i + D_0 z_{0,i}+  D z_i  +   B u_i)dt +\Sigma dw_i, \quad 1 \leq i \leq N, \label{eqn1}
\end{align*}
where
 $u_i \in {\mathbb R}^{n_u}$ is the control input,
$w_i \in {\mathbb R}^{n_w}$ is a standard Brownian motion, and $A$, $B$, $D_0$, $D$, $\Sigma$ are conformally dimensioned matrices. Assume $Ex_i(0)=x_0$ for all $i$.

The individual agent's cost function takes the form
\begin{align*}
J_i(u_i;\nu_i)= &{E}\int_0^T\big[(x_i-\nu_i)^T Q (x_i-\nu_i) +u_i^T R u_i\big]dt \\
& + E\big[(x_i(T)-\nu_i(T))^T Q_{T}(x_i(T)-\nu_i(T))\big],\quad 1 \leq i \leq N,
\end{align*}
where $Q$, $Q_T\geq 0$, $R>0$, and $\nu_i= \gamma_0 z_{0,i}+ \gamma z_i+\eta$ is the process tracked by  ${\cal A}_i$. Here $\eta\in \mathbb{R}^n$ and $\gamma_0, \gamma\in \mathbb{R}$.

In the infinite population and graphon limit case, denote the local mean  $ \int_{{\mathbb R}^n} x\mu_\alpha (dx)$ at  $t$ for an  $\alpha$-agent situated at vertex  $\alpha$  by $\bar x_\alpha$, and the graphon weighted mean
 $ \int_{0}^1  g(\alpha, \beta) \bar x_\beta d\beta $  by $z_{\alpha}$.
The  $\alpha$-agent's state equation is  given by
\begin{align*}
dx_\alpha= (Ax_\alpha +D_0 \bar x_\alpha +D z_\alpha  + B u_\alpha)dt +\Sigma dw_\alpha, \quad \alpha \in [0,1].
\end{align*}
The $\alpha$-agent's cost function is
\begin{align*}
J_\alpha(u_\alpha;\nu_\alpha)=& {E}\int_0^T\big[(x_\alpha-\nu_\alpha)^T Q (x_\alpha-\nu_\alpha) +u_\alpha^T R u_\alpha\big]dt \\
 &+E\big[ (x_\alpha(T)-\nu_\alpha(T))^T Q_{T}(x_\alpha(T)-\nu_\alpha(T))\big],
\end{align*}
where  $\nu_\alpha =\gamma_0 \bar x_\alpha+ \gamma z_\alpha+\eta$.

Consider the Riccati equation
\begin{align}
&0=\dot{\Pi}_t + A^T \Pi_t + \Pi_t A    -  \Pi_t B R^{-1}B^T \Pi_t +
  Q , \nonumber
  \end{align}
where $\Pi_T =   Q_T$, and
  \begin{align}
& 0=\dot{s}_\alpha(t) + (A-B R^{-1}B^T\Pi_t )^T {s}_\alpha(t) + {\Pi}_t (D_0 \bar x_{\alpha}(t)+ Dz_{\alpha}(t))  - Q\nu_\alpha(t), \nonumber
\end{align}
where   $s_\alpha(T) = -Q_T \nu_\alpha(T)$.
 The   best response for the $\alpha$-agent
  is given by
\begin{equation*}
                 u_\alpha(t) = - R^{-1}B^T[\Pi_t x_\alpha(t)+
                  s_\alpha(t)]  .
\end{equation*}
Now the mean state process of ${x}_{\alpha}$ is
   \begin{equation}
\dot{\bar x}_\alpha  = (A-BR^{-1}B^T\Pi_t+D_0)  \bar x_\alpha + D z_{\alpha} -B R^{-1}B{^T} s_{\alpha}, \quad \alpha \in [0,1].\nonumber
\end{equation}

The existence analysis reduces to verifying the existence and uniqueness of solutions for    the equation system
\begin{align}
&\dot{\bar x}_\alpha = (A-BR^{-1} B^T \Pi_t+D_0) \bar x_\alpha -BR^{-1} B^Ts_\alpha +D\int_0^1 g(\alpha, \beta) \bar x_\beta d\beta, \label{lqmfx}  \\
& \dot{s}_\alpha = - (A-BR^{-1} B^T \Pi_t)^T s_\alpha+ (\gamma_0 Q-\Pi_t D_0)\bar x_\alpha \label{lqmfs} \\
&\qquad +(\gamma Q- \Pi_t D) \int_0^1  g(\alpha, \beta) \bar x_\beta d\beta + Q\eta, \nonumber
\end{align}
where $\bar x_\alpha(0)=x_0$ and $s_\alpha(T)=- Q_T [\gamma_0 \bar x_\alpha(T) +\gamma \int_0^1 g(\alpha, \beta) \bar x_\beta (T) d\beta +\eta ] $.

To analyze  \eqref{lqmfx}--\eqref{lqmfs},
let $\Phi(t,s)$ and $\Psi(t,s)$  be the fundamental solution matrix of
\begin{align*}
\dot x = (A-BR^{-1} B^T \Pi_t+D_0) x, \qquad
\dot y = - (A-BR^{-1} B^T \Pi_t)^T y
\end{align*}
for $x(t), y(t)\in \mathbb{R}^n$. For the special case with $D_0=0$,
$ \Psi(t,s)= \Phi^T(s,t)$ holds.
We convert  the existence analysis into a fixed point problem.
We view  $\bar x_\beta(t)= \bar x(\beta,t)$ as a  function of  $(\beta,t)
$. Below we derive an equation for $\bar x_\alpha(t)$ by eliminating $s_\alpha(t)$.
\begin{comment}
First we solve $s_\alpha$ in terms of  $\bar x_\alpha$.
We get
\begin{align}
s_\alpha (t)= &-\int_0^T  \Psi(t, \tau ) \Big[(\gamma Q -\Pi_\tau D)
\int_0^1 g(\alpha,\beta) \bar x_\beta(\tau)d\beta   +\gamma Q\eta \Big]  d\tau  \label{lqsbyx} \\
&- \gamma \Psi (t,T) Q_T ( \int_0^1 g(\alpha, \beta)\bar x_\beta(T) d\beta   +\eta  ). \nonumber
\end{align}
%by taking
%\begin{align}
%s_\alpha (0)=& -\int_0^T  \Psi(0, \tau ) \Big[(\gamma Q -\Pi_\tau D)\int %g(\alpha,\beta) \bar x_\beta(\tau)d\beta   +\gamma Q\eta \Big]  d\tau \nonumber\\
%&- \gamma \Psi (0,T) Q_T ( \int g(\alpha, \beta)\bar x_\beta(T) d\beta  % +\eta  ). \nonumber
%\end{align}
Substituting \eqref{lqsbyx} into \eqref{lqmfx}, we further calculate
\begin{align*}
\bar x_\alpha(t) =&\Phi (t, 0)x_0+ \int_0^t \Phi(t, r) BR^{-1} B^T\\
&\Big\{ \int_r^T \Psi(r, \tau) [(\gamma Q-\Pi_\tau D)\int_0^1 g(\alpha,\beta) \bar x_\beta(\tau)d\beta   +\gamma Q\eta ]d\tau\\
& +\gamma \Psi(r, T) Q_T( \int_0^1 g(\alpha, \beta)\bar x_\beta(T) d\beta   +\eta  )\Big\}dr\\
& + \int_0^t \Phi(t, r)  D \int_0^1 g(\alpha,\beta) \bar x_\beta(r)d\beta dr. \end{align*}
\end{comment}
Denote the function space
$D_\Lambda $ consisting of  continuous $\mathbb{R}^n$-valued functions on $[0,1]\times[0,T]$ with  norm $\|\check x\|=\sup_{\alpha, t}|\check x(\alpha,t)|$.
We use $|\cdot|$ to denote the Frobenius
norm of a vector or matrix. Define the operator $\Lambda$ as follows: for $\check x\in D_\Lambda$,
\begin{align*}
({\Lambda } \check x)( \alpha,t) =\
&\int_0^t \Phi(t, r) BR^{-1} B^T
\Big\{ \int_r^T \Psi(r, \tau) \Big[ (\gamma_0 Q-\Pi_\tau D_0)\check x(\alpha, \tau)\\
&\qquad\qquad+ (\gamma Q-\Pi_\tau D)\int_0^1 g(\alpha,\beta) \check x(\beta,\tau)d\beta  \Big]  d\tau\\
& + \Psi(r, T) Q_T\Big[ \gamma_0\check x(\alpha,T)+ \gamma \int_0^1 g(\alpha, \beta)\check x(\beta,T) d\beta \Big]  \Big\}dr\\
& + \int_0^t \Phi(t, r)  D \int_0^1 g(\alpha,\beta) \check x(\beta,r)d\beta dr.
\end{align*}
If (H5) holds, $\Lambda$ is from $D_\Lambda$ to itself.

The solution of the LQG GMFG reduces to finding a fixed point $\check x $ to the equation
\begin{align*}
\check x(\alpha, t) =& (\Lambda \check x)(\alpha,t)+ \Phi (t, 0)x_0\\
&+
 \int_0^t \Phi(t, r) BR^{-1} B^T
\Big[\! \int_r^T \Psi(r, \tau)    Q d\tau
 + \Psi(r, T) Q_T  \Big]\eta dr.
\end{align*}
Denote
  %\begin{align*}
$c_g= \max_\alpha\int_0^1 g(\alpha , \beta) d\beta$.
  %\end{align*}
We have the bound for the operator norm:
\begin{align}
\|\Lambda \|\le c_\Lambda\coloneqq &  \sup_{t\in [0,T]}\Big\{\int_0^t \int_r^T | \Phi(t, r) BR^{-1}B^T \Psi(r, \tau)|\cdot (|\gamma_0 Q-\Pi_\tau D_0|\nonumber\\
&\qquad \qquad +  c_g|\gamma Q -\Pi_\tau  D |) d\tau dr\nonumber \\
&+   \int_0^t     \Big[ | \Phi(t, r) BR^{-1}B^T \Psi(r, T) Q_T|\cdot(|\gamma_0|+ c_g|\gamma| ) +c_g| \Phi(t, r) D|   \Big] dr\Big\}.\nonumber %\\
\end{align}
If $  c_\Lambda <1,$
$\Lambda$ is a contraction and \eqref{lqmfx}--\eqref{lqmfs} has  a unique solution.

As an example for illustration, we assume the graphon weighted mean at vertex $\alpha$  arises from an underlying {\it uniform attachment graphon}, and consequently
$$
z_{\alpha} =\int_{0}^1  (1-\max(\alpha, \beta)) \int_{\mathbb{R}^n}  x \mu_\beta(d x)  d\beta,\quad \alpha, \beta\in [0,1],
$$
where it is readily verified that the uniform attachment graphon
satisfies  (H5).

%\section{Conclusion}
%\label{sec:con}

%We propose a   modeling of mean field games
%for agents distributed over large dense networks.
%The standard MFG equation system is generalized to incorporate the graphon %function.
%For future work, it is of great interest to  develop a parallel
%theory for sparse graphons (or graphings).

\section*{Appendix}
\renewcommand{\theequation}{A.\arabic{equation}}
\setcounter{equation}{0}
\renewcommand{\thetheorem}{A.\arabic{theorem}}
\setcounter{theorem}{0}

\begin{lemma}\label{lemma:wasscnt}
Assume \emph{(H1)--(H8)}. Let $\varphi_\alpha$ be the GMFG based best response \eqref{hatuIi} and $\mu_\alpha(t)$ the distribution of the closed-loop process $x_\alpha(t)$, $\alpha\in [0,1]$, in \eqref{clMV}
with initial distribution $\mu_0^x$. Then
we have
$$
\lim_{r\to 0}\sup_{|t-t^*|+|\beta-\beta^*|<r}W_1(\mu_\beta(t), \mu_{\beta^*}(t^*))=0,
$$
where $ t, t^*\in [0,T]$ and $\beta, \beta^*\in [0,1]$.
\end{lemma}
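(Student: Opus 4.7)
The plan is to use the triangle inequality
\[
W_1(\mu_\beta(t), \mu_{\beta^*}(t^*)) \le W_1(\mu_\beta(t), \mu_{\beta^*}(t)) + W_1(\mu_{\beta^*}(t), \mu_{\beta^*}(t^*))
\]
to split the joint modulus into a spatial piece (to be controlled uniformly in $t$) and a temporal piece (to be controlled uniformly in $\beta^*$). Each piece will be estimated by synchronous coupling of the closed-loop MV-SDE \eqref{clMV}.

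For the spatial piece, I would replay the coupling argument used inside the proof of Lemma \ref{lemma:unilip}(i), specialized to the GMFG fixed point ensemble $\mu_G(\cdot)$ and the associated best responses $\varphi_\alpha = \phi_\alpha(\cdot|\mu_G(\cdot))$. Driving $x_\beta$ and $x_{\beta^*}$ by a common Brownian motion with the common initial law $\mu_0^x$, the Lipschitz-in-$x$ property of $\varphi_\alpha$ uniformly in $\alpha$ (Lemma \ref{lemma:unilip}(iii)), the pointwise continuity $\varphi_\beta \to \varphi_{\beta^*}$ (Lemma \ref{lemma:contiBR}), together with (H2), (H5) and condition (C1) on $\mu_G \in \mathcal{M}_{[0,T]}$, combine through Gronwall's lemma (exactly as in the derivation leading to (3.23)) to yield
\[
\lim_{\beta \to \beta^*} E\sup_{0 \le t \le T} |x_\beta(t) - x_{\beta^*}(t)| = 0.
\]
Since $W_1(\mu_\beta(t), \mu_{\beta^*}(t)) \le E|x_\beta(t) - x_{\beta^*}(t)|$, this gives $\lim_{\beta \to \beta^*} \sup_{t \in [0,T]} W_1(\mu_\beta(t), \mu_{\beta^*}(t)) = 0$.

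For the temporal piece, I would use \eqref{clMV} directly: by (H1)--(H2) the drift $\widetilde f$ is bounded by some constant $M$ uniformly in $(x,u,\alpha,\mu_G(t))$, so integrating and taking expectations gives
\[
E|x_{\beta^*}(t) - x_{\beta^*}(t^*)| \le M|t-t^*| + \sigma\, E|w_{\beta^*}(t) - w_{\beta^*}(t^*)| \le C_0 |t-t^*|^{1/2},
\]
with $C_0$ independent of $\beta^*$. Hence $W_1(\mu_{\beta^*}(t), \mu_{\beta^*}(t^*)) \le C_0 |t-t^*|^{1/2}$ uniformly in $\beta^* \in [0,1]$, and combining with the spatial bound through the triangle inequality produces the claim.

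The main obstacle I anticipate is ensuring that the spatial convergence is genuinely uniform in $t \in [0,T]$ and not merely pointwise; this is what upgrades condition (C2) (which is only a dual bounded-Lipschitz modulus) to a full $W_1$ modulus. The key observation is that every constant entering the Gronwall step -- the Lipschitz constants of $f_0$, $f$, and $\varphi_\alpha$, as well as the bound supplied by (H5) -- is independent of $t$, so the pathwise supremum $\sup_{0 \le t \le T}$ on the left persists in the limit. Once this uniformity is secured, the remainder is routine.
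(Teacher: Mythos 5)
Your proposal follows essentially the same route as the paper's own proof: synchronous coupling of the closed-loop equation \eqref{clMV} at two vertices with a common Brownian motion and common initial condition, the uniform Lipschitz property of the best response (Lemma \ref{lemma:unilip}(iii)), its pointwise continuity in $\alpha$ (Lemma \ref{lemma:contiBR}), (H2), (H5) and Gronwall for the spatial term, plus a bounded-drift/Brownian-increment estimate for the temporal term, combined via the triangle inequality. The only point you gloss over is that your spatial modulus depends on $\beta^*$ (the limits obtained by dominated convergence are for a fixed $\beta^*$), so to reach the stated uniform modulus you still need the closing observation that $(\beta,t)\mapsto\mu_\beta(t)$ is continuous on the compact set $[0,1]\times[0,T]$ and therefore uniformly continuous, which is exactly how the paper finishes.
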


\begin{proof}
Step 1.
Take any $\beta , \beta^*\in [0,1]$. For  $\mu_G(\cdot)$ determined from the GMFG equations \eqref{MFGPDES} and \eqref{clMV},
define two processes
\begin{align*}
&dy_{\beta^*}=\widetilde f[y_{\beta^*}, \varphi(t, y_{\beta^*}, g_{\beta^*}), \mu_G; g_{\beta^*}]dt + \sigma dw_{\beta^*},\\
& dy_{\beta}=\widetilde f[y_{\beta}, \varphi(t, y_\beta, g_{\beta}), \mu_G; g_{\beta}]dt + \sigma dw_{\beta^*} ,
\end{align*}
where $y_{\beta^*}(0)= y_\beta(0)= x_i^N(0)$
and the same Brownian motion is used.
Then the distributions of $ y_{\beta^*}(t)$ and $y_{\beta}(t)$
are $\mu_{\beta^*} (t)$ and $\mu_\beta(t)$, respectively.
We obtain
\begin{align*}
&y_\beta(t)-y_{\beta^*}(t)\\
=&\int_0^t \Delta_{\beta,\beta^*}^0(s) ds+ \int_0^t\int_0^1\int_{\mathbb R} \Delta_{\beta, \beta^*}(s,z,\lambda) \mu_\lambda (s,dz) d\lambda ds,
\end{align*}
where
\begin{align*}
& \Delta_{\beta,\beta^*}^0(s)=  \int_{\mathbb R}f_0(y_{\beta}, \varphi(s, y_\beta, g_{\beta}),z)\mu_\beta(s,dz)-
\int_{\mathbb R} f_0(y_{\beta^*}, \varphi(s, y_{\beta^*}, g_{\beta^*}),z)\mu_{\beta^*}(s, dz),  \\
&\Delta_{\beta, \beta^*}(s,z,\lambda) =
f(y_{\beta}, \varphi(s, y_\beta, g_{\beta}),z )g(\beta, \lambda)\\
&\qquad\qquad\qquad - f(y_{\beta^*}, \varphi(s, y_{\beta^*}, g_{\beta^*}),z ) g(\beta^*, \lambda).
\end{align*}
We will simply write $\mu_\lambda(s, dz)$ as  $\mu_\lambda (dz)$ if the time argument is clear, where $\lambda$ is the vertex index.
Denote $\kappa_{\beta,\beta^*}(s)= |\varphi(s, y_{\beta^*}, g_{\beta}) - \varphi(s, y_{\beta^*}, g_{\beta^*}) |$, where the time argument $s$ in $y_\beta$ and  $y_{\beta^*}$  has been suppressed.
It follows that
\begin{align}
&|\Delta_{\beta,\beta^*}^0(s)|\le \nonumber \\
&\Big| \int_{\mathbb R}f_0(y_{\beta}, \varphi(s, y_\beta, g_{\beta}),z)\mu_\beta(s,dz)-
\int_{\mathbb R} f_0(y_{\beta}, \varphi(s, y_{\beta}, g_{\beta}),z)\mu_{\beta^*}(s, dz)\Big| \nonumber \\
& +\Big|\int_{\mathbb R}f_0(y_{\beta}, \varphi(s, y_\beta, g_{\beta}),z)\mu_{\beta^*}(s,dz)-
\int_{\mathbb R} f_0(y_{\beta^*}, \varphi(s, y_{\beta^*}, g_{\beta^*}),z)\mu_{\beta^*}(s, dz)\Big| \nonumber \\
&\le  C E|y_\beta-y_{\beta^*}| + C |y_\beta-y_{\beta^*}| + C| \varphi(s, y_\beta, g_{\beta}) - \varphi(s, y_{\beta^*}, g_{\beta^*})|  \nonumber \\
&\le  C E|y_\beta-y_{\beta^*}| + C_1 |y_\beta-y_{\beta^*}| +C \kappa_{\beta,\beta^*}(s),  \nonumber
\end{align}
where the second inequality is obtained using (H2), (H3), and the method in \eqref{mumyy}. The last inequality has used the uniform Lipschitz continuity of $\varphi_\beta$ in the space variable (see Lemma \ref{lemma:unilip}).
It follows that
\begin{align}
&E |\Delta_{\beta,\beta^*}^0(s)|
\le C_2 E|y_\beta(s)-y_{\beta^*}(s)|+ C
 E\kappa_{\beta,\beta^*}(s).\label{ef0del}
\end{align}

Next, we have
\begin{align}
&\Big|\int_0^1\int_{\mathbb R}\Delta_{\beta,\beta^*}(s,z,\lambda)\mu_\lambda (dz) d\lambda \Big|\nonumber\\
\le & \Big|\int_0^1\int_{\mathbb R}[f(y_{\beta}, \varphi(s, y_\beta, g_{\beta}),z )-f(y_{\beta^*}, \varphi(s, y_{\beta^*}, g_{\beta^*}),z )]g(\beta, \lambda) \mu_\lambda (dz) d\lambda \Big| \label{flip}  \\
&+ \Big|\int_0^1\int_{\mathbb R}f(y_{\beta^*},
\varphi(s, y_{\beta^*}, g_{\beta^*}),z ) [g(\beta, \lambda)- g(\beta^*, \lambda)]\mu_\lambda (dz) d\lambda\Big|\nonumber\\
=:&  I_f(s)+I_g(s).\nonumber
\end{align}
We have
\begin{align}
I_f(s)&\le \int_0^1 \int_{\mathbb R} C(|y_\beta -y_{\beta^*}| + \kappa_{\beta,\beta^*})g(\beta, \lambda) \mu_\lambda (dz) d\lambda \nonumber \\
&\le C(|y_\beta -y_{\beta^*}| + \kappa_{\beta,\beta^*})(s), \nonumber
\end{align}
where we have  used the Lipschitz property of $f$ and $\varphi_\beta$.
Therefore,
\begin{align}
EI_f(s)\le C(E|y_\beta(s) -y_{\beta^*}(s)| + E\kappa_{\beta,\beta^*}(s)).\label{eIfdel}
\end{align}

For any fixed value $y_{\beta^*}(s,\omega)$,
denote
\begin{align}
\xi_{\beta^*, s,\omega} (\lambda)= \int_{\mathbb R}f(y_{\beta^*}, \varphi(s, y_{\beta^*}, g_{\beta^*}),z )\mu_\lambda (dz).\nonumber
\end{align}
We have
\begin{align}
I_g(s) =\Big| \int_0^1 \xi_{\beta^*, s,\omega} (\lambda)g(\beta, \lambda) d\lambda - \int_0^1 \xi_{\beta^*, s,\omega} (\lambda)g(\beta^*, \lambda) d\lambda\Big|.\nonumber
\end{align}
Hence, by (H5),
$ I_g(s) \to 0$ $(\omega, s)$-a.e. as $\beta\to \beta^*$. It is clear $I_g(s)$ is bounded by a fixed constant since $f$ is a bounded function.
 For the fixed $\beta^*$, by Lemma \ref{lemma:contiBR}, the random variable $\kappa_{\beta,\beta^*}(s)$ is bounded and  converges to zero with probability one. Denote $\delta_g=\int_0^TEI_g(s)ds $ and
$ \delta_\kappa=\int_0^T E\kappa_{\beta,\beta^*}(s) ds$.
  By dominated convergence, we have
$$
\lim_{\beta\to \beta^*}( \delta_g +\delta_\kappa)= 0.
$$
By \eqref{ef0del}--\eqref{eIfdel},
it follows that
\begin{align}
E|y_\beta(t)-y_{\beta^*}(t)|\le C\int_0^t E|y_\beta(s)-y_{\beta^*}(s)|ds+ C( \delta_\kappa +  \delta_g).\nonumber
\end{align}
By Gronwall's lemma, we have
\begin{align}
\sup_{0\le t\le T}E|y_\beta(t)-y_{\beta^*}(t)|\le Ce^{CT}( \delta_\kappa+ \delta_g).\nonumber
\end{align}
Since
%\begin{align}
%W_1(\mu_\beta (t), \mu_{\beta^*}(t)) =\inf_{\widehat\gamma} \int|x-y|\widehat\gamma %(dx, dy),\notag
%\end{align}
%where $\widehat \gamma$ is a coupling of  $\mu_\beta(t)$ and $\mu_{\beta^*}(t)$,
%it follows that
  %\begin{align}
 $W_1(\mu_\beta (t), \mu_{\beta^*}(t))\le E|y_\beta(t)-y_{\beta^*}(t)|$,
  %.\nonumber
 %\end{align}
then
\begin{align}
\sup_t W_1(\mu_\beta (t), \mu_{\beta^*}(t))\le C_{1}( \delta_\kappa+ \delta_g),
\label{w1kg}
\end{align}
where $\delta_\kappa$ and $\delta_g$ depend on $\beta^*$.

Step 2.
Now we consider given $(\beta^*, t^*)\in [0,1]\times [0,T]$. By use of the SDE of $y_\beta$ and elementary  estimates, we obtain
\begin{align}
\lim_{|t-t^*|\to 0}\sup_\beta W_1(\mu_{\beta}(t^*), \mu_{\beta}(t)) =0.\label{w1tst}
\end{align}
We have
\begin{align}
W_1(\mu_\beta (t), \mu_{\beta^*}(t^*))\le W_1(\mu_\beta (t), \mu_{\beta}(t^*)) + W_1(\mu_\beta (t^*), \mu_{\beta^*}(t^*)).\nonumber
\end{align}
Given any $\epsilon>0$, by \eqref{w1kg} and \eqref{w1tst} there exists $\delta_{\epsilon,\beta^*}>0$ such that whenever $|t-t^*|+|\beta-\beta^*|\le \delta_{\epsilon,\beta^*}$, we have
$$
W_1(\mu_\beta (t), \mu_{\beta}(t^*))\le \frac{\epsilon}{2}, \qquad W_1(\mu_\beta (t^*), \mu_{\beta^*}(t^*))\le \frac{\epsilon}{2}.
$$
Therefore, $W_1(\mu_\beta (t), \mu_{\beta^*}(t^*))\le \epsilon$. We conclude that $\mu_\beta(t)$ as a mapping from the compact space $[0, 1]\times [0,T]$ to ${\mathcal P}_1({\mathbb R})$ with the  metric $W_1(\cdot, \cdot)$ is continuous and hence must be uniformly continuous.
The lemma follows.
\end{proof}

\begin{lemma}\label{lemma:ST1}
Suppose the graphon $g$ satisfies \emph{(H5)} and \emph{(H11)}. Then for any
given measurable sets $ \mathcal{S}, \mathcal{ T} \subset [0,1]$, under \emph{(H9)} we have
\begin{align}
\lim_{k\to \infty}\Big|\int_{{\mathcal S}\times {\mathcal T}} ( g^k-g )dxdy\Big|=0.
\end{align}
\end{lemma}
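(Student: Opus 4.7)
The strategy is to reduce the claim to the case where $\mathcal{T}$ is exactly a union of partition intervals $I_j^k$ (so that (H11) applies term-by-term), and then to absorb the remaining error using the continuity provided by (H5).

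\emph{Reduction to a partition-adapted set.} I would define $\mathcal{T}^k \coloneqq \bigcup\{I_j^k : M_k|\mathcal{T}\cap I_j^k| > 1/2\}$. Since the intervals $I_{j(\beta)}^k$ containing $\beta$ shrink to $\beta$ with bounded eccentricity (each is contained in, and has half the Lebesgue mass of, the centered ball $B(\beta, 1/M_k)$), the Lebesgue differentiation theorem gives $M_k|\mathcal{T}\cap I_{j(\beta)}^k|\to\mathbf{1}_\mathcal{T}(\beta)$ for almost every $\beta$, so $\mathbf{1}_{\mathcal{T}^k}\to\mathbf{1}_\mathcal{T}$ a.e.\ and dominated convergence yields $|\mathcal{T}\triangle\mathcal{T}^k|\to 0$. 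Because $g,g^k$ take values in $[0,1]$,
$$\left|\int_{\mathcal{S}\times\mathcal{T}}(g^k-g)\,dxdy - \int_{\mathcal{S}\times\mathcal{T}^k}(g^k-g)\,dxdy\right| \le 2|\mathcal{T}\triangle\mathcal{T}^k|\to 0,$$
so it suffices to treat the integral over $\mathcal{S}\times\mathcal{T}^k$.

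\emph{Sectional estimate via (H11), then midpoint-to-integral via (H5).} Write $\mathcal{T}^k=\bigcup_{j\in B_k}I_j^k$. Splitting the double integral over the partition,
$$\int_{\mathcal{S}\times\mathcal{T}^k}g^k\,dxdy = \sum_i |\mathcal{S}\cap I_i^k|\sum_{j\in B_k}\frac{g^k_{ij}}{M_k}.$$
Setting $F^k(\alpha) \coloneqq \int_{\mathcal{T}^k}g(\alpha,\beta)\,d\beta = \sum_{j\in B_k}\int_{I_j^k}g(\alpha,\beta)\,d\beta$, assumption (H11) gives
$$\max_i\left|\sum_{j\in B_k}\tfrac{g^k_{ij}}{M_k} - F^k(I_i^*)\right| \le \max_i\sum_j\left|\tfrac{g^k_{ij}}{M_k}-\int_{I_j^k}g(I_i^*,\beta)\,d\beta\right|\to 0,$$
so $\int_{\mathcal{S}\times\mathcal{T}^k}g^k\,dxdy = \sum_i|\mathcal{S}\cap I_i^k|F^k(I_i^*) + o(1)$. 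Next, with $F(\alpha) = \int_\mathcal{T} g(\alpha,\beta)\,d\beta$, (H5) applied to $h=\mathbf{1}_\mathcal{T}$ says $F$ is continuous and hence uniformly continuous on $[0,1]$ with some modulus $\omega_F$, while $\sup_\alpha|F^k(\alpha)-F(\alpha)|\le|\mathcal{T}\triangle\mathcal{T}^k|\to 0$. A triangle inequality then gives $|F^k(I_i^*)-F^k(\alpha)|\le 2|\mathcal{T}\triangle\mathcal{T}^k|+\omega_F(1/M_k)\to 0$ uniformly in $i$ and $\alpha\in I_i^k$, whence $\sum_i|\mathcal{S}\cap I_i^k|F^k(I_i^*) = \int_\mathcal{S} F^k(\alpha)\,d\alpha + o(1) = \int_{\mathcal{S}\times\mathcal{T}^k}g\,dxdy + o(1)$. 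Chaining with the reduction step closes the proof.

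The main obstacle is the boundary mismatch between $\mathcal{T}$ and the partition: (H11) only sees the section profile over complete cells $I_j^k$, whereas the arbitrary set $\mathcal{T}$ may cut the cells, so one cannot directly match $g^k_{ij}|\mathcal{T}\cap I_j^k|$ with $\int_{\mathcal{T}\cap I_j^k}g(I_i^*,\beta)\,d\beta$. The density-point reduction to $\mathcal{T}^k$ is precisely what forces $|\mathcal{T}^k\cap I_j^k|$ to be either $0$ or $1/M_k$ so that (H11) becomes usable, and (H5) is then what licenses replacing the midpoint sampling $F(I_i^*)$ by the $\alpha$-integral of $F$ over $I_i^k$.
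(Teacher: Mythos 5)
Your proof is correct, but it takes a genuinely different route from the paper's. The paper approximates $\mathcal S$ and $\mathcal T$ from outside by open sets (outer regularity), truncates each to a finite union of disjoint open intervals, and then inner-approximates those unions by complete partition cells, tracking a chain of $\epsilon$-errors (ending in a $11\epsilon$ bound); the hypothesis (H5) is invoked there to get uniform continuity of $x\mapsto\int_{U_{t^*}}g(x,y)\,dy$ for the $\epsilon$-dependent interval union $U_{t^*}$. You instead adapt only $\mathcal T$ to the partition, via the Lebesgue density theorem applied to the shrinking cells $I^k_{j(\beta)}$ (which shrink nicely since they fill half of $B(\beta,1/M_k)$), obtaining $|\mathcal T\triangle\mathcal T^k|\to 0$ by dominated convergence; you then exploit the fact that $g^k$ is a step function, so the integral over $\mathcal S$ in the first variable is computed exactly as $\sum_i|\mathcal S\cap I_i^k|\sum_{j\in B_k}g^k_{ij}/M_k$ and no approximation of $\mathcal S$ is needed at all, with (H11) applied after a triangle inequality over the restricted index set $B_k$ and (H5) applied directly to $h=\mathbf 1_{\mathcal T}$ (a fixed function, hence a fixed modulus of continuity $\omega_F$ on the compact set $[0,1]$). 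Both arguments use (H9), (H5) and (H11) at the same logical junctures; what your version buys is a shorter error chain (three terms, each manifestly $o(1)$), no bookkeeping of the auxiliary integers $s^*,t^*$, and no need to touch $\mathcal S$, at the modest cost of invoking the differentiation theorem where the paper uses only outer regularity and the interval structure of open subsets of $\mathbb R$. All the individual estimates you state check out: the restriction of the (H11) sum to $j\in B_k$ is legitimate, $\sup_\alpha|F^k(\alpha)-F(\alpha)|\le|\mathcal T\triangle\mathcal T^k|$ follows from $0\le g\le1$, and the midpoint replacement costs at most $2|\mathcal T\triangle\mathcal T^k|+\omega_F(1/M_k)$, uniformly in $i$.
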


\begin{proof}
 Step 1. We approximate ${\mathcal S},{\mathcal T}$ by open sets.
Let $\mu_{\rm L}$ denote the Lebesgue measure on $\mathbb{R}^d$, where the dimension $d$ will be clear from the context.
Consider the given sets
${\mathcal S},{\mathcal T}$, and choose an arbitrary $\epsilon>0$. Note that for any measurable set $A_1\subset\mathbb{R}^d$ and any $\delta_0>0$, there exists an open set $A_2\supset A_1 $ such that $\mu_{\rm L}(A_2\backslash A_1)\le \delta_0$ (see e.g. \cite{N83}). So there exist open sets $\mathcal{S}^o\subset \mathbb{R}$ and ${\mathcal T}^o\subset \mathbb{R}$ such that
$\mathcal{S} \subset \mathcal{S}^o$, ${\mathcal  T}\subset{\mathcal  T}^o$ and
$\mu_{\rm L}(\mathcal{S}^o\backslash\mathcal{S}) \le \epsilon$, $\mu_{\rm L}({\mathcal T}^o\backslash {\mathcal T}) \le \epsilon$.

Define the new open sets $\mathcal{S}_1^o=\mathcal{S}^o\cap (0,1)$ and ${\mathcal T}^o_1={\cal T}^o\cap (0,1)$.
Each open set in $\mathbb{R}$ may be written as the union of at most countable disjoint open intervals \cite{N83}; among such a union for $\mathcal{S}_1^o $, we may find a finite integer $s^*$  (depending on $({\mathcal S}, \epsilon) $) and constituent disjoint open intervals $I_i^\mathcal{S}\subset [0,1]$, $1\le i\le s^*$, such that $U_{s^*}\coloneqq \cup_{i=1}^{s^*} I_i^{\mathcal S} \subset {\mathcal S}_1^o$ and  $\mu_{\rm L} ( \mathcal{S}_1^o\backslash U_{s^*})\le \epsilon$.  Similarly, we find a finite integer $t^*$ and disjoint open intervals $I_i^{\cal T}\subset [0,1]$ such that   $U_{t^*}\coloneqq \cup_{j=1}^{t^*} I_j^{\mathcal T} \subset {\mathcal T}_1^o $ and  $\mu_{\rm L} ( {\mathcal T}_1^o\backslash U_{t^*})\le \epsilon$. Here the choice of $(s^*, t^*)$ depends on $(\mathcal{S},\mathcal{T}, \epsilon)$.

By the construction of $U_{s^*} $ and $U_{t^*}$,
 we have the bound for the measure of the following  symmetric differences:
\begin{align}
\mu_{\rm L}(\mathcal{S}\Delta U_{s^*}) \le 2{\epsilon}, \quad
\mu_{\rm L}(\mathcal{T}\Delta U_{t^*}) \le 2{\epsilon}, \nonumber
\end{align}
which implies $\mu_{\rm L} (( \mathcal{S}\times \mathcal{T})\Delta (U_{s^*} \times U_{t^*}) )\le 6\epsilon$.
Since  $|g^k-g|\le 1$ for any $x,y$, we have
\begin{align}
\Big|\int_{\mathcal{S}\times \mathcal{T}} ( g^k-g )dxdy - \eta_k\Big|\le {6\epsilon}, \label{ST2Ust}
\end{align}
where
\begin{align}
\eta_k&\coloneqq \Big|\int_{U_{s^*}\times U_{t^*}} ( g^k-g )dxdy\Big|.\nonumber
\end{align}

Step 2. Blow we estimate $\eta_k$. Under (H9)
we take a sufficiently large $K_0$, depending on ${s^*}$ (and so on $({\mathcal S}, \epsilon)$), such that for all $k\ge K_0$, $$
\frac{s^*}{M_k}\le {\epsilon}.
$$
Consider $k\ge K_0$.
 We select from the subintervals
$I^{k}_1, \ldots, I_{M_k}^{k}$ of equal length $1/M_k$ in the partition of $[0,1]$  such that  a subinterval is selected whenever its interior is  contained in $U_{s^*}$. The method here is to fill $U_{s^*}$ as much as possible from inside by these subintervals.  This procedure determines  a subcollection denoted by $I^{k}_{i_r}$, $r=1, \ldots, r_k$. Denote $\hat U_{s^*} = \cup_{r=1}^{r_k} I^{k}_{i_r}$.
Then the interior of $\hat U_{s^*} $ is contained in $U_{s^*}$. We need to estimate the measure for the part of $U_{s^*}$ not covered by $\hat U_{s^*} $.
We check $I_i^{\mathcal S}$, $1\le i\le s^*$, to obtain two cases:
(i) $I_i^{\mathcal S}\subset \hat U_{s^*} $, (ii) $I_i^{\mathcal S}$ has a portion (allowed to be equal to its whole) of positive measure staying outside  $\hat U_{s^*}$. For case (ii),  the portion of $I_i^{\mathcal S}$ that is not covered by $\hat U_{s^*}$ consists of either one interval, as part or the whole of $I_i^{\mathcal S}$, or  two intervals each having an endpoint of
$I_i^{\mathcal S}$ as its boundary; hence the measure of that portion is less than $2/M_k$.   It follows that
\begin{align}
\mu_{\rm L}(U_{s^*} \backslash \hat U_{s^*}  )\le \frac{2s^*}{M_k}
\le 2{\epsilon}. \label{muuue}
\end{align}

By \eqref{muuue}, for all  $k\ge K_0$, we have
\begin{align}
\Big|\int_{U_{s^*}\times U_{t^*}} ( g^k-g )dxdy-\int_{\hat U_{s^*}\times U_{t^*}} ( g^k-g )dxdy\Big|  \le {2\epsilon}.\label{s2shat}
\end{align}

Step 3. Now for $k\ge K_0$ we check
\begin{align}
\hat \eta_k\coloneqq \Big|\int_{\hat U_{s^*}\times U_{t^*}} ( g^k-g )dxdy\Big|.\nonumber
\end{align}
By (H5), for the selected $U_{t^*}$, $\int_{U_{t^*}} g(x,y) dy $ as a function of $x$ is uniformly continuous on $[0,1]$.
 So for $\epsilon$ chosen in Step 1,  there exists $\delta>0$ (depending on $g$, $\epsilon$ and $U_{t^*}$) such that
\begin{align}
\Big|\int_{U_{t^*}} g(x,y) dy -\int_{U_{t^*}} g(x',y)dy  \Big|\le {\epsilon}
\label{gxxp}
\end{align}
whenever $|x-x'|\le \delta$. For the above $\delta$, we fix  $K_1\ge K_0$ such that for all $k\ge K_1$, we have
  %\begin{align}
$1/{M_k}\le 2\delta$.
  %\end{align}
Note that  we use $(I_{i_r}^k)^*$ to denote the midpoint of the interval $I_{i_r}^k$.
Now for $k\ge K_1$, we have
\begin{align*}
\hat \eta_k %&=\Big|\int_{\hat U_{s^*}\times U_{t^*}} ( g^k-g )dxdy\Big|\\
&= \Big|\sum_{r=1}^{r_k} \int_{I^{k}_{i_r}} \int_{U_{t^*}} [g^k(x,y)-g(x,y)] dydx\Big|\\
&\le\Big|  \sum_{r=1}^{r_k} \int_{I^{k}_{i_r}} \int_{U_{t^*}} [g^k((I^{k}_{i_r})^*,y )- g((I^{k}_{i_r})^*,y )]dy dx\Big| +{\epsilon}\\
& =  \Big|\sum_{r=1}^{r_k} \frac{1}{M_k }\int_{U_{t^*}} [g^k((I^{k}_{i_r})^*,y )- g((I^{k}_{i_r})^*,y )]dy\Big| +{\epsilon}\\
&\le \frac{1}{M_k }\sum_{r=1}^{r_k}\zeta_k +{\epsilon},
\end{align*}
where
\begin{align}
\zeta_k\coloneqq \Big|\int_{U_{t^*}} [g^k((I^{k}_{i_r})^*,y )- g((I^{k}_{i_r})^*,y )]dy\Big|.\nonumber
\end{align}
The first inequality follows from \eqref{gxxp} and  $\mu_{\rm L} (\cup_{r=1}^{r_k} I^{k}_{i_r})\le 1$.

Step 4. Now we estimate
$\zeta_k$.
As in Step 2, we  take a sufficiently large $K_2\ge K_1$, depending on $ ({t^*, \epsilon})$, such that for all $k\ge K_2$,
${t^*}/{M_k}\le {\epsilon}.$
For $k\ge K_2$ and  the subintervals
$I^{k}_1, \ldots, I_{M_k}^{k}$, as in Step 2, we  select a subcollection denoted by $I^{k}_{j_\tau}$, $\tau=1, \ldots, \tau_k$, each of which is selected whenever its interior is  contained in $U_{t^*}$. Then it follows that
\begin{align}
\mu_{\rm L}(U_{t^*} \backslash  \cup_{\tau=1}^{\tau_k} I^{k}_{j_\tau} )\le \frac{2t^*}{M_k}\le 2\epsilon . \label{muLtau}
\end{align}
By \eqref{muLtau}, we have for all $k\ge K_2$,
\begin{align*}
 \zeta_k
 % = &\Big|\int_{U_{t^*}} [g^k((I^{k}_{i_r})^*,y )- g((I^{k}_{i_r})^*,y )]dy\Big|\\
\le &\Big|\int_{\cup_{\tau=1}^{\tau_k} I^{k}_{j_\tau} } [g^k((I^{k}_{i_r})^*,y )- g((I^{k}_{i_r})^*,y )]dy\Big|+2{\epsilon}\\
\le & \sum_{\tau=1}^{\tau_k}\Big|\frac{ g^k_{i_rj_\tau} }{M_k}- \int_{\beta\in I^{k}_{j_\tau} } g_{(I^{k}_{i_r})^* ,\beta}
d\beta\Big| +2{\epsilon}.
\end{align*}
We write $g(\alpha, \beta)$ as $g_{\alpha, \beta}$.

Step 5. Note that $r_k, \tau_k\le M_k$.   Subsequently, by Step 3 and Step 4, we have for $k\ge K_2$,
\begin{align}
\hat \eta_k &\le  \frac{1}{M_k }\sum_{r=1}^{r_k}\Big[ \sum_{\tau=1}^{\tau_k}\Big|\frac{ g^k_{i_rj_\tau} }{M_k}- \int_{\beta\in I^{k}_{j_\tau} } g_{(I^{k}_{i_r})^* ,\beta}
d\beta\Big| +2{\epsilon}  \Big]+ {\epsilon} \nonumber\\
 & \le \frac{1}{M_k }\sum_{r=1}^{r_k} \sum_{\tau=1}^{\tau_k}\Big|\frac{ g^k_{i_rj_\tau} }{M_k}- \int_{\beta\in I^{k}_{j_\tau} } g_{(I^{k}_{i_r})^* ,\beta}
d\beta\Big|  + {3\epsilon}\nonumber\\
&\le \max_i \sum_{j=1}^{M_k} \Big|\frac{ g^k_{{\mathcal C}_i{\mathcal C}_j} }{M_k}- \int_{\beta\in I_j^{k} } g_{(I^{k}_{i})^* ,\beta}
d\beta\Big|  + {3\epsilon}. \label{hatetamax}
\end{align}
 By \eqref{ST2Ust}, \eqref{s2shat} and \eqref{hatetamax}, we obtain for all $k\ge K_2$ depending on $({\mathcal S},{\mathcal T},\epsilon)$,
\begin{align*}
\Big|\int_{\mathcal{S}\times \mathcal{T}} ( g^k-g )dxdy\Big|
%\le \max_i \sum_{j=1}^{M_k} \Big|\frac{ g^k_{{\mathcal C}_i,
%{\mathcal C}_j} }{M_k}- \int_{\beta\in I_j^{M_k} } g_{(I^{k}_j)^* ,\beta}
%d\beta\Big|  + {6\epsilon}+{2\epsilon} +
% {3\epsilon}.
 &\le \max_i \sum_{j=1}^{M_k} \Big|\frac{ g^k_{{\mathcal C}_i{\mathcal C}_j} }{M_k}- \int_{\beta\in I_j^{k} } g_{(I^{k}_j)^* ,\beta}
d\beta\Big| +11\epsilon.
\end{align*}
The lemma follows.
\end{proof}

\bibliographystyle{amsplain}
\bibliography{caineshuangGmfgRefV2}

\providecommand{\bysame}{\leavevmode\hbox to3em{\hrulefill}\thinspace}
\providecommand{\MR}{\relax\ifhmode\unskip\space\fi MR }
% \MRhref is called by the amsart/book/proc definition of \MR.
\providecommand{\MRhref}[2]{%
  \href{http://www.ams.org/mathscinet-getitem?mr=#1}{#2}
}
\providecommand{\href}[2]{#2}
\begin{thebibliography}{10}

\bibitem{BCW20}
Erhan Bayraktar, Suman Chakraborty, and Ruoyu Wu, \emph{Graphon mean field
  systems}, arXiv:2003.13180 (2020).

\bibitem{borgs2006graph}
Christian Borgs, Jennifer Chayes, L{\'a}szl{\'o} Lov{\'a}sz, Vera~T S{\'o}s,
  Bal{\'a}zs Szegedy, and Katalin Vesztergombi, \emph{Graph limits and
  parameter testing}, Proc. the thirty-eighth annual ACM symposium on Theory of
  computing, 2006, pp.~261--270.

\bibitem{borgs2008convergent}
Christian Borgs, Jennifer~T Chayes, L{\'a}szl{\'o} Lov{\'a}sz, Vera~T S{\'o}s,
  and Katalin Vesztergombi, \emph{Convergent sequences of dense graphs {I}:
  Subgraph frequencies, metric properties and testing}, Advances in Mathematics
  \textbf{219} (2008), no.~6, 1801--1851.

\bibitem{borgs2012convergent}
\bysame, \emph{Convergent sequences of dense graphs {II}. multiway cuts and
  statistical physics}, Annals of Mathematics \textbf{176} (2012), no.~1,
  151--219.

\bibitem{caines2015mean}
Peter~E Caines, \emph{Mean field games}, Encyclopedia of Systems and Control
  (2015), 706--712.

\bibitem{CainesHuangCDC2018}
Peter~E Caines and Minyi Huang, \emph{Graphon mean field games and the {GMFG}
  equations}, Proc. 57th IEEE CDC (Miami Beach, FL, USA), 2018, pp.~4129--4134.

\bibitem{CH19}
\bysame, \emph{Graphon mean field games and the {GMFG} equations:
  $\epsilon$-{N}ash equilibria}, Proc. the 58th IEEE CDC (Nice, France), 2019,
  pp.~286--292.

\bibitem{CHM17}
Peter~E Caines, Minyi Huang, and Roland~P Malham{\'e}, \emph{Mean field games},
  Handbook of Dynamic Game Theory (Tamer Ba\c{s}ar and Georges Zaccour, eds.),
  Springer, Berlin, 2017, pp.~345--372.

\bibitem{CarmonaDelarueBook2018a}
Rene Carmona and Francois Delarue, \emph{Probabilistic theory of mean field
  games with applications {I}}, vol.~83, Springer International Publishing,
  2018.

\bibitem{CarmonaDelarueBook2018b}
\bysame, \emph{Probabilistic theory of mean field games with applications
  {II}}, vol.~84, Springer International Publishing, 2018.

\bibitem{delarue2017mean}
Fran{\c{c}}ois Delarue, \emph{Mean field games: A toy model on an
  {E}rd{\"o}s-{R}enyi graph}, ESAIM: Proceedings and Surveys \textbf{60}
  (2017), 1--26.

\bibitem{doob1953stochastic}
Joseph~L Doob, \emph{Stochastic processes}, Wiley, New York, 1953.

\bibitem{FR75}
Wendell~H. Fleming and Raymond~W. Rishel, \emph{Deterministic and stochastic
  optimal control}, Springer-Verlag, New York, 1975.

\bibitem{gallagher2019newtonnavierstokes}
Isabelle Gallagher, \emph{From {N}ewton to {N}avier-{S}tokes, or how to connect
  fluid mechanics equations from microscopic to macroscopic scales}, Bulletin
  of the American Math. Society \textbf{56} (2013), no.~1, 65--85.

\bibitem{gallagher2013newton}
Isabelle Gallagher, Laure Saint-Raymond, and Benjamin Texier, \emph{From
  {N}ewton to {B}oltzmann: hard spheres and short-range potentials}, European
  Mathematical Society, 2013.

\bibitem{ShuangPeterCDC17}
Shuang Gao and Peter~E. Caines, \emph{The control of arbitrary size networks of
  linear systems via graphon limits: An initial investigation}, Proc. 56th IEEE
  CDC (Melbourne, Australia), December 2017, pp.~1052--1057.

\bibitem{ShuangPeterNetSci17}
\bysame, \emph{Controlling complex networks of linear systems via graphon
  limits}, \textup{Presented at} the Symposium of Controlling Complex Networks
  of NetSci17, \textup{Indianapolis, IN, USA} (2017).

\bibitem{ShuangPeterSIAM17}
\bysame, \emph{Minimum energy control of arbitrary size networks of linear
  systems via graphon limits}, \textup{Presented at} the SIAM Workshop on
  Network Science, \textup{Pittsburgh, PA, USA} (2017).

\bibitem{ShuangPeterCDC18}
\bysame, \emph{Graphon linear quadratic regulation of large-scale networks of
  linear systems}, Proc. 57th IEEE Conference on Decision and Control (Miami
  Beach, FL, USA), December 2018, pp.~5892--5897.

\bibitem{GC20}
\bysame, \emph{Graphon control of large-scale networks of linear systems}, IEEE
  Transactions on Automatic Control \textbf{65} (2020), no.~10, 4090--4105.

\bibitem{G15}
Olivier Gu{\'e}ant, \emph{Existence and uniqueness result for mean field games
  with congestion effect on graphs}, Applied Mathematics \& Optimization
  \textbf{72} (2015), no.~2, 291--303.

\bibitem{herron2008partial}
Isom~H Herron and Michael~R Foster, \emph{Partial differential equations in
  fluid dynamics}, Cambridge University Press, 2008.

\bibitem{HCM07}
Minyi Huang, Peter~E Caines, and Roland~P Malham{\'e}, \emph{Large-population
  cost-coupled {LQG} problems with nonuniform agents: individual-mass behavior
  and decentralized $\varepsilon$-{N}ash equilibria}, IEEE Transactions on
  Automatic Control \textbf{52} (2007), no.~9, 1560--1571.

\bibitem{HCM10}
\bysame, \emph{The {NCE} (mean field) principle with locality dependent cost
  interactions}, IEEE Transactions on Automatic Control \textbf{55} (2010),
  no.~12, 2799--2805.

\bibitem{HMC06}
Minyi Huang, Roland~P Malham{\'e}, and Peter~E Caines, \emph{Large population
  stochastic dynamic games: closed-loop {M}c{K}ean-{V}lasov systems and the
  {N}ash certainty equivalence principle}, Communications in Information \&
  Systems \textbf{6} (2006), no.~3, 221--252.

\bibitem{kaliuzhnyi2017semilinear}
Dmitry Kaliuzhnyi-Verbovetskyi and Georgi~S Medvedev, \emph{The semilinear heat
  equation on sparse random graphs}, SIAM Journal on Mathematical Analysis
  \textbf{49} (2017), no.~2, 1333--1355.

\bibitem{karatzas2012brownian}
Ioannis Karatzas and Steven Shreve, \emph{Brownian motion and stochastic
  calculus}, vol. 113, Springer Science \& Business Media, 2012.

\bibitem{KY15}
Vassili Kolokoltsov and Wei Yang, \emph{Sensitivity analysis for {HJB}
  equations with an application to coupled backward-forward systems}, arXiv
  preprint arXiv:1303.6234v2 (2015).

\bibitem{Ladyzh68}
Olga~Aleksandrovna Ladyzhenskaya, NN~Ural'ceva, and VA~Solonnikov, \emph{Linear
  and quasi-linear equations of parabolic type}, American Mathematical Society,
  1968.

\bibitem{LL06a}
Jean-Michel Lasry and Pierre-Louis Lions, \emph{Jeux \'a champ moyen. {I} - le
  cas stationnaire}, Comptes Rendus Math{\'e}matique \textbf{343} (2006),
  no.~9, 619--625.

\bibitem{LL06b}
\bysame, \emph{Jeux \'a champ moyen. {II} horizon fini et controle optimal},
  Comptes Rendus Math{\'e}matique \textbf{343} (2006), no.~10, 679--684.

\bibitem{lovasz2012large}
L{\'a}szl{\'o} Lov{\'a}sz, \emph{Large networks and graph limits}, vol.~60,
  American Mathematical Soc., 2012.

\bibitem{lovasz2006limits}
L{\'a}szl{\'o} Lov{\'a}sz and Bal{\'a}zs Szegedy, \emph{Limits of dense graph
  sequences}, Journal of Combinatorial Theory, Series B \textbf{96} (2006),
  no.~6, 933--957.

\bibitem{medvedev2014nonlineardense}
Georgi~S Medvedev, \emph{The nonlinear heat equation on dense graphs and graph
  limits}, SIAM J. Math. Anal. \textbf{46} (2014), no.~4, 2743--2766.

\bibitem{medvedev2014nonlinear}
\bysame, \emph{The nonlinear heat equation on w-random graphs}, Archive for
  Rational Mechanics and Analysis \textbf{212} (2014), no.~3, 781--803.

\bibitem{N83}
I.~P. Natanson, \emph{Theory of functions of a real variable, vol. {I}}, F.
  Ungar Publishing Co., 1983, 5th printing.

\bibitem{PariseOzdagler2018}
Francesca Parise and Asuman Ozdaglar, \emph{Graphon games}, arXiv preprint
  arXiv:1802.00080 (2018).

\bibitem{pauli2000thermodynamics}
Wolfgang Pauli and Charles~P Enz, \emph{Thermodynamics and the kinetic theory
  of gases}, vol.~3, Courier Corporation, 2000.

\bibitem{QT15}
Cristobal Quininao and Jonathan Touboul, \emph{Limits and dynamics of randomly
  connected neuronal networks}, Acta Appl Math \textbf{136} (2015), 167--192.

\bibitem{SEC16}
Nevroz Sen and Peter~E Caines, \emph{Mean field game theory with a partially
  observed major agent}, SIAM Journal on Control and Optimization \textbf{54}
  (2016), no.~6, 3174--3224.

\bibitem{szn91}
Alain-Sol Sznitman, \emph{Topics in propagation of chaos}, Ecole d'{\'e}t{\'e}
  de probabilit{\'e}s de Saint-Flour XIX—1989, Springer, 1991, pp.~165--251.

\end{thebibliography}
%\begin{thebibliography}{99}

%\end{thebibliography}

\end{document}